\newtheorem{theorem}{Theorem}[section]
\newtheorem{lemma}[theorem]{Lemma}
\newtheorem{proposition}[theorem]{Proposition}
\newtheorem{corr}[theorem]{Corollary}
\newtheorem{conjecture}[theorem]{Conjecture}
\theoremstyle{definition}
\newtheorem{definitions and remarks}[theorem]{Definitions and Remarks}
\theoremstyle{remark}
\title{On representations of ${\rm GL}_{2n}(F)$ with a symplectic period}
\author{Arnab Mitra}
\numberwithin{equation}{section}
\begin{document}

\begin{abstract}
The main aim of this paper is to classify the irreducible admissible representations of ${\rm GL}_{4}(F)$ and ${\rm GL}_{6}(F)$ for a non-archimedean local field $F$, which bear a non-trivial linear form invariant under the groups ${\rm Sp_{2}}(F)$ and ${\rm Sp_{3}}(F)$ respectively. We propose a few conjectures for the general case.  
\end{abstract}
\maketitle
\section {Introduction}\setcounter{equation}{0}\label{Introduction}

Let $G={\rm GL}_{2n}(F)$ for $F$ a non-archimedean local field of characteristic 0 and $H$ a symplectic subgroup of $G$ of rank $n$.
A representation $\pi$ of $G$ is said to have a symplectic period (or to be $H$-distinguished) if ${\rm Hom}_{H}(\pi|_{H} ,\mathbb C)\neq 0$. This paper gives a complete list of irreducible admissible representations of ${\rm GL}_{4}(F)$ and ${\rm GL}_{6}(F)$ having a symplectic period. We also make a few conjectural statements for ${\rm GL}_{2n}(F)$ at the end.

The motivation for this problem comes from the work of Klyachko in the case of finite fields \cite{Klyachko}. He found a set of representations generalizing the Gelfand-Graev model after which Heumos and Rallis (in \cite{Heumos-Rallis}) studied the analogous notion in the $p$-adic case. Moreover, they proved multiplicity one theorems in the symplectic case. 

Continuing the works of Heumos and Rallis, Offen and Sayag proved in a series of papers (\cite{Offen-Sayag1}, \cite{Offen-Sayag2}, \cite{Offen-Sayag3}), the uniqueness property of the Klyachko models and multiplicity one results for irreducible admissible representations. They also showed the existence of the Klyachko model for unitary representations. To state the results precisely we need to introduce notation. 

Let $\delta$ be a square integrable representation of ${\rm GL}_{r}(F)$. Denote by $U(\delta,m)$ the unique irreducible quotient of the representation, $$\nu^{(m-1)/2}\delta \times \nu^{(m-3)/2}\delta \times\cdots\times\nu^{-(m-1)/2}\delta.$$ Then we have the following proposition due to Offen and Sayag.

\begin{proposition}[{\bf Offen, Sayag}, \cite{Offen-Sayag1}]\label{main prop}
Let $\delta_{i}$'s be square integrable representations of ${\rm GL}_{r_{i}}(F)$ and $m_{i}$'s be positive integers for $i=1,...,t$. Let $\chi_{i}$ be a character of ${\rm GL}_{2m_{i}r_{i}}(F)$. Then the representation, $$ \chi_{1}U(\delta_{1},2m_{1})\times\cdots\times\chi_{t}U(\delta_{t},2m_{t})$$ has a symplectic period.
\end{proposition}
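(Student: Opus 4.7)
My plan is to reduce the proposition to two independent assertions: (i) each twisted Speh block $\chi_i U(\delta_i, 2m_i)$ is ${\rm Sp}_{m_i r_i}(F)$-distinguished, and (ii) symplectic distinction is preserved under parabolic induction from a product of such symplectic-even blocks. Granting both, the full claim follows by induction on $t$.

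For step (ii), suppose $\pi_j$ is a representation of ${\rm GL}_{2a_j}(F)$ admitting an ${\rm Sp}_{a_j}(F)$-invariant linear form ($j=1,2$). I would analyze the double cosets $P \backslash G / H$, where $G = {\rm GL}_{2(a_1+a_2)}(F)$, $P$ is the standard parabolic with Levi ${\rm GL}_{2a_1} \times {\rm GL}_{2a_2}$, and $H = {\rm Sp}_{a_1+a_2}(F)$. By choosing the symplectic form on $F^{2(a_1+a_2)}$ so that a specific $H$-orbit on $P \backslash G$ has stabilizer whose Levi component is ${\rm Sp}_{a_1}(F) \times {\rm Sp}_{a_2}(F)$, with the unipotent radical acting trivially on the tensor product of the given invariant functionals, a Frobenius reciprocity / Mackey-type argument produces an $H$-invariant functional on $\pi_1 \times \pi_2$. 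Iterating yields distinction of the full external product $\chi_1 U(\delta_1,2m_1) \times \cdots \times \chi_t U(\delta_t,2m_t)$.

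For step (i), observe first that every character of ${\rm GL}_{2n}(F)$ factors through $\det$, and ${\rm Sp}_n(F) \subset \ker(\det)$, so the twist by $\chi_i$ is automatically trivial on the symplectic subgroup and may be discarded. It remains to show $U(\delta, 2m)$ is ${\rm Sp}_{mr}(F)$-distinguished. A natural attempt is to group the segment $\nu^{(2m-1)/2}\delta \times \cdots \times \nu^{-(2m-1)/2}\delta$ into $m$ consecutive pairs; each pair-induction $\nu^{j+1/2}\delta \times \nu^{-j-1/2}\delta$ on ${\rm GL}_{2r}(F)$ carries a symplectic period by the work of Heumos-Rallis, so $m$ applications of step (ii) produce an invariant functional on the full standard module. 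The principal obstacle, and the technical heart of the proof, is to show that this functional does not vanish on the unique irreducible (Langlands) quotient $U(\delta, 2m)$. For this I would carry out a Jacquet-module computation via the geometric lemma of Bernstein-Zelevinsky, combined with the orbit filtration of $\pi|_H$ refined in the Heumos-Rallis and Offen-Sayag framework, which identifies the contribution of each $P \backslash G / H$ orbit to ${\rm Hom}_H(\pi, \IC)$; this should let one control the kernel of the surjection onto $U(\delta, 2m)$, likely through a nested induction on $m$ and $r$.
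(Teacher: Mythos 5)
This proposition is quoted in the paper as a result of Offen and Sayag (\cite{Offen-Sayag1}); the paper itself gives no proof of it, and indeed repeatedly uses it as a black-box input (e.g.\ in the proof of Proposition~\ref{hereditary for Gl6}, in the existence part of Lemma~\ref{existence of symplectic period lemma}). So there is nothing in the paper to compare your argument against, and your proposal must be judged on its own.

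Judged that way, there is a genuine gap in step~(ii). The $H$-orbit on $G/P$ whose stabilizer has Levi ${\rm Sp}_{a_1}(F)\times{\rm Sp}_{a_2}(F)$ and trivial ${\rm GL}$-factor is the $r=0$ orbit in the notation of Section~\ref{Orbit structures and mackey theory}, i.e.\ the orbit of a non-degenerate $2a_1$-dimensional subspace; a short dimension count shows this is the \emph{open} orbit. In the Bernstein--Zelevinsky/Mackey filtration of $(\pi_1\times\pi_2)|_H$, functions supported on the open orbit form a \emph{sub}representation, not a quotient. Frobenius reciprocity therefore only tells you that this subrepresentation carries an $H$-invariant functional; it does not produce a functional on the whole of $\pi_1\times\pi_2$, since a functional on a subrepresentation need not extend. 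Note that every use of the orbit filtration in the paper goes in the opposite direction: it is used to bound ${\rm Hom}_H$ from above or to prove vanishing (as in Lemmas~\ref{juxt no symplectic period lemma} and~\ref{no symplectic lemma quotient}), never to directly manufacture a functional on the full induced module; when the paper needs an ``extension across orbits'' it must invoke an explicit ${\rm Ext}^1$-vanishing (Lemma~\ref{ext1}) together with a short exact sequence of $G$-modules, neither of which is available in your generality. Until you either supply the extension step (via an ${\rm Ext}$ argument, an open-orbit integral with a convergence/continuation proof, or a closed-orbit construction instead) or replace step~(ii) by a different mechanism, the hereditary claim is unproven. Step~(i) has the same issue compounded by the descent to the Langlands quotient, which you correctly flag as the technical heart but leave entirely unresolved; also the pairing you write, $\nu^{j+1/2}\delta\times\nu^{-j-1/2}\delta$, is not a pairing of consecutive exponents and for $j\geq 1$ yields an irreducible, generic, hence \emph{non}-distinguished product, so that part of the reduction does not even get off the ground as written.
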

Further define, $$\mathcal B=\{U(\delta,2m), \nu^{\alpha}U(\delta,2m)\times\nu^{-\alpha}U(\delta,2m)\}$$where $\delta$ varies over the discrete series representations and $\alpha\in\mathbb R$ such that $|\alpha|<1/2$. Then we have the following theorem which classifies unitary representations having a symplectic period.
\begin {theorem}[{\bf Offen, Sayag}, \cite{Offen-Sayag2}]\label{unitary symplectic}
Let $\pi=\tau_{1}\times\cdots\times\tau_{r}$ such that $\tau_{i}\in \mathcal B$. Then $\pi$ has a symplectic period. Conversely, if $\pi$ is an irreducible unitary representation with a symplectic period then there exist $\tau_{1},...,\tau_{r}\in \mathcal B$ such that $\pi=\tau_{1}\times\cdots\times\tau_{r}$.
\end{theorem}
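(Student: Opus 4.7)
The forward direction is a direct unwinding of Proposition~\ref{main prop}. The block $U(\delta,2m)\in\mathcal{B}$ falls under that proposition with $t=1$ and trivial character, while the Stein-complementary block $\nu^{\alpha}U(\delta,2m)\times\nu^{-\alpha}U(\delta,2m)\in\mathcal{B}$ falls under $t=2$ with $\delta_1=\delta_2=\delta$, $m_1=m_2=m$, $\chi_1=\nu^{\alpha}$ and $\chi_2=\nu^{-\alpha}$. A product $\tau_1\times\cdots\times\tau_r$ of elements of $\mathcal{B}$ is again of the form appearing in Proposition~\ref{main prop} after concatenating the characters, the discrete series, and the integers $m_i$, so this direction would follow by simply expanding the definitions.

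For the converse, the plan is to begin with Tadi\'c's classification of the irreducible unitary dual of $\mathrm{GL}_{n}(F)$: every irreducible unitary $\pi$ is a commutative product of building blocks of the shape $U(\delta,m)$ or $\nu^{\alpha}U(\delta,m)\times\nu^{-\alpha}U(\delta,m)$ with $|\alpha|<1/2$. The theorem is then equivalent to the statement that, whenever such a $\pi$ is $H$-distinguished, every $m$ occurring in this decomposition is even; the factors may then be regrouped into elements of $\mathcal{B}$ (for instance, a pair of identical odd-$m$ Speh pieces would not assemble into any element of $\mathcal{B}$, so it is really a parity obstruction that one is after).

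To establish the required parity, I would analyze $\mathrm{Hom}_{H}(\pi,\mathbb{C})$ through the geometric lemma of Bernstein-Zelevinsky, using the $H$-orbit stratification of $P\backslash G$ for a parabolic $P$ adapted to the Tadi\'c decomposition of $\pi$. Each orbit contributes a term expressible via Jacquet modules of the inducing Speh data, whose structure admits an explicit Zelevinsky-type description. Combining this with the multiplicity-one statement of \cite{Offen-Sayag1} to control the number of surviving orbits, together with the standard hereditary behaviour of symplectic periods under parabolic induction, one should be able to reduce the problem by induction on the total rank to a single base case: that a Speh representation $U(\delta,m)$ is $H$-distinguished if and only if $m$ is even. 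This base case is where I expect the main obstacle: it requires a delicate orbit-by-orbit vanishing argument tailored to $U(\delta,m)$, and is where the parity truly enters. Once this base case is settled, the combinatorial matching of building blocks in the Tadi\'c decomposition with elements of $\mathcal{B}$ should be a bookkeeping exercise handled by the same inductive framework.
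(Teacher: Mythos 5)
The first thing to note is that Theorem~\ref{unitary symplectic} is an imported result: the paper attributes it to Offen--Sayag \cite{Offen-Sayag2} and gives no proof of its own, so there is no in-paper argument to compare your proposal against. I will therefore only assess the proposal on its own terms.

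Your forward direction is fine and is essentially the same one-line reduction the paper implicitly relies on: each element of $\mathcal B$ is literally of the form appearing in Proposition~\ref{main prop} (with $t=1$, trivial twist, or $t=2$ and $\chi_{1}=\nu^{\alpha}$, $\chi_{2}=\nu^{-\alpha}$), and concatenating the data keeps you inside the hypotheses of that proposition. For the converse, the framing via Tadi\'c's classification is the right one, and you correctly identify that the theorem reduces to a parity statement on the Speh parameters in the Tadi\'c decomposition (if every $m$ is even, each Tadi\'c block is already an element of $\mathcal B$, so there is in fact no ``regrouping'' needed). However, the crucial step --- that a single Speh block $U(\delta,m)$ with $m$ odd never carries a symplectic period --- is exactly where you stop, and the orbit-by-orbit Mackey plan you gesture at has a structural difficulty you don't address: $U(\delta,m)$ is a Langlands \emph{quotient} of the standard module, and the Mackey stratification computes $\mathrm{Hom}_{H}(-,\mathbb C)$ for the full induced representation, not for a proper quotient of it. To extract non-distinction of the quotient one typically needs an additional control, such as an $\mathrm{Ext}^{1}$-vanishing argument (cf.\ Lemma~\ref{juxt no symplectic period lemma} in the paper, which uses Lemma~\ref{ext1} for precisely this purpose) or the disjointness/uniqueness of Klyachko models (Theorem~\ref{strong uniqueness}). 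Indeed, the route actually taken in \cite{Offen-Sayag2} is to determine the unique Klyachko type of \emph{every} irreducible unitary representation as an explicit function of its Tadi\'c data and then read off the symplectic case using the uniqueness theorem; that is a genuinely different, and substantially less delicate, mechanism than the bare geometric-lemma orbit count you propose. As written, your converse is a plausible outline with the central lemma declared but not proved, so I cannot call the proposal complete.
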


A natural question now is to classify all irreducible admissible representations which admit a symplectic model. For ${\rm GL}_{4}(F)$ we have the following result.
\begin{theorem}\label{Gl4 result}
Using the notation introduced for Proposition \ref{main prop}, an irreducible admissible representation of ${\rm GL}_{4}(F)$ with a symplectic period is a product of $\chi_{i}U(\delta_{i},2n_{i})$ where $\chi_{i}$'s are (not necessarily unitary) characters of $F^{\times}$.
\end{theorem}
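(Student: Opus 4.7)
The ``if'' direction follows at once from Proposition~\ref{main prop}. For the converse, the plan is to use the Langlands/Zelevinsky classification to write an arbitrary irreducible admissible $\pi$ of $\mathrm{GL}_{4}(F)$ as the Langlands quotient of a standard module $\rho_1 \times \cdots \times \rho_s$, where each $\rho_j$ is essentially square-integrable of some $\mathrm{GL}_{n_j}(F)$, and proceed by case analysis on the partition $(n_1,\ldots,n_s)$ of $4$, namely $(4)$, $(3,1)$, $(2,2)$, $(2,1,1)$, and $(1,1,1,1)$.

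The three principal tools I would use are: (a) the Heumos--Rallis vanishing, which asserts that no supercuspidal representation of $\mathrm{GL}_{2n}(F)$ carries a symplectic period; (b) disjointness results for Klyachko models, together with direct Jacquet-module computations, which rule out essentially square-integrable but non-supercuspidal representations such as $\chi\,\mathrm{St}_4$ and $\chi\,\mathrm{St}_2(\rho)$, these being Whittaker-generic rather than symplectic; and (c) a Mackey/geometric-lemma analysis of $\mathrm{Hom}_{\mathrm{Sp}_{2}(F)}(\mathrm{Ind}_P^G \sigma,\mathbb{C})$ via the double coset space $P\backslash \mathrm{GL}_{4}(F)/\mathrm{Sp}_{2}(F)$, giving the hereditary principle that a nonzero symplectic form on an induced representation must be supported on some orbit, which forces a distinction condition on $\sigma$ with respect to an explicit stabilizer in the Levi.

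Shape by shape: shape~$(4)$, where $\pi$ is essentially square-integrable, is eliminated by tools (a) and (b); shapes~$(3,1)$ and~$(2,1,1)$ carry an odd-rank Langlands block that cannot sustain the required symplectic-type distinction under the hereditary constraint and are thus ruled out; shape~$(2,2)$ reduces via heredity to the case where the Langlands datum is a ``Speh pair'' $(\nu^{1/2}\chi\delta,\nu^{-1/2}\chi\delta)$ with $\delta$ discrete series of $\mathrm{GL}_2(F)$, giving $\pi = \chi\,U(\delta,2)$; and shape~$(1,1,1,1)$, corresponding to an irreducible principal series, forces the four inducing characters to pair into Steinberg-type segments, yielding either $\chi\,U(\mathbf{1},4)$ (a character of $\mathrm{GL}_4(F)$) or the product form $\chi_1 U(\delta_1,2) \times \chi_2 U(\delta_2,2)$.

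The main technical obstacle will be the double-coset bookkeeping and the analysis of non-open orbits in $P \backslash \mathrm{GL}_4(F) / \mathrm{Sp}_2(F)$: the open orbit reproduces the ``expected'' distinction conditions and can be handled by reduction to lower rank, but closed or intermediate orbits could \emph{a priori} contribute exceptional invariant forms to Langlands quotients outside the claimed list. Ruling out shape~$(3,1)$ is particularly delicate, since one must verify that no orbit, open or closed, supports a symplectic form on the Langlands quotient of $\nu^a \rho \times \nu^b \chi$ for $\rho$ essentially square-integrable of rank~$3$. I expect the bulk of the technical work to lie in this non-contribution verification and in the precise identification, via Jacquet-module computations, of which irreducible principal series in shape~$(1,1,1,1)$ are distinguished.
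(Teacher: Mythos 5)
Your plan runs through the same circle of ideas the paper uses — Zelevinsky/Langlands classification, the Mackey/geometric‑lemma analysis of $\mathrm{Hom}_{\mathrm{Sp}_2(F)}(\mathrm{Ind}_P^G\sigma,\mathbb C)$, disjointness of Klyachko models to dispose of generic (in particular essentially square‑integrable and supercuspidal) representations, and Proposition \ref{main prop} for the ``if'' direction. The organizational route differs: you propose to sort cases by the shape $(n_1,\ldots,n_s)$ of the Langlands standard module, whereas the paper first proves (Lemma \ref{initial reduction lemma Gl4}) that any distinguished $\theta$ is a quotient of some $\pi_1\times\pi_2$ with $\pi_i\in\mathrm{Irr}(\mathrm{GL}_2(F))$, and then runs Mackey theory once and for all on the $(2,2)$ parabolic. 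Your organization is viable but requires separate orbit bookkeeping on the $(3,1)$, $(2,1,1)$ and $(1,1,1,1)$ parabolics; the paper's reduction collapses all of these onto the single $(2,2)$ situation, which is why Proposition \ref{analysis Gl4 proposition} is so short.

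There is, however, a genuine gap in the hard direction of your shape $(1,1,1,1)$ analysis, and it is exactly where the paper's effort is concentrated. Saying the four characters ``pair into Steinberg‑type segments'' quietly assumes that certain non‑generic Langlands quotients in the same cuspidal line are \emph{not} distinguished, and the most delicate of these is the \emph{juxtaposed} case, e.g. $\theta=Z([\nu^{-5/2},\nu^{-3/2}],[\nu^{-1/2},\nu^{1/2}])$. Mackey theory and Jacquet‑module computations cannot kill this case by themselves: the full module $Z(\Delta_2)\times Z(\Delta_1)$ from which $\theta$ is a submodule is a product of characters of $\mathrm{GL}_2(F)$ and hence \emph{is} $\mathrm{Sp}_2(F)$‑distinguished by Proposition \ref{main prop}, so the orbit analysis only tells you the space of invariant forms on the induced module is one‑dimensional. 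To conclude that this one form factors through the quotient $\mathbb C=Z(\Delta_1\cup\Delta_2)$ rather than restricting non‑trivially to $\theta$, the paper invokes a long exact sequence in $\mathrm{Ext}_{\mathrm{Sp}_n(F)}$ together with the vanishing $\mathrm{Ext}^1_{\mathrm{Sp}_n(F)}(\mathbb C,\mathbb C)=0$ (Lemma \ref{ext1} feeding into Lemma \ref{juxt no symplectic period lemma}). This homological step is not in your toolkit (a), (b), (c), and without it the shape $(1,1,1,1)$ case cannot be closed; the same issue would reappear if you tried to carry out the analogous analysis on a $(2,2)$ Langlands datum with linked but non‑overlapping segments. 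You should add an $\mathrm{Ext}^1$‑vanishing lemma (or an equivalent device) to your arsenal and record explicitly which Langlands quotients with multi‑segment support on a single cuspidal line are being ruled out.
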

Next we state the corresponding theorem for ${\rm GL}_{6}(F)$.
\begin{theorem}\label{main theorem}
Using the notation introduced for Proposition \ref{main prop}, an irreducible admissible representation of ${\rm GL}_{6}(F)$ with a symplectic period is either a product of $\chi_{i}U(\delta_{i},2n_{i})$ ($\chi_{i}$'s are not necessarily unitary) or is a twist of $Z([1,\nu],[\nu,\nu^{4}])$, or its dual.
\end{theorem}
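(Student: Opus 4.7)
The plan is to classify all irreducible admissible $\pi$ of $\mathrm{GL}_{6}(F)$ admitting a symplectic period by combining the Bernstein--Zelevinsky classification with geometric-lemma techniques for the pair $(\mathrm{GL}_{6}(F), \mathrm{Sp}_{3}(F))$. Since every irreducible admissible $\pi$ corresponds to a multisegment of total length $6$, I would first enumerate the possible partition types $(6),(5,1),(4,2),(4,1,1),(3,3),(3,2,1),(3,1,1,1),(2,2,2),(2,2,1,1),(2,1,1,1,1),(1^{6})$, and within each type the continuous parameters (cuspidal data and relative exponents). For each candidate I then decide whether $\pi$ admits a non-trivial $\mathrm{Sp}_{3}(F)$-invariant form.

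For the existence side, any representation of the form $\chi_{1}U(\delta_{1},2n_{1}) \times \cdots \times \chi_{t}U(\delta_{t},2n_{t})$ is already distinguished by Proposition \ref{main prop}. The remaining task is to produce the symplectic period on $Z([1,\nu],[\nu,\nu^{4}])$ and on its dual. For this I would try to realise each as a subquotient of a standard module to which Proposition \ref{main prop} applies, exploiting that the cuspidal support $\{1,\nu,\nu,\nu^{2},\nu^{3},\nu^{4}\}$ can be regrouped into segments symmetric (up to twist) around a single character; failing a clean embedding, I would obtain the linear form as a residue of a meromorphic family of $\mathrm{Sp}_{3}$-invariant forms defined on generic members of the inducing family, taking the residue precisely along the hyperplane on which $[1,\nu]$ and $[\nu,\nu^{4}]$ become linked.

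For the converse, assuming $\pi$ is distinguished, I would apply the version of the Bernstein--Zelevinsky geometric lemma adapted to the symmetric pair $(\mathrm{GL}_{6},\mathrm{Sp}_{3})$: for each parabolic $P \subset \mathrm{GL}_{6}$, the $\mathrm{Sp}_{3}(F)$-orbits on $P \backslash \mathrm{GL}_{6}(F)$ yield a filtration on $\mathrm{Hom}_{\mathrm{Sp}_{3}}(\pi,\mathbb{C})$ whose successive quotients are $\mathrm{Hom}$-spaces for certain (symplectic and mirabolic-type) subgroups of the Levi acting on the Jacquet module $r_{P}(\pi)$. Running this analysis for the Siegel parabolic and for the parabolics of types $(4,2),(3,3),(2,2,2)$ in turn, and invoking Offen--Sayag multiplicity one together with Theorem \ref{unitary symplectic} on the unitary part, should exclude every Zelevinsky datum outside the claimed list.

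The hard part will be the analysis of the two exceptional representations. Because $[1,\nu]$ and $[\nu,\nu^{4}]$ are linked segments sharing the character $\nu$, the induced representation $Z([1,\nu]) \times Z([\nu,\nu^{4}])$ is reducible and only one of its two irreducible constituents can inherit the symplectic period; identifying the correct constituent, constructing the invariant form on it, and ruling out its sibling will force a delicate Jacquet-module computation along the $(2,4)$ and $(4,2)$ parabolics, keeping track of which composition factor supports the functional. A secondary book-keeping difficulty is the proliferation of non-unitary twists within the richest partition types such as $(3,2,1)$ and $(2,2,2)$, where reducibility patterns are most intricate; ensuring that none of these non-Klyachko configurations escapes the geometric-lemma argument will likely absorb most of the technical effort.
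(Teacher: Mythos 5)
Your plan identifies the right tools --- Zelevinsky classification, Mackey theory for the $\mathrm{Sp}_3$-orbits, the Offen--Sayag results --- and correctly pinpoints where the effort lies, but it diverges from the paper at two structural points that are not cosmetic.

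First, the paper does not run a geometric-lemma analysis separately for each parabolic type. Its decisive organizing step is Lemma~\ref{initial reduction lemma}: every distinguished $\theta$, apart from the isolated family $Z([\sigma_3,\nu\sigma_3])$ with $\sigma_3$ cuspidal of $\mathrm{GL}_3(F)$, is realized as a subquotient of $\pi_1\times\pi_2\times\pi_3$ with each $\pi_i\in\mathrm{Irr}(\mathrm{GL}_2(F))$. Everything is thus funneled through the $(2,2,2)$ parabolic alone, after which Lemmas~\ref{no cuspidal lemma} and~\ref{no two irrd ps} (the latter resting on Lemma~\ref{no disjoint linked}) strip away the cuspidal and the irreducible-principal-series $\pi_i$, leaving a finite explicit list of products on a single cuspidal line. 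Enumerating all eleven partition types of $6$ and applying the orbit filtration to each, as you propose, would re-derive the same information many times over; the $(3,2,1)$, $(2,2,1,1)$ and similar types never appear as separate cases in the paper.

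Second, and more seriously, the residue-of-a-meromorphic-family construction of the invariant form on $Z([1,\nu],[\nu,\nu^4])$ is only a heuristic, and the paper uses nothing of the kind. The actual mechanism is Lemma~\ref{existence of symplectic period lemma}: for two even-length segments with odd-length intersection, one embeds $\theta=Z(\Delta_1,\Delta_2)$ into
$$0\longrightarrow\theta\longrightarrow Z(\Delta_2)\times Z(\Delta_1)\longrightarrow Z(\Delta_1\cup\Delta_2)\times Z(\Delta_1\cap\Delta_2)\longrightarrow 0.$$
The middle term is a product of twisted $U(\delta,2m)$'s and has a symplectic period by Proposition~\ref{main prop}; the quotient involves only odd-length segments, hence has a mixed Klyachko model (Theorem 3.7 of \cite{Offen-Sayag2}) and cannot have a symplectic period by Theorem~\ref{strong uniqueness}; left-exactness of $\mathrm{Hom}_H(-,\mathbb{C})$ then forces the functional onto $\theta$. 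Your instinct that a ``delicate Jacquet-module computation along the $(2,4)$ and $(4,2)$ parabolics'' settles the exceptional case is misplaced --- the Jacquet-module calculations via eqn.~(\ref{general case eqn}), together with the $\mathrm{Ext}^1_{\mathrm{Sp}_n(F)}(\mathbb{C},\mathbb{C})=0$ observation of Lemma~\ref{ext1}, are deployed on the \emph{negative} side, to rule out symplectic periods for the many competing subquotients (as in Lemma~\ref{juxt no symplectic period lemma} and throughout the case analysis). Without the reduction to the $(2,2,2)$ parabolic and a concrete replacement for the residue heuristic, the plan as written does not yet amount to a proof.
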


A few words about the proofs. It is a consequence of the uniqueness of the Klyachko models that irreducible cuspidal representations (which are generic) cannot have a symplectic period. Since any non-supercuspidal irreducible representation is a quotient of a representation of the form ${\rm ind}^{{\rm GL}_{2n}}_{P_{k,2n-k}} (\rho \otimes \tau) ,\rho \in Irr ({\rm GL}_{k}(F)) , \tau \in Irr ({\rm GL}_{2n-k}(F))$ it is enough to study the problem for representations of these types. For ${\rm GL}_{4}(F)$ and ${\rm GL}_{6}(F)$, this reduces the problem to the analysis of representations of the type $\pi_{1}\times\pi_{2}$ and $\pi_{1}\times\pi_{2}\times\pi_{3}$ where each $\pi_{i}$ is an irreducible representation of ${\rm GL}_{2}(F)$. For the ${\rm GL}_{4}(F)$ case, using Mackey theory we obtain an exhaustive list of representations (not necessarily irreducible). Then we study every possible quotient to obtain a complete list of irreducible ${\rm Sp}_{2}(F)$-distinguished representations of ${\rm GL}_{4}(F)$. In the ${\rm GL}_{6}(F)$ case, we first reduce the problem to the case when none of the $\pi_{i}$'s are cuspidal. Next we reduce the problem to the case when at most one of the $\pi_{i}$'s is an irreducible principal series. Then we do a case-by-case analysis (for each $\pi_{i}$ to be one of the three types of irreducible representations of ${\rm GL}_{2}(F)$ - a character, an irreducible principal series or a twist of the Steinberg with at most one of them an irreducible principal series), analyzing all possible subquotients for symplectic periods. A common way of showing that an irreducible subquotient is not $H$-distinguished, specially in the ${\rm GL}_{6}(F)$ case, is to express it as a quotient of a representation, which is then shown not to have a symplectic period using Mackey theory. 

A word on the organization of the paper. Section \ref{Notation and Preliminaries} consists of the notation and preliminary notions which has been used in the paper. The orbit structures and Mackey theory is done in detail in Section \ref{Orbit structures and mackey theory}. We analyze the representations of the form $\pi_{1}\times\pi_{2}$ and obtain the theorem for ${\rm GL}_{4}(F)$ in Section \ref{analysis Gl4}. In Section \ref{Analysis of}, we analyze the representations of the form $\pi_{1}\times\pi_{2}\times\pi_{3}$, collecting all the irreducible ${\rm Sp}_{3}(F)$-distinguished subquotients. Using the analysis, we obtain the theorem for ${\rm GL}_{6}(F)$. In Section \ref{conjectures} we make a few conjectures for the general case based on the available examples.

{\bf Acknowledgements:} This work is a part of author's thesis. He wishes to thank his advisor Dipendra Prasad for suggesting the problem studied in this work and guidance. Further, he wishes to thank Omer Offen, Eitan Sayag and Steven Spallone for their interest and suggestions to improve the paper. 

\section {Notation and Preliminaries} \setcounter{equation}{0}\label{Notation and Preliminaries}
\subsection{Notation}\label{Notation}

All along in this paper, the field $F$ will denote a non-archimedean local field of characteristic 0. 

Following the notation of \cite {BZ1}, we denote the set of all smooth representations of an $l$-group $G$ by $Alg (G)$ and the subset of all irreducible admissible representations by $Irr (G)$. If $\pi \in Alg (G)$, we denote by $\tilde{\pi}$, its contragredient.  

Any character of ${\rm GL}_{n}(F)$ can be thought of as a character of $F^{\times}$ via the determinant map. Given a character $\chi$ of $F^{\times}$ and a smooth representation $\pi$ of ${\rm GL}_{n}(F)$ we will denote the twist of $\pi$ by $\chi$ simply by $\chi \pi$, $\chi \pi (g):= \chi(\det (g))\pi(g)$. Unless otherwise mentioned, $St_{n}$ and $1_{n}$ will be used to denote the Steinberg and the trivial character of ${\rm GL}_{n}(F)$ respectively. The norm character $\nu(g):=|\det(g)|$ will be denoted by $\nu$. 

Let $P_{n_{1} ,\cdots,n_{r}}$ be the group of block upper triangular matrices corresponding to the tuple $(n_{1} ,...,n_{r})$. Let $N_{n_{1} ,\cdots,n_{r}}$ denote its unipotent radical. Let $\delta_{P_{n_{1} ,...,n_{r}}}$ denote the modular function of the group $P_{n_{1} ,\cdots,n_{r}}$. Since a parabolic normalizes its unipotent radical, this defines a character of $P_{n_{1} ,\cdots,n_{r}}$ (the module of the automorphism  $n\to pnp^{-1}$ of $N_{n_{1} ,\cdots,n_{r}}$ for $p \in P_{n_{1} ,\cdots,n_{r}}$). Call this character $\delta_{N_{n_{1} ,...,n_{r}}}$. Then we have $\delta_{N_{n_{1} ,...,n_{r}}}=\delta_{P_{n_{1} ,...,n_{r}}}$. For an element $p \in P_{n_{1} ,\cdots,n_{r}}$, with its Levi part equal to $\textrm{diag}(g_{1},...,g_{r})$, 
\begin{align} \label{modular}
\delta_{P_{n_{1} ,...,n_{r}}}(p)=& |\det(g_{1})|^{n_{2}+\cdots+n_{r}}|\det(g_{2})|^{-n_{1}+n_{3}+\cdots+n_{r}}\cdots|\det(g_{r})|^{-n_{1}\cdots-n_{r-1}}. 
\end{align}

The induced representation of $(\sigma,H,W)\in Alg (H)$ to $G$ is the following space of locally constant functions 
$$\textrm{Ind}^{G}_{H}\sigma=\{f:G\to W\ |\ f(hg)=\delta_{H}^{1/2}\delta_{G}^{-1/2}\sigma(h)f(g),\ \forall h\in H,g\in G\}$$
where $\delta_{G}$ and $\delta_{H}$ are the modular functions of $G$ and $H$ respectively. $G$ acts on the space by right action. Compact induction from $H$ to $G$ is denoted by ${\rm ind}^{G}_{H}\sigma$ and is the subspace of $\textrm{Ind}^{G}_{H}\sigma$ consisting of functions compactly supported mod $H$. Occasionally we will use non-normalized induction (see Remark 2.22 of \cite {BZ1} for the definition), although unless otherwise mentioned induction is always normalized. Given representations $\rho_{i} \in Irr ({\rm GL}_{n_{i}}(F) )(i=1,...,r)$, extend $\rho_{1} \otimes \cdots\otimes \rho_{r}$ to $P_{n_{1},...,n_{r}}$ so that it is trivial on $N_{n_{1},...,n_{r}}$. We denote by $\rho_{1} \times \cdots\times \rho_{r}$ the representation ${\rm ind}^{{\rm GL}_{n}}_{P_{n_{1},...,n_{r}}}\ \rho_{1} \otimes \cdots\otimes \rho_{r}$.

The Jacquet functor with respect to a unipotent subgroup $N$ is denoted by $r_{N}$ and is always normalized. 

If $\pi\in Irr(\textrm{GL}_{n}(F))$, then there exists a partition of $n$ and a multiset of cuspidal representations $\{\rho_{1},\cdots,\rho_{r}\}$ corresponding to it such that $\pi$ can be embedded in $\rho_{1}\times \cdots\times \rho_{r}$. This multiset is uniquely determined by $\pi$ and called its cuspidal support. For the purpose of this paper, for a smooth representation of finite length define it to be the union (as a set) of all the supports of its irreducible subquotients. 

\subsection{Preliminaries on segments}\label{Preliminaries on segments}
We briefly recall the notation and the basic definition of ``segments'' as introduced by Zelevinsky (in \cite{Z}). Given a cuspidal representation $\rho$ of ${\rm GL}_{m}(F)$ define a segment to be a set $\Delta$ of the form $\{ \rho, \rho\nu,...,\rho\nu^{k-1} \}(k>0)$. Given a segment $\Delta=\{ \rho, \rho\nu,...,\rho\nu^{k-1} \}$ (also denoted by $[\rho , \rho\nu^{k-1}]$), the unique irreducible submodule and the unique irreducible quotient of $\rho \times \cdots\times \rho \nu^{k-1}$ is denoted by $Z(\Delta)$ and $Q(\Delta)$ respectively. 

For $\Delta_{1}=[\rho_{1},\nu^{k_{1}-1}\rho_{1}], \Delta_{2}=[\rho_{2},\nu^{k_{2}-1}\rho_{2}]$, we say that $\Delta_{1}$ and $\Delta_{2}$ are linked if $\Delta_{1}\nsubseteq \Delta_{2}$, $\Delta_{2}\nsubseteq \Delta_{1}$ and $\Delta_{1}\cup \Delta_{2}$ is also a segment. If $\Delta_{1}$ and $\Delta_{2}$ are linked and $\Delta_{1}\cap \Delta_{2}=\phi$, then we say that $\Delta_{1}$ and $\Delta_{2}$ are juxtaposed. If $\Delta_{1}$ and $\Delta_{2}$ are linked and $\rho_{2}=\nu^{k}\rho_{1}$ where $k>0$ then we say that $\Delta_{1}$ precedes $\Delta_{2}$. Given a multiset $\mathfrak a = \{ \Delta_{1},...,\Delta_{r}\}$ of segments, let $$\pi(\mathfrak a):= Z(\Delta_{1}) \times\cdots \times Z(\Delta_{r}).$$ If $\Delta_{i}$ does not precede $\Delta_{j}$ for any $i<j$, $\pi (\mathfrak a)$ is known to have a unique irreducible submodule which will be denoted by $Z(\Delta_{1},...,\Delta_{r})$. By Theorem 6.1 of \cite{Z}, this submodule is independent of the ordering of the segments as long as the ``does not precede'' condition is satisfied. Hence we simply denote it by $Z(\mathfrak a)$. In this situation, a similar statement holds for quotients as well and the unique irreducible quotient of $Q(\Delta_{1}) \times\cdots \times Q(\Delta_{r})$ is denoted by $Q(\mathfrak a)$. For e.g., the trivial character $1_{n}$ of ${\rm GL}_{n}(F)$ is $Z([\nu^{-(n-1)/2},\nu^{(n-1)/2}])$ while $St_{n}$ is $Q([\nu^{-(n-1)/2},\nu^{(n-1)/2}])$.  

We say a multiset $\mathfrak a=\{ \Delta_{1},...,\Delta_{r}\}$ is on the cuspidal line of $\rho$, where $\rho$ is a cuspidal representation of some ${\rm GL}_{n}(F)$, if $\Delta_{i}\subset \{\nu^{k}\rho\}_{k\in \mathbb Z},\ \forall i$.

\subsection{Preliminaries on ${\rm GL}_{n}(F)$ and symplectic periods}\label{Preliminaries on general}
In this subsection we collect a few basic results on ${\rm GL}_{n}(F)$ and symplectic periods which are used in the sequel.
The following result is used to calculate explicitly the quotients and the submodules in quite a few cases in the proofs of the main theorems.
\begin{theorem}[{\bf Zelevinsky, \cite {Z}}]\label{product of sub}
Let $\Delta_{1}$ and $\Delta_{2}$ be two segments. If $\Delta_{1}$ and $\Delta_{2}$ are linked, put $\Delta_{3}=\Delta_{1} \cup \Delta_{2}$ and  $\Delta_{4}=\Delta_{1} \cap \Delta_{2}$ . The representation $\pi = Z(\Delta_{1}) \times Z(\Delta_{2})$ is irreducible if and only if  $\Delta_{1}$ and $\Delta_{2}$ are not linked. If  $\Delta_{1}$ and $\Delta_{2}$ are linked then $\pi$ has length 2. If $\Delta_{2}$ precedes $\Delta_{1}$ then $\pi$ has unique irreducible submodule $Z(\Delta_{1},\Delta_{2})$ and unique irreducible quotient $Z(\Delta_{3}) \times Z(\Delta_{4})$. If $\Delta_{1}$ precedes $\Delta_{2}$ then $\pi$ has unique irreducible submodule $Z(\Delta_{3}) \times Z(\Delta_{4})$ and unique irreducible quotient $Z(\Delta_{1},\Delta_{2})$.
\end {theorem}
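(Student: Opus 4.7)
The plan is to reduce the claim, via the standard Bernstein--Zelevinsky classification machinery, to a combinatorial analysis of Jacquet modules. First I would observe that if the cuspidal supports of $\Delta_{1}$ and $\Delta_{2}$ lie on different cuspidal lines (i.e.\ $\rho_{2} \notin \{\nu^{k}\rho_{1}: k\in \IZ\}$), then the induced representation $Z(\Delta_{1}) \times Z(\Delta_{2})$ is automatically irreducible: any irreducible subquotient has the same cuspidal support, and a direct application of the geometrical lemma shows that no non-trivial filtration can appear because the two factors live in ``independent'' Bernstein blocks. This reduces the problem to the case where both segments sit on a common cuspidal line of some cuspidal $\rho$, so that the problem becomes essentially combinatorial.

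On a fixed cuspidal line, I would induct on the total length $k_{1}+k_{2}$. The key tool is the geometrical lemma of \cite{BZ1}, which gives a filtration of $r_{N}(Z(\Delta_{1}) \times Z(\Delta_{2}))$ with respect to a maximal parabolic in terms of tensor products of ``sub-segment'' Jacquet modules of the factors. Since each $r_{N}Z(\Delta)$ decomposes explicitly as a sum over ``cuts'' $\Delta=\Delta'\cup \Delta''$ of $Z(\Delta')\otimes Z(\Delta'')$, one obtains a complete combinatorial description of the possible irreducible subquotients of $Z(\Delta_{1}) \times Z(\Delta_{2})$. Comparing these Jacquet module calculations in the unlinked case shows that every contribution is multiplicity-free and is already accounted for by a single irreducible representation with the required cuspidal support, forcing irreducibility. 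In the linked case, exactly two distinct candidates for irreducible summands appear in the Jacquet module; combined with the inductive hypothesis applied to shortened segments, this identifies the composition series of $\pi$ as having length two.

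To pin down the precise submodule and quotient, I would invoke Frobenius reciprocity together with the definition of $Z(\Delta_{1},\Delta_{2})$ as the unique irreducible subrepresentation of $Z(\Delta_{1}) \times Z(\Delta_{2})$ whenever the ordering satisfies the ``does not precede'' condition. Thus, when $\Delta_{2}$ precedes $\Delta_{1}$ the pair $(\Delta_{1},\Delta_{2})$ is in the admissible order, so $Z(\Delta_{1},\Delta_{2})$ is the submodule and the quotient is forced to be the remaining composition factor. To identify this quotient as $Z(\Delta_{3})\times Z(\Delta_{4})$ where $\Delta_{3}=\Delta_{1}\cup\Delta_{2}$ and $\Delta_{4}=\Delta_{1}\cap\Delta_{2}$, I would note that since $\Delta_{3}$ and $\Delta_{4}$ are unlinked the product $Z(\Delta_{3}) \times Z(\Delta_{4})$ is irreducible by the previous step, and a cuspidal-support and Jacquet-module comparison shows it has the same class in the Grothendieck group as the remaining constituent. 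The opposite case (when $\Delta_{1}$ precedes $\Delta_{2}$) follows by a symmetric argument, or by passing to the contragredient.

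The main obstacle will be the combinatorial bookkeeping in the geometrical lemma step: one must carefully verify that the Jacquet module contributions from all ``cuts'' of $\Delta_{1}$ and $\Delta_{2}$ produce exactly the expected irreducible pieces in each of the two regimes, and that the linked/unlinked dichotomy is precisely what distinguishes length one from length two. A clean handling of the auxiliary induction --- peeling off $\nu^{k_{1}-1}\rho_{1}$ or $\rho_{2}$ from an end of one of the segments to decrease $k_{1}+k_{2}$ --- is what allows the induction to close, and getting that recursion aligned with the $Z(-)$ versus $Q(-)$ conventions is the most delicate point.
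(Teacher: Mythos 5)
This paper does not give a proof of this theorem: it is stated as a direct citation to Zelevinsky \cite{Z} (it corresponds to Zelevinsky's Theorem 4.2 together with Theorem 6.1 and the surrounding discussion), and is then used as a black box throughout. So there is no ``paper's own proof'' for you to be measured against; what you have written is an outline of how one would reconstruct Zelevinsky's original argument.

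As such an outline, your plan follows the genuine strategy of \cite{Z}: reduce to a common cuspidal line, use the geometrical lemma to filter Jacquet modules of $Z(\Delta_1)\times Z(\Delta_2)$, and induct on total segment length. Two points are worth tightening. First, your description of $r_N Z(\Delta)$ as a ``sum over cuts'' is slightly off: for each maximal parabolic there is a single compatible cut and hence a single tensor factor $Z(\Delta')\otimes Z(\Delta'')$ (or zero), not a sum; the combinatorics of the geometrical lemma produces the sum, not the Jacquet module of an individual factor. Second, in the unlinked case, ``multiplicity-free Jacquet modules force irreducibility'' is not the right mechanism by itself; Zelevinsky's argument isolates a specific extremal term of the Jacquet module that must occur in every nonzero submodule, and irreducibility is deduced from that, not from multiplicity-freeness alone. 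Finally, be aware that invoking ``$Z(\Delta_1,\Delta_2)$ is by definition the unique irreducible submodule'' silently imports Zelevinsky's Theorem 6.1 (existence and uniqueness of that submodule under the ``does not precede'' ordering); that is legitimate if you treat it as an already-established input, but it is logically a companion result proved by the same Jacquet-module machinery, so your induction should be phrased to make clear which of the two statements you assume at each stage. With those caveats, the route you propose is the correct one and would close, at the cost of substantial combinatorial bookkeeping you have only gestured at.
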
 
Using Zelevinsky involution and Rodier's theorem that $Q(\Delta_{1},\Delta_{2})$ is taken to $Z(\Delta_{1},\Delta_{2})$ we have a quotient version of this lemma.
\begin{theorem}\label{product of quotient}
Let $\Delta_{1}$ and $\Delta_{2}$ be two segments. If $\Delta_{1}$ and $\Delta_{2}$ are linked, put $\Delta_{3}=\Delta_{1} \cup \Delta_{2}$ and  $\Delta_{4}=\Delta_{1} \cap \Delta_{2}$ . The representation $\pi = Q(\Delta_{1}) \times Q(\Delta_{2})$ is irreducible if and only if  $\Delta_{1}$ and $\Delta_{2}$ are not linked. If  $\Delta_{1}$ and $\Delta_{2}$ are linked then $\pi$ has length 2. If $\Delta_{2}$ precedes $\Delta_{1}$ then $\pi$ has the unique irreducible submodule $Q(\Delta_{3}) \times Q(\Delta_{4})$. If $\Delta_{1}$ precedes $\Delta_{2}$ then $\pi$ has the unique irreducible quotient $Q(\Delta_{3}) \times Q(\Delta_{4})$.
\end {theorem}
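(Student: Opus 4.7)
The strategy, as signalled by the statement itself, is to deduce this quotient version from Theorem \ref{product of sub} via the Zelevinsky--Rodier duality. Let $\iota$ denote the Zelevinsky involution: a ring automorphism of the Grothendieck ring $\mathcal{R} = \bigoplus_{n} R(\mathrm{GL}_{n}(F))$ (with parabolic induction as multiplication) which preserves irreducibility and, by Rodier's theorem, satisfies $\iota[Z(\mathfrak{a})] = [Q(\mathfrak{a})]$ for every multiset of segments $\mathfrak{a}$.

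\emph{Step 1: composition factors.} Apply $\iota$ to the identity $[Z(\Delta_1)\times Z(\Delta_2)] = [Z(\Delta_1,\Delta_2)] + [Z(\Delta_3)\times Z(\Delta_4)]$ furnished by Theorem \ref{product of sub} in the linked case. Using that $\iota$ is multiplicative and intertwines $Z$ with $Q$, this produces in the Grothendieck group
\[ [Q(\Delta_1)\times Q(\Delta_2)] = [Q(\Delta_1,\Delta_2)] + [Q(\Delta_3)\times Q(\Delta_4)]. \]
The non-linked case is even simpler: $Z(\Delta_1)\times Z(\Delta_2)$ is then irreducible, and $\iota$ takes irreducibles to irreducibles. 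Applied to the pair $(\Delta_3,\Delta_4)$, which is itself not linked because $\Delta_4 \subset \Delta_3$, this also gives irreducibility of $Q(\Delta_3)\times Q(\Delta_4)$. Hence in the linked case $Q(\Delta_1)\times Q(\Delta_2)$ has length exactly~$2$ with the two irreducible composition factors indicated.

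\emph{Step 2: socle/cosocle.} This is the main obstacle, since $\iota$ lives only at the level of the Grothendieck ring and carries no a priori information about sub/quotient structure. Assuming $\Delta_1$ precedes $\Delta_2$ (the other case being symmetric), it suffices by the length-$2$ structure of Step~1 to exhibit \emph{any} nonzero morphism $Q(\Delta_1)\times Q(\Delta_2)\to Q(\Delta_3)\times Q(\Delta_4)$; such a morphism is automatically surjective with kernel equal to the complementary factor $Q(\Delta_1,\Delta_2)$. By Frobenius reciprocity this reduces to producing a nonzero $P_{|\Delta_3|,|\Delta_4|}$-equivariant map
\[ r_{N_{|\Delta_3|,|\Delta_4|}}\bigl(Q(\Delta_1)\times Q(\Delta_2)\bigr) \longrightarrow Q(\Delta_3)\otimes Q(\Delta_4). \]
The left-hand side is accessible through the Bernstein--Zelevinsky geometric lemma applied to the inducing parabolic $P_{|\Delta_1|,|\Delta_2|}$; the precedence hypothesis singles out a particular double coset whose contribution to the resulting filtration contains $Q(\Delta_3)\otimes Q(\Delta_4)$ as a subquotient, from which the required map is extracted. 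The complementary case ($\Delta_2$ precedes $\Delta_1$) is then obtained by swapping the roles of the two segments, which interchanges submodule and quotient in the statement.
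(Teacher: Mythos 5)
Your Step 1 is exactly the paper's stated argument: the Zelevinsky involution, together with Rodier's theorem $\iota[Z(\mathfrak{a})]=[Q(\mathfrak{a})]$, converts the Grothendieck-group identity from Theorem \ref{product of sub} into $[Q(\Delta_1)\times Q(\Delta_2)]=[Q(\Delta_1,\Delta_2)]+[Q(\Delta_3)\times Q(\Delta_4)]$ in the linked case, and establishes irreducibility in the non-linked case. Your observation that this step pins down only the composition factors and the length — not the socle or cosocle, since $\iota$ is a ring automorphism of $\mathcal{R}$ and not a functor — is a genuine point that the paper's one-sentence justification glosses over.

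However, Step 2 as written has a gap. You claim it suffices to produce a single nonzero map $Q(\Delta_1)\times Q(\Delta_2)\to Q(\Delta_3)\times Q(\Delta_4)$, and that such a map "is automatically surjective with kernel $Q(\Delta_1,\Delta_2)$." Surjectivity and the identification of the kernel are both correct, but this alone does not give \emph{uniqueness} of the irreducible quotient: if the length-two module were semisimple then $Q(\Delta_1,\Delta_2)$ would also be a quotient. To finish you must also show $\mathrm{Hom}\bigl(Q(\Delta_1)\times Q(\Delta_2),\,Q(\Delta_1,\Delta_2)\bigr)=0$, and the geometric-lemma computation you sketch does not address this. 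There is a cleaner route using tools already in the paper, and it is presumably what the terse citation of the involution is meant to be supplemented by: in the case where $\Delta_1$ precedes $\Delta_2$, the paper's Section \ref{Preliminaries on segments} records that the Langlands-ordered product $Q(\Delta_2)\times Q(\Delta_1)$ has $Q(\Delta_1,\Delta_2)$ as its \emph{unique} irreducible quotient, so by the length-two structure from Step 1 its unique irreducible submodule is $Q(\Delta_3)\times Q(\Delta_4)$; then the Duality Lemma (Lemma \ref{duality lemma}), in the equivalent form "$\pi$ is a submodule of $\rho_m\times\cdots\times\rho_1$ iff $\pi$ is a quotient of $\rho_1\times\cdots\times\rho_m$," transfers both the existence and the uniqueness of $Q(\Delta_3)\times Q(\Delta_4)$ as an irreducible quotient of $Q(\Delta_1)\times Q(\Delta_2)$. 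The case where $\Delta_2$ precedes $\Delta_1$ is the direct Langlands statement. This bypasses the intertwining-operator construction entirely and closes the uniqueness gap that your Step 2 leaves open.
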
 

\begin{lemma}[{\bf Duality Lemma, \cite {Cas}}]\label{duality lemma}
Let $\eta=\rho_{1}\times\cdots\times\rho_{m}$ where $\rho_{i}\in\ Irr({\rm GL}_{i}(F))$. Define $\breve{\eta}=\tilde{\rho_{m}}\times\cdots\times\tilde{\rho_{1}}$. If $\pi$ is an irreducible quotient of $\eta$ then $\tilde{\pi}$ is an irreducible quotient of
$\breve{\eta}$.
\end{lemma}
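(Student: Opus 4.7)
The plan is to apply the contragredient functor to the given surjection, and then reconcile $\tilde\eta$ with $\breve\eta$ via the standard identifications between normalized induction from a parabolic and from its opposite.

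First, I would apply the contragredient functor to the surjection $\eta \twoheadrightarrow \pi$. Since the contragredient is an exact contravariant auto-equivalence on the category of finite-length admissible representations, this yields an embedding $\tilde\pi \hookrightarrow \tilde\eta$. The standard compatibility of contragredient with normalized parabolic induction, namely
$$(\textrm{Ind}_{P}^{G}\sigma)^{\sim} \cong \textrm{Ind}_{P}^{G}\tilde\sigma,$$
which is a direct consequence of the $\delta_{P}^{1/2}\delta_{G}^{-1/2}$-normalization convention recorded in Section \ref{Notation}, identifies $\tilde\eta$ with $\tilde\rho_{1} \times \cdots \times \tilde\rho_{m}$. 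Thus $\tilde\pi$ is realized as an irreducible subrepresentation of $\tilde\rho_{1} \times \cdots \times \tilde\rho_{m}$.

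Next, I would translate this submodule relation into a quotient relation for the reversed-order induction $\breve\eta = \tilde\rho_{m} \times \cdots \times \tilde\rho_{1}$. The representations $\tilde\rho_{1} \times \cdots \times \tilde\rho_{m}$ and $\breve\eta$ are parabolic inductions of the same Levi data, but from opposite parabolic subgroups: conjugation by the Weyl element $w_{0}$ that reverses the order of blocks gives a canonical isomorphism $\textrm{Ind}_{P^-_{n_{1},\ldots,n_{m}}}^{G}(\tilde\rho_{1}\otimes\cdots\otimes\tilde\rho_{m}) \cong \breve\eta$. One then combines Bernstein's second adjointness, $\textrm{Hom}_{G}(\textrm{Ind}_{P}^{G}\sigma,\pi)\cong\textrm{Hom}_{M}(\sigma,r_{P^{-}}(\pi))$, with Casselman's theorem on the contragredient of normalized Jacquet modules, $r_{P}(\tilde\pi) \cong (r_{P^{-}}(\pi))^{\sim}$, in order to convert non-vanishing of $\textrm{Hom}_{G}(\eta,\pi)$ into non-vanishing of $\textrm{Hom}_{G}(\breve\eta,\tilde\pi)$.

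The main obstacle is this last step, the passage from a submodule of the standard-order product $\tilde\rho_{1} \times \cdots \times \tilde\rho_{m}$ to a quotient of the reversed-order product $\breve\eta$. For the two-factor case $m=2$, this falls out immediately from Theorems \ref{product of sub} and \ref{product of quotient}: when the constituent segments are linked, reversing the order of the two factors interchanges the roles of the unique irreducible submodule and the unique irreducible quotient in the length-two composition series, while in the unlinked case the induction is already irreducible and there is nothing to prove. For general $m$, one can proceed by induction, applying the two-factor swap to adjacent pairs to progressively reverse the whole product; at each swap, the two-factor theorems (applied to the relevant adjacent factor of the induction) guarantee that the position of $\tilde\pi$ as an irreducible subquotient transforms as required from submodule on one side to quotient on the other.
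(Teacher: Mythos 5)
The paper itself gives no proof of this lemma; it is quoted from Casselman's notes, so your proposal must stand on its own. Your opening moves are sound: dualizing the surjection $\eta\twoheadrightarrow\pi$ to get $\tilde\pi\hookrightarrow\tilde\eta$, and using the compatibility $(\textrm{Ind}_{P}^{G}\sigma)^{\sim}\cong\textrm{Ind}_{P}^{G}\tilde\sigma$ to identify $\tilde\eta$ with $\tilde\rho_{1}\times\cdots\times\tilde\rho_{m}$, are both correct. But you stall at precisely the hard step, and the fixes you propose do not work. Theorems \ref{product of sub} and \ref{product of quotient} apply \emph{only} to products $Z(\Delta_{1})\times Z(\Delta_{2})$ and $Q(\Delta_{1})\times Q(\Delta_{2})$ of segment representations; the $\rho_{i}$ here are arbitrary irreducibles of $\textrm{GL}_{i}(F)$, so invoking those theorems in the $m=2$ case is a misapplication. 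The inductive claim for general $m$ — that swapping adjacent factors ``transforms $\tilde\pi$ from submodule to quotient'' at each step — is not justified and is not true in the generality you need. Finally, the second-adjointness plus Casselman-pairing route you gesture at closes into a circle: combining $\textrm{Hom}_{G}(\textrm{Ind}_{P}^{G}\sigma,\pi)\cong\textrm{Hom}_{M}(\sigma,r_{\bar P}\pi)$ with $r_{P}(\tilde\pi)\cong(r_{\bar P}\pi)^{\sim}$ and then Frobenius just re-derives that $\tilde\pi$ embeds in $\tilde\rho_{1}\times\cdots\times\tilde\rho_{m}$, which you already had; it does not produce a surjection from the reversed product.

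The clean argument, special to $\textrm{GL}_{n}$ and using a tool the paper already cites (Theorem 7.3 of \cite{BZ1}), avoids all of this. By Gelfand--Kazhdan, for any irreducible admissible $\pi$ one has $\tilde\pi\cong\pi^{\theta}$, where $\theta(g)={}^{t}g^{-1}$. Let $w_{0}$ be the longest Weyl element and set $\phi(g)=w_{0}\,{}^{t}g^{-1}\,w_{0}^{-1}$. Then $\phi$ is an automorphism of $\textrm{GL}_{n}(F)$ taking the standard parabolic $P_{(n_{1},\ldots,n_{m})}$ to $P_{(n_{m},\ldots,n_{1})}$, its unipotent radical to the corresponding unipotent radical, and the Levi datum $\rho_{1}\otimes\cdots\otimes\rho_{m}$ to $\tilde\rho_{m}\otimes\cdots\otimes\tilde\rho_{1}$ (each block again by Gelfand--Kazhdan). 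Since $w_{0}$-conjugation is inner, $\pi^{\phi}\cong\pi^{\theta}\cong\tilde\pi$ for irreducible $\pi$. Twisting the whole exact sequence $\eta\twoheadrightarrow\pi$ by the automorphism $\phi$ therefore yields $\breve\eta\twoheadrightarrow\tilde\pi$ directly, with no need to reverse a chain of adjacent factors.
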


Let ${\rm Ext}_{G}^{1}(..,\mathbb C)$ be the derived group of the ${\rm Hom}_{G}(..,\mathbb C)$ functor (for details see, \cite{Prasad1} and \cite{Prasad2}). Then we have the following lemma.
\begin{lemma}\label{ext1}
Let $H={\rm Sp}_{n}(F)$. Then ${\rm Ext}_{H}^{1}(\mathbb C, \mathbb C)$ is trivial.
\end{lemma}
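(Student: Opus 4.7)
The plan is to identify $\mathrm{Ext}^{1}_{H}(\mathbb{C}, \mathbb{C})$ with the group of smooth additive characters $H \to (\mathbb{C},+)$, and then to kill the latter by exploiting the perfectness of $H = \mathrm{Sp}_{n}(F)$.

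Every class in $\mathrm{Ext}^{1}_{H}(\mathbb{C},\mathbb{C})$ is represented by a smooth extension
$$0 \to \mathbb{C} \to V \to \mathbb{C} \to 0$$
in which both the subrepresentation and the quotient carry the trivial $H$-action. Choosing a vector-space splitting $V = \mathbb{C} e_{1} \oplus \mathbb{C} e_{2}$ with $\mathbb{C} e_{1}$ the subrepresentation, the $H$-action must take the form $h \cdot e_{1} = e_{1}$ and $h \cdot e_{2} = \phi(h)\, e_{1} + e_{2}$ for some function $\phi : H \to \mathbb{C}$. Associativity of the action forces $\phi(h_{1} h_{2}) = \phi(h_{1}) + \phi(h_{2})$, and smoothness of $V$ (openness of the stabilizer of $e_{2}$) forces $\ker \phi$ to be open in $H$. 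Any change of splitting replaces $e_{2}$ by $e_{2} + c e_{1}$; since $H$ acts trivially on $e_{1}$, this leaves $\phi$ unchanged, so all $1$-coboundaries vanish. Hence
$$\mathrm{Ext}^{1}_{H}(\mathbb{C},\mathbb{C}) \;\cong\; \mathrm{Hom}_{\mathrm{sm}}(H,\mathbb{C}),$$
the group of continuous homomorphisms into the additive group of $\mathbb{C}$.

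To finish, I would invoke the classical fact that $H = \mathrm{Sp}_{n}(F)$ is a perfect group, that is $[H,H] = H$. Indeed, $H$ is generated by its long-root $\mathrm{SL}_{2}(F)$ subgroups, and each such $\mathrm{SL}_{2}(F)$ is perfect because over any infinite field every unipotent can be written as a commutator of a torus element and a unipotent (the standard identity $[h(s), u(t(s^{2}-1)^{-1})] = u(t)$ for $s \in F^{\times}$ with $s^{2} \neq 1$, and similarly for the opposite unipotent subgroup). Any homomorphism from a perfect group to an abelian group is trivial, so $\mathrm{Hom}_{\mathrm{sm}}(H,\mathbb{C}) = 0$, and the lemma follows.

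The only non-trivial input is the perfectness of $\mathrm{Sp}_{n}(F)$, which is standard, so no serious obstacle is expected; the argument is essentially a reduction to a commutator computation in $\mathrm{SL}_{2}(F)$.
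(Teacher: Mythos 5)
Your argument is essentially the paper's argument, made more explicit. The paper also passes from a smooth self-extension of the trivial representation to a homomorphism from $H$ into the upper-triangular unipotent subgroup of ${\rm GL}_{2}(\mathbb C)$ (which is just the additive group $\mathbb C$, matching your $\mathrm{Hom}_{\mathrm{sm}}(H,\mathbb C)$), and then kills such homomorphisms by observing that $H$ has no non-trivial abelian quotients; you simply spell out both the cocycle bookkeeping and the perfectness of $\mathrm{Sp}_{n}(F)$ via root $\mathrm{SL}_{2}$'s, neither of which the paper bothers to justify.
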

\begin{proof}
An element of ${\rm Ext}_{H}^{1}(\mathbb C, \mathbb C)$ corresponds to an exact sequence 
$$0\rightarrow \mathbb C \overset{i}\rightarrow V \overset{j} \rightarrow\mathbb C \rightarrow 0 $$
of $H$-modules, or equivalently a homomorphism from $H$ to the group of upper triangular unipotent subgroup of ${\rm GL}_{2}(\mathbb C)$. But as $H$ has no abelian quotients, there are no such non-trivial maps and we have the lemma.  
\end{proof}

A result which plays a very important role in the proofs is the fact that irreducible generic representations have no symplectic period. The following more general result is due to Offen and Sayag.
\begin {theorem}[{\bf Offen, Sayag}, \cite{Offen-Sayag3}]\label{strong uniqueness}
Let $\pi \in Irr ({\rm GL}_{n}(F))$. If $\pi$ embeds in a Klyachko model, it does so in a unique Klyachko model and with multiplicity at most one.
\end{theorem}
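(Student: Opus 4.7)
The plan is to establish the two conclusions of the theorem --- uniqueness of the Klyachko model and multiplicity at most one within it --- simultaneously, by induction on $n$, combining a Gelfand--Kazhdan style anti-involution argument for the multiplicity bound with a Mackey/geometric lemma analysis for uniqueness of the model.

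For the multiplicity at most one statement, I would fix a Klyachko datum $(2k,r)$ with $2k+r=n$, set $H=H_{2k,r}={\rm Sp}_{2k}(F)\ltimes U$, and let $\psi$ be the associated Klyachko character. The task is to produce an anti-involution $\theta$ of ${\rm GL}_{n}(F)$ which normalises $H$, preserves $\psi$, and satisfies $\pi\circ\theta\cong\tilde\pi$ for every $\pi\in Irr({\rm GL}_{n}(F))$. The natural candidate is essentially $g\mapsto w_{0}(g^{T})^{-1}w_{0}^{-1}$ for a Weyl element $w_{0}$ chosen to be compatible both with the symplectic form cutting out ${\rm Sp}_{2k}$ and with the antidiagonal underlying the Whittaker block on the remaining $r$ coordinates. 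Granting such a $\theta$, Bernstein's density principle implies that every $(H,\psi)$-bi-invariant distribution on ${\rm GL}_{n}(F)$ is $\theta$-invariant, and combined with $\pi\circ\theta\cong\tilde\pi$ this yields $\dim{\rm Hom}_{H}(\pi,\psi)\leq 1$.

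For uniqueness of the model, suppose $\pi$ is embedded in Klyachko models of two distinct types $(2k,r)\neq(2k',r')$, say with $r<r'$. Using induction in stages along the parabolics $P_{2k,r}$ and $P_{2k',r'}$, I would rewrite the two ${\rm Hom}$ spaces in terms of Jacquet modules and then apply the Bernstein--Zelevinsky geometric lemma to decompose them orbit by orbit. Each surviving stratum produces a pairing of the form $({\rm Sp}_{2j}\,\times\,\text{Whittaker})$ against a subquotient of a Jacquet module of $\pi$; by the inductive hypothesis these lower-rank pairings satisfy their own multiplicity/uniqueness statement, while the Heumos--Rallis theorem that no irreducible generic representation is symplectic-distinguished kills most remaining strata. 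A careful bookkeeping of the two orbit decompositions coming from the two hypothetical models should then expose incompatibility between the invariants $(2k,r)$ and $(2k',r')$ that $\pi$ is forced to carry, giving the desired contradiction.

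The main obstacle is the geometric lemma step: it produces many strata, and the vanishing of all but one of them requires precisely the multiplicity and uniqueness statement in smaller rank, so Steps one and two really must be carried out together in a single induction on $n$ rather than separately. A secondary but delicate issue is constructing $\theta$ in the mixed case $r>0$: the symplectic anti-involution on the ${\rm Sp}_{2k}$-block must be glued with the classical Gelfand--Kazhdan transpose-inverse on the ${\rm GL}_{r}$-Whittaker block, and one has to check that the resulting map also preserves $\psi$ restricted to the off-diagonal unipotent component of $U$. Once this gluing is set up, the Bernstein distribution argument and the geometric lemma chase are essentially formal, and the induction closes.
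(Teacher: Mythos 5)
This theorem is cited in the paper from Offen and Sayag's work \cite{Offen-Sayag3} and is not proved in the present paper; there is therefore no ``paper's own proof'' to compare your attempt against. What you have written is a reconstruction of the argument one would expect to find in the cited reference, and at the level of strategy it does track the broad outline of how Offen and Sayag proceed: a Gelfand--Kazhdan anti-involution argument (with Bernstein's localization principle for invariant distributions) for the multiplicity bound, and an analysis of Jacquet modules or derivatives to separate the different Klyachko types. So the plan is not unreasonable as a roadmap.

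That said, as a \emph{proof} your proposal has substantial gaps that you yourself flag but do not close. The anti-involution in the mixed case $r>0$ is not just ``delicate'': one has to write down an explicit Weyl element $w_0$, verify that conjugation-transpose by it stabilizes the semidirect product ${\rm Sp}_{2k}(F)\ltimes U$ and fixes the Klyachko character $\psi$ on $U$, and check that the resulting anti-automorphism really sends $\pi$ to $\tilde\pi$; none of these verifications is carried out, and the last one in particular requires Gelfand--Kazhdan's theorem and a compatibility check that is the technical heart of the matter. For the uniqueness-of-model part, the phrase ``careful bookkeeping \ldots should then expose incompatibility'' is exactly where the argument would have to live, and it is left entirely open. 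Moreover, the induction you set up is circular in a way you acknowledge but do not resolve: you invoke the inductive hypothesis to kill strata in the geometric-lemma decomposition, but the surviving strata involve representations of Levi factors that are products ${\rm GL}_a\times{\rm GL}_b$, not a single smaller ${\rm GL}_m$, so the precise form of the inductive statement needs to be strengthened before the induction can close. In short: the high-level plan is compatible with the cited result, but it is a sketch with unresolved load-bearing steps, not a proof, and in this paper the theorem is simply imported as a black box.
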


\section{Orbit Structures and Mackey theory}\setcounter{equation}{0}\label{Orbit structures and mackey theory}

Let $X$ be a subspace of a symplectic space $(V,\langle,\rangle)$ of dimension $2n$. Let
$$ X^{\perp}=\{y\in V|\ \langle y,x\rangle=0\ \ \forall x\in X\}.$$Define Rad $X=X\cap X^{\perp}$. Note that $X/ {\rm Rad} \ X$ inherits the symplectic structure of $V$, becomes a non-degenerate symplectic space and hence has even dimension. Then we have the following lemma which is a variant of the classical theorem of Witt for quadratic forms.

\begin{lemma}[{\bf Witt}]\label{witt}
a) Let $X_{1}, X_{2}$ be subspaces of $V$ of same dimension. Then there exists a symplectic automorphism $\phi$ of $V$, taking $X_{1}$ to $X_{2}$ iff dim Rad $X_{1}$ = dim Rad $X_{2}$.

b) Let $X_{1},X_{2}$ be two subspaces of $V$ and $\phi : X_{1} \mapsto X_{2}$ be a symplectic isomorphism. Then $\phi$ extends to a symplectic automorphism of $V$. 
\end{lemma}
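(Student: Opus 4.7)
The plan is to establish part (b) by induction on $\dim X_1$ and then derive part (a) from it. The base case $\dim X_1=0$ is vacuous. For the inductive step of (b), pick a codimension-one subspace $X_1'\subset X_1$ and set $X_2'=\phi(X_1')$. By the inductive hypothesis, $\phi|_{X_1'}$ extends to a symplectic automorphism $\Phi'$ of $V$. Writing $X_1=X_1'\oplus F u_0$ and setting $\widetilde\phi=(\Phi')^{-1}\circ \phi$, we obtain a symplectic embedding $\widetilde\phi:X_1\to V$ which fixes $X_1'$ pointwise. With $w=\widetilde\phi(u_0)$, the symplectic condition forces $r:=w-u_0\in (X_1')^\perp$. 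It therefore suffices to construct a symplectic automorphism $\Psi$ of $V$ that fixes $X_1'$ pointwise and sends $u_0$ to $u_0+r$; then $\Phi=\Phi'\circ\Psi$ extends $\phi$.

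The main tool is the symplectic transvection $T^c_v(x)=x+c\langle x,v\rangle v$, which preserves the form, fixes $v^\perp$ pointwise, and sends $u_0$ to $u_0+c\langle u_0,v\rangle v$. If $\langle u_0,r\rangle\neq 0$, then $T^c_r$ with $c=\langle u_0,r\rangle^{-1}$ is the desired $\Psi$ (it fixes $X_1'$ because $r\in (X_1')^\perp$). If $\langle u_0,r\rangle=0$ and $r\neq 0$, the strategy is to choose an auxiliary $r_1\in(X_1')^\perp$ satisfying both $\langle u_0,r_1\rangle\neq 0$ and $\langle w,r_1\rangle\neq 0$. Each of these conditions excludes only a proper hyperplane of $(X_1')^\perp$, with non-triviality coming from $u_0,w\notin X_1'=((X_1')^\perp)^\perp$, so such $r_1$ exists because $F$ is infinite. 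A direct computation shows that $T^{c_2}_{r-r_1}\circ T^{c_1}_{r_1}$, for suitable scalars $c_1,c_2$, carries $u_0$ first to $u_0+r_1$ and then to $u_0+r=w$, all while fixing $X_1'$ pointwise.

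To deduce (a), necessity is immediate: a symplectic automorphism preserves perpendicular complements, and therefore radicals. For sufficiency, by (b) it suffices to exhibit a symplectic isomorphism $\psi:X_1\to X_2$. Choose subspaces $Y_i\subset X_i$ complementary to $\mathrm{Rad}\,X_i$ on which the form restricts non-degenerately; this is possible because $X_i/\mathrm{Rad}\,X_i$ carries a canonical non-degenerate symplectic form. Since $\dim Y_1=\dim Y_2$ and both spaces are non-degenerate symplectic, a choice of standard symplectic bases yields a symplectic isomorphism $Y_1\to Y_2$. Combining this with any linear isomorphism $\mathrm{Rad}\,X_1\to \mathrm{Rad}\,X_2$ produces the required $\psi$, because all pairings involving radical vectors vanish automatically.

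The main obstacle is the case $\langle u_0,r\rangle=0$ in the inductive step of (b): a single transvection no longer suffices, and one must compose two transvections via a carefully chosen intermediate $r_1$, whose existence depends on $F$ being infinite (to avoid a finite union of proper hyperplanes in $(X_1')^\perp$).
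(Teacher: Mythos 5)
The paper states this lemma as a classical result (``a variant of the classical theorem of Witt for quadratic forms'') and supplies no proof, so there is nothing to compare against. Your argument is correct: the induction on $\dim X_1$ reduces part (b) to moving $u_0$ to $u_0+r$ with $r\in(X_1')^\perp$ by a symplectic automorphism fixing $X_1'$ pointwise; the transvections $T_v^c(x)=x+c\langle x,v\rangle v$ with $v\in(X_1')^\perp$ do exactly that, and the two-transvection trick handles $\langle u_0,r\rangle=0$ since $\langle u_0+r_1,\,r-r_1\rangle=-\langle w,r_1\rangle\neq 0$ by your choice of $r_1$; and part (a) then follows by building the symplectic isomorphism $X_1\to X_2$ blockwise on a non-degenerate complement and on the radical. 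One tiny remark: the existence of $r_1$ does not actually require $F$ infinite --- a vector space over any field is never the union of two proper subspaces --- but the appeal is harmless here since $F$ is a non-archimedean local field.
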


It follows from this lemma that if $X$ is a $k$-dimensional subspace of $V$, and $P_{X}$ is the parabolic subgroup of ${\rm GL}(V)$ consisting of automorphisms of $V$ leaving $X$ invariant, then ${\rm Sp}(V)\setminus {\rm GL}(V)/P_{X}$ is in bijective correspondence with integers $i$, $0\leq i\leq {\rm dim} \ X$ such that ${\rm dim} \ X-i$ is even. To get a set of representatives for these double cosets, let   
$$\{ e_{1}, e_{2},..., e_{n}, f_{1},f_{2},...,f_{n} \}$$be the standard symplectic basis of $V$, i.e. $\langle e_{i},f_{j}\rangle=\delta_{ij}$. Define,
$$Y_{r}:=\langle e_{1},...,e_{r}\rangle$$
$$Y_{r}^{\vee}:=\langle f_{1},...,f_{r}\rangle$$
$$S_{k,r}:=\langle e_{r+1},...,e_{(k+r)/2},f_{r+1},...,f_{(k+r)/2}\rangle$$
$$T_{k,r}:=\langle e_{\frac{k+r}{2}+1},...,e_{n},f_{\frac{k+r}{2}+1},...,f_{n}\rangle$$
$$X_{k,r}:=Y_{r}+S_{k,r}.$$
Note that ${\rm GL}(V)/P_{X}$ is the set of all $k$-dimensional subspaces of $V$ on which ${\rm Sp}(V)$ acts in a natural way. Therefore ${\rm Sp}(V)\setminus {\rm GL}(V)/P_{X}$ is represented by a certain set of $k$-dimensional subspaces of $V$ which can be taken to be the spaces $X_{k,r}$ with $0\leq r\leq k$ such that $k-r$ is even. 

Since $\textrm{dim}\ X= \textrm{dim}\ X_{k,r}$ there exists an automorphism $g\in \textrm{GL}(V)$ taking $X$ to $X_{k,r}$. This automorphism gives an isomorphism from $P_{X}$ to $P_{X_{k,r}}$. Using this isomorphism a representation of $P_{X}$ can be considered to be a representation of $P_{X_{k,r}}$. By Mackey theory, the restriction of the representation ${\rm Ind}^{{\rm GL}(V)}_{P_{X}}(\sigma)$ to ${\rm Sp}(V)$ is obtained by glueing the representations,
$${\rm ind}^{{\rm Sp}(V)}_{({\rm Sp}(V) \cap P_{X_{k,r}})}(\delta_{P_{X}}^{1/2}\sigma|_{({\rm Sp}(V) \cap P_{X_{k,r}})})$$where the induction is non-normalized. Note that the isomorphism of $P_{X}$ with $P_{X_{k,r}}$ takes the unipotent radical of $P_{X}$ to the unipotent radical of $P_{X_{k,r}}$ and hence the representation of $P_{X_{k,r}}$ so obtained is of the same kind that appears in parabolic induction. We remark that this is a special case for maximal parabolics of Proposition 3 of \cite{Offen}.  
  
For an isotropic subspace $Y$ of $V$, the subgroup $Q_{Y}$ of ${\rm Sp}(V)$ stabilizing $Y$ is a parabolic subgroup of ${\rm Sp}(V)$, with Levi decomposition
$$Q_{Y}=\big({\rm GL}(Y)\times {\rm Sp}(Y^{\perp}/Y)\big)\ltimes U$$where $U$ is the subgroup of ${\rm Sp}(V)$ preserving $Y\subset Y^{\perp}$ and acting trivially on $Y$, $Y^{\perp}/Y$ and $V/Y^{\perp}$. 

We fix a symplectic basis of $V$ and identify the group of linear transformations with the corresponding group of matrices although we would like to emphasize that the following proposition and its corollary is independent of the choice of the basis.
\begin{proposition}\label{explicit description of stab}
The subgroup $H_{k,r}$ of ${\rm Sp}(V)$ stabilizing the subspace $X_{k,r}$ of $V$ is$$H_{k,r}=\big({\rm GL}(Y_{r})\times {\rm Sp}(S_{k,r})\times {\rm Sp}(T_{k,r})\big).U_{k,r}$$where $U_{k,r}$ is the unipotent group inside ${\rm Sp}(V)$ consisting of automorphisms of $V$ of the form
$$\left(\begin{array}{cccc}
I_{r}&A&B&C \\
0&I_{k-r}&0&A^{'} \\
0&0&I_{2n-(k+r)}&B^{'}\\
0&0&0&I_{r}  
\end{array}\right)$$with $A\in {\rm Hom}(S_{k,r},Y_{r})$, $B\in {\rm Hom}(T_{k,r},Y_{r})$ and $A^{'}\in {\rm Hom}(Y_{r}^{\vee},S_{k,r})$, the adjoint of $A$, and $B^{'}\in {\rm Hom}(Y_{r}^{\vee},T_{k,r})$, the adjoint of $B$; the matrix $C\in{\rm Hom}(Y_{r}^{\vee},Y_{r})$ is symmetric.  
\end{proposition}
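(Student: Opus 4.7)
The plan is to realize $H_{k,r}$ as the stabilizer in ${\rm Sp}(V)$ of a canonical partial filtration attached to $X_{k,r}$, and then to extract the displayed Levi and unipotent parts by a direct matrix computation. First I would observe that any $g \in {\rm Sp}(V)$ preserving $X_{k,r}$ automatically preserves $X_{k,r}^{\perp}$, and hence also $\mathrm{Rad}(X_{k,r}) = X_{k,r}\cap X_{k,r}^{\perp}$. A direct calculation in the standard symplectic basis gives
\[
X_{k,r}^{\perp} = Y_{r}\oplus T_{k,r}, \qquad \mathrm{Rad}(X_{k,r}) = Y_{r}, \qquad X_{k,r} + X_{k,r}^{\perp} = Y_{r}\oplus S_{k,r}\oplus T_{k,r},
\]
so $H_{k,r}$ is precisely the subgroup of ${\rm Sp}(V)$ preserving the four-step filtration of $V$ whose associated graded pieces are $Y_{r}$, $S_{k,r}$, $T_{k,r}$, and $Y_{r}^{\vee}$ (the last identified with $V/(X_{k,r}+X_{k,r}^{\perp})$ via the symplectic pairing).

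Next I would identify the Levi factor. The symplectic form restricts to nondegenerate symplectic forms on $S_{k,r}$ and on $T_{k,r}$, while it puts $Y_{r}$ in duality with the top quotient $Y_{r}^{\vee}$; consequently the action of $H_{k,r}$ on the associated graded factors through $\mathrm{GL}(Y_{r}) \times \mathrm{Sp}(S_{k,r}) \times \mathrm{Sp}(T_{k,r})$, with the action on $Y_{r}^{\vee}$ forced to be the contragredient of the action on $Y_{r}$. Applying Witt's theorem (Lemma \ref{witt}(b)) piecewise confirms that every such element is actually realized by some $g \in H_{k,r}$, and the chosen ordered basis splits this quotient back into $H_{k,r}$ as the block-diagonal subgroup with respect to $V = Y_{r}\oplus S_{k,r}\oplus T_{k,r}\oplus Y_{r}^{\vee}$.

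For the unipotent radical $U_{k,r}$ I would write an arbitrary element of $H_{k,r}$ acting trivially on the associated graded in the block-unitriangular form displayed in the statement, with off-diagonal blocks $A, B, C, A', B'$, and then impose the symplectic condition $g^{T} J g = J$, where $J$ is the matrix of $\langle\ ,\ \rangle$ in the ordered basis (an off-diagonal pair of $r\times r$ identity blocks between $Y_{r}$ and $Y_{r}^{\vee}$ together with the symplectic matrices on $S_{k,r}$ and $T_{k,r}$). This is the main computational obstacle of the proof: the $(2,4)$ and $(3,4)$ block equations force $A'$ (respectively $B'$) to be the symplectic adjoint of $A$ (respectively $B$), while the $(4,4)$ block equation yields the symmetry condition on $C$. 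Once this is in hand, checking that $U_{k,r}$ is normalized by the Levi and meets it trivially (both immediate from the matrix shape) gives the claimed semidirect product structure for $H_{k,r}$.
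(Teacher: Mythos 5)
Your overall strategy is the same as the paper's: realize $H_{k,r}$ as the stabilizer in ${\rm Sp}(V)$ of the canonical flag $0 \subset Y_{r} = X_{k,r}\cap X_{k,r}^{\perp} \subset X_{k,r} \subset Y_{r}^{\perp} = X_{k,r}+X_{k,r}^{\perp} \subset V$, read off the Levi quotient from the action on the graded pieces, and identify the kernel as $U_{k,r}$. The paper finishes by appealing to the well-known structure of the maximal parabolic of ${\rm Sp}(V)$ attached to the isotropic subspace $Y_r$; you instead propose to finish with a direct $g^{T}Jg=J$ computation. The two are interchangeable, and the minor overkill of invoking Witt's theorem for surjectivity of the Levi map (a block-diagonal $\mathrm{diag}(g,h_1,h_2,{}^t g^{-1})$ does it directly) is harmless.

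There is one small gap worth flagging in the unipotent-radical step. You say you would ``write an arbitrary element of $H_{k,r}$ acting trivially on the associated graded in the block-unitriangular form displayed in the statement.'' But the most general unipotent element of the full flag stabilizer that acts trivially on the graded pieces has an additional $(2,3)$ block $E \in \mathrm{Hom}(T_{k,r},S_{k,r})$, which the displayed matrix has already set to zero. Before carrying out the adjoint/symmetry computations for $A',B',C$, you need to see that the symplectic condition kills $E$: indeed the $(3,2)$ entry of $g^{T}Jg$ is $E^{T}J_{1}$, where $J_1$ is the symplectic form on $S_{k,r}$, and setting it to $0$ forces $E=0$. Alternatively, the paper's route makes this automatic: the kernel $U_{k,r}$ acts trivially on the \emph{whole} of $Y_r^{\perp}/Y_r = S_{k,r}\oplus T_{k,r}$ (not just on the graded pieces), because a symplectic automorphism of $Y_r^{\perp}/Y_r$ preserving $S_{k,r}$ also preserves $T_{k,r}=S_{k,r}^{\perp}$ and hence is the identity once it is trivial on each summand; this places $U_{k,r}$ inside the unipotent radical of the Siegel-type parabolic of ${\rm Sp}(V)$ attached to $Y_r$, which already has the stated block shape. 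Either fix is quick, but as written your step silently assumes the conclusion for the $(2,3)$ block.
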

\begin{proof}
Note that $H_{k,r}$ is nothing but the symplectic automorphisms of $V$ preserving the flag:
$$0\subset Y_{r}=X_{k,r}\cap X_{k,r}^{\perp}\subset X_{k,r}\subset X_{k,r}+X_{k,r}^{\perp}=X_{k,r}+T_{k,r}=Y_{r}^{\perp}\subset V.$$Hence $H_{k,r}$ acts on the successive quotients of this filtration, giving rise to a surjective homomorphism to ${\rm GL}(Y_{r})\times {\rm Sp}(S_{k,r})\times {\rm Sp}(T_{k,r})$ with kernel $U_{k,r}$ consisting of the subgroup of ${\rm Sp}(V)$ preserving the flag and acting trivially on successive quotients. Clearly $U_{k,r}$ acts trivially on the isotropic subspace $Y_{r}$, $Y^{\perp}_{r}$ and $Y_{r}^{\perp}/Y_{r}=S_{k,r}+T_{k,r}$. The well-known knowledge of the structure of the parabolic in ${\rm Sp}(V)$ defined by $Y_{r}$ proves the assertion of the proposition.      
\end{proof}

\begin{corr}\label{modular characters}
1) The modular character $\delta_{k,r}$ of the group $H_{k,r}$, is $\newline$$\delta_{k,r}({\rm diag}(g,h_{1},h_{2},\ ^{t}g^{-1}))=|\det (g)|^{r+a+b+1}$ where $r$=dim $Y_{r}$, $a$=dim $S_{k,r}=k-r$, $b$=dim $T_{k,r}=2n-(k+r)$ and $g\in {\rm GL}(Y_{r})$; $\newline$
2) By eqn.(\ref{modular}), for $P=P_{(r+a,b+r)}$, $\delta_{P}({\rm diag}(g,h_{1},h_{2},\ ^{t}g^{-1}))=|\det (g)|^{2r+a+b}$. Thus $\frac{\delta_{P}^{1/2}}{\delta_{k,r}}({\rm diag}(g,h_{1},h_{2},\ ^{t}g^{-1}))=|\det (g)|^{-1-(a+b)/2}=|\det (g)|^{-(n-r+1)}$. 
\end{corr}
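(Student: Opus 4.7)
The plan is to obtain $\delta_{k,r}$ by computing the determinant of the adjoint action of the Levi on the Lie algebra $\mathfrak{u}_{k,r}$, using the explicit parametrization given in Proposition \ref{explicit description of stab}. An element of $U_{k,r}$ is determined by the triple $(A, B, C)$ with $A \in {\rm Hom}(S_{k,r}, Y_{r})$, $B \in {\rm Hom}(T_{k,r}, Y_{r})$ and $C \in {\rm Hom}(Y_{r}^{\vee}, Y_{r})$ symmetric (the blocks $A'$ and $B'$ being determined by $A$ and $B$ via the symplectic condition). Conjugating such a unipotent element by ${\rm diag}(g, h_{1}, h_{2},\ ^{t}g^{-1})$ sends $A \mapsto gAh_{1}^{-1}$, $B \mapsto gBh_{2}^{-1}$, and $C \mapsto gC\ ^{t}g$.

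Since $h_{1}$ and $h_{2}$ are symplectic and hence have determinant $1$, they contribute nothing to the absolute value of the determinant. The action $A \mapsto gA$ on ${\rm Hom}(S_{k,r}, Y_{r})$ (a space of dimension $ra$) has determinant $\det(g)^{a}$, and likewise $B \mapsto gB$ contributes $\det(g)^{b}$. The nontrivial piece is the map $C \mapsto gC\ ^{t}g$ on $r \times r$ symmetric matrices: by diagonalizing $g$ with eigenvalues $\lambda_{1}, \ldots, \lambda_{r}$, the operator becomes diagonal with eigenvalues $\lambda_{i}\lambda_{j}$ for $i \leq j$, and a bookkeeping argument shows that each $\lambda_{k}$ appears with total multiplicity $r+1$, so the determinant is $\det(g)^{r+1}$. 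Multiplying the three contributions yields $\delta_{k,r}({\rm diag}(g, h_{1}, h_{2},\ ^{t}g^{-1})) = |\det(g)|^{a+b+r+1}$, proving (1).

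For (2), I would apply equation (\ref{modular}) directly to $P = P_{(r+a, b+r)}$. Writing the Levi element as two blocks $g_{1} = {\rm diag}(g, h_{1})$ of size $r+a$ and $g_{2} = {\rm diag}(h_{2},\ ^{t}g^{-1})$ of size $b+r$, the symplectic condition on $h_{1}, h_{2}$ gives $|\det(g_{1})| = |\det(g)|$ and $|\det(g_{2})| = |\det(g)|^{-1}$, so that $\delta_{P} = |\det(g)|^{(b+r)+(r+a)} = |\det(g)|^{2r+a+b}$. Dividing by $\delta_{k,r}$ and using $a+b = 2n-2r$ produces the stated exponent $-(n-r+1)$.

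The only delicate step is the exponent on the symmetric block. The natural temptation is to read off $\det(g)^{r}$ or $\det(g)^{2r}$ by analogy with the non-symmetric case; the correct factor $\det(g)^{r+1}$ (equivalently, the half-sum of positive roots for the symplectic Lie algebra on the Siegel parabolic) is what makes the modular character have weight $r+a+b+1$ rather than $r+a+b$ or $2r+a+b$, and it is also exactly what matches $\delta_{P}^{1/2}$ to give the clean formula in (2).
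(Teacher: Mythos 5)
Your computation is correct and is exactly the intended argument (the paper presents the corollary without proof as a direct consequence of Proposition \ref{explicit description of stab}). The decisive step is the determinant of $C\mapsto gC\,^{t}g$ on the symmetric block: your eigenvalue bookkeeping giving $\det(g)^{r+1}$ is right, and one can also get it without diagonalizing by noting that the action on all of $M_{r\times r}$ has determinant $\det(g)^{2r}$, the action on the alternating block has determinant $\det(g)^{r-1}$, and the quotient gives $\det(g)^{r+1}$. The contributions $\det(g)^a$, $\det(g)^b$ from the $A,B$ blocks (with $h_1,h_2$ of determinant $1$) and the application of equation (\ref{modular}) to $P_{(r+a,b+r)}$ are all as in the paper, so this is the same proof.
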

Define $M$ to be the group ${\rm GL}(Y_{r})\times {\rm Sp}(S_{k,r})\times {\rm Sp}(T_{k,r})$ and identify it with ${\rm GL}_{r}(F)\times {\rm Sp}_{(k-r)/2}(F)\times {\rm Sp}_{(2n-k-r)/2}(F)$ via the fixed basis. Call $H$ the group ${\rm Sp}_{n}(F)$ defined with respect to this symplectic basis. Further let $N=N_{1}\times N_{2}$ (where $N_{1}$, $N_{2}$ are the unipotent subgroups of ${\rm GL}_{k}(F)$ and ${\rm GL}_{2n-k}(F)$ corresponding to the partition $(r,k-r)$ and $(2n-k-r,r)$ respectively). Let $\sigma_{1}\in Irr({\rm GL}_{k}(F))$ and $\sigma_{2}\in Irr({\rm GL}_{2n-k}(F))$. Call $\sigma$ the representation of $P=P_{(k,2n-k)}$ obtained by extending $\sigma_{1}\otimes \sigma_{2}$ to $P$ in the usual way. 

By Frobenius reciprocity and Corollary \ref{modular characters}, we get $${\rm Hom}_{H}\big({\rm ind}^{H}_{H_{k,r}}(\delta_{P}^{1/2}\sigma|_{H_{k,r}}),\mathbb C\big) ={\rm Hom}_{MU_{k,r}}\big(\nu^{-(n-r+1)}\sigma_{1}\otimes\sigma_{2},\mathbb C\big).$$ Clearly,
$${\rm Hom}_{M.U_{k,r}}\big(\nu^{-(n-r+1)}\sigma_{1}\otimes\sigma_{2},\mathbb C\big)={\rm Hom}_{MN}\big(\nu^{-(n-r+1)}\sigma_{1}\otimes\sigma_{2},\mathbb C\big).$$
Since the normalized Jacquet functor is left adjoint to normalized induction (cf. Proposition 1.9 (b) of \cite{BZ2}) we have,$${\rm Hom}_{MN}\big(\nu^{-(n-r+1)}\sigma_{1}\otimes\sigma_{2},\mathbb C\big)={\rm Hom}_{M}\big(r_{N}(\nu^{-(n-r+1)}\sigma_{1}\otimes\sigma_{2}),\delta_{N}^{-1/2}\big)$$
$$={\rm Hom}_{M}\big(\nu^{-(n-r+1)}\delta_{N_{1}}^{1/2}r_{N_{1}}(\sigma_{1})\otimes \delta_{N_{2}}^{1/2}r_{N_{2}}(\sigma_{2}),\mathbb C\big).\ $$
Now (by eqn.(\ref{modular})) for $\det A=\det B=1$, $$\delta_{N_{1}}\left(\begin{array}{cc}
g&* \\
0&A  
\end{array}\right)=|\det(g)|^{(k-r)}, 
\delta_{N_{2}}\left(\begin{array}{cc}
B&* \\
0&\ ^{t}g^{-1}  
\end{array}\right)=|\det(g)|^{2n-(k+r)}.$$Define $\alpha$ to be the character of $M$ such that $\alpha({\rm diag}(g,h_{1},h_{2},\ ^{t}g^{-1}))=\nu^{-1}(g)$. Plugging in the value of the delta functions we get, 
\begin{align}\label{general case eqn}
{\rm Hom}_{H}\big({\rm ind}^{H}_{H_{k,r}}(\delta_{P}^{1/2}\sigma|_{H_{k,r}}),\mathbb C\big)={\rm Hom}_{M}\big(\alpha (r_{N_{1}}(\sigma_{1})\otimes r_{N_{2}}(\sigma_{2})),\mathbb C\big).
\end{align}
From this we have the following lemma for ${\rm GL}_{2n}(F)$.

\begin{lemma}\label{irr}
Let $\pi_{i}=Z(\Delta_{1}^{i},...,\Delta_{k_{i}}^{i})\in Irr({\rm GL}_{n_{i}}(F))$ for $i=1,...,s$ be such that the following conditions are satisfied:
$\newline$1) For $i\neq j$, $\Delta_{m_{i}}^{i},\Delta_{m_{j}}^{j}$ are disjoint and not linked ($\forall \ m_{i}=1,...,k_{i};\ \forall\ m_{j}=1,...,k_{j}$). $\newline$2) $\Sigma_{i=1}^{s}n_{i}$ is even and $\pi:=\pi_{1}\times\cdots\times\pi_{s}$ has a symplectic period.
$\newline$Then each $n_{i}$ is even and every $\pi_{i}$ has a symplectic period.    
\end{lemma}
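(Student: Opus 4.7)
I would induct on $s$, with the substantive case being $s=2$. For the inductive step: since no pair of segments from distinct $\pi_i$ is linked, iterated application of Theorem~\ref{product of sub} gives that $\pi' := \pi_2 \times \cdots \times \pi_s$ is irreducible and equals $Z$ of the combined multiset of segments, and it still satisfies the hypothesis jointly with $\pi_1$. Applying the $s=2$ case to $\pi_1 \times \pi'$ would yield $n_1$ even and both $\pi_1$ and $\pi'$ $H$-distinguished, after which recursion finishes the proof.

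\textbf{The case $s=2$ via Mackey.} Set $k = n_1$, so $\pi = \pi_1 \times \pi_2$ is induced from the maximal parabolic $P_{(k, 2n-k)}$ of ${\rm GL}_{2n}(F)$. The Mackey filtration of $\pi|_H$ (with $H = {\rm Sp}_n(F)$) developed in Section~\ref{Orbit structures and mackey theory} is indexed by integers $r$ with $0 \leq r \leq k$ and $k - r$ even. By equation~(\ref{general case eqn}), the $r$-th subquotient carries a non-trivial $H$-invariant functional precisely when there exist irreducible subquotients $\sigma_1 \otimes \sigma_1'$ of $r_{N_1}(\pi_1)$ and $\sigma_2' \otimes \sigma_2$ of $r_{N_2}(\pi_2)$ with $\sigma_1 \cong \sigma_2 \nu$ on ${\rm GL}_r(F)$ (from matching the $g$ and ${}^{t}g^{-1}$ blocks together with the twist by $\alpha$), and with $\sigma_1'$, $\sigma_2'$ distinguished by ${\rm Sp}_{(k-r)/2}(F)$ and ${\rm Sp}_{(2n-k-r)/2}(F)$ respectively. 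For $r=0$ the Jacquet functors are identity maps and the condition becomes exactly the statement that $n_1$ (hence $n_2$) is even and both $\pi_i$ are $H$-distinguished---the desired conclusion.

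\textbf{Killing $r > 0$.} The crux is to show that every $r > 0$ piece has zero Hom. Since $\pi_j = Z(\Delta_1^j, \ldots, \Delta_{k_j}^j)$, the cuspidal support of any $\sigma_j$ lies in the union of the segments of $\pi_j$, so $\sigma_1 \cong \sigma_2 \nu$ would force a cuspidal $\rho$ with $\rho \in {\rm cs}(\pi_2)$ and $\rho \nu \in {\rm cs}(\pi_1)$. A direct segment check rules this out: letting $\Delta_2 \subseteq \pi_2$ and $\Delta_1 \subseteq \pi_1$ be the (necessarily same-cuspidal-line) segments containing $\rho$ and $\rho\nu$, either $\rho\nu$ is not the leftmost element of $\Delta_1$, in which case $\rho \in \Delta_1 \cap \Delta_2$, contradicting disjointness; or $\rho\nu$ is the leftmost element of $\Delta_1$ and $\rho$ is forced to be the rightmost element of $\Delta_2$, making $\Delta_1$ and $\Delta_2$ juxtaposed, contradicting ``not linked''. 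Once every $r>0$ piece is eliminated, left-exactness of ${\rm Hom}_H(-, \mathbb{C})$ along the Mackey filtration forces the $r=0$ piece to be distinguished, giving the conclusion. The main obstacle is precisely this cuspidal-support exclusion, which uses \emph{both} halves of the hypothesis: disjointness rules out the interior case and ``not linked'' rules out the juxtaposed-boundary case.
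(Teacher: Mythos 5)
Your proof follows essentially the same route as the paper: reduce to $s=2$ using irreducibility of $\pi$, invoke the Mackey filtration and equation~(\ref{general case eqn}), and kill every $r>0$ orbit by a cuspidal-support argument deduced from condition~(1), which forces the $r=0$ orbit to exist (giving $n_1$ even) and to carry the invariant functional (giving both $\pi_i$ distinguished). The only small discrepancy is in the reduction step: the irreducibility of $\pi_1\times\cdots\times\pi_s$ and its identification with $Z$ of the combined multiset is not an iterated application of Theorem~\ref{product of sub} (which is stated for products $Z(\Delta_1)\times Z(\Delta_2)$ of single segments), but rather Proposition~8.5 of \cite{Z}, which is the citation the paper gives.
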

\begin{proof}
Observe that condition 1) forces $\pi$ to be irreducible (by Proposition 8.5 of \cite{Z}). Thus it is enough to prove the lemma for $s=2$. 

Let $\pi_{1}\in Irr({\rm GL}_{n_{1}}(F))$ and $\pi_{2}\in Irr({\rm GL}_{n_{2}}(F))$. Now, since $r_{N_{1}}(\pi_{1})\in Alg({\rm GL}_{r}(F)\times {\rm GL}_{n_{1}-r}(F))$ and the functor $r_{N_{1}}$ takes finite length representations into ones of finite length (Proposition 1.4 of \cite{Z}), up to semi-simplification it is of the form $\Sigma_{i=1}^{t_{1}}\pi_{1i}\otimes\tau_{1i}$ for some $t_{1}>0$ where $\pi_{1i}\in Irr({\rm GL}_{r}(F))$ and $\tau_{1i}\in Irr({\rm GL}_{n_{1}-r}(F))$ for all $i=1,..,t_{1}$. Similarly, up to semi-simplification, $r_{N_{2}}(\pi_{2})$ is equal to $\Sigma_{j=1}^{t_{2}}\tau_{2j}\otimes\pi_{2j} $ where $\tau_{2j}\in Irr({\rm GL}_{n_{2}-r}(F))$, $\pi_{2j}\in Irr({\rm GL}_{r}(F))$. 

We claim that for any $\theta\in Irr ({\rm GL}_{m}(F))$, the cuspidal support (see Section \ref{Notation} for the definition) of $r_{N}(\theta)$ is always a subset (as a set) of the cuspidal support of $\theta$. Assume $\theta=Z(\Delta_{1},...,\Delta_{l})$. The statement of geometrical lemma (Lemma 2.12 of \cite {BZ2}) applied to $r_{N}(Z(\Delta_{1})\times...\times Z(\Delta_{l}))$ along with the observation that $r_{N}(\theta)$ is a submodule of it gives the claim.   

The above claim and condition 1) of the lemma implies that ${\rm Hom}_{{\rm GL}_{r}}(\nu^{-1}\pi_{1i}\otimes\tilde{\pi_{2j}},\mathbb C)=0$ for every pair $i,j$. By \ref{general case eqn} and the realization of contragredient representations due to Gelfand and Kazhdan (cf. Theorem 7.3 of \cite{BZ1}), this implies$${\rm Hom}_{H}\big({\rm ind}^{H}_{H_{n_{1},r}}(\delta_{P_{n_{1},n_{2}}}^{1/2}(\pi_{1}\otimes\pi_{2})|_{H_{n_{1},r}}),\mathbb C\big)=0 $$unless $r=0$. This along with condition 2) forces $n_{1},n_{2}$ to be even and $\pi_{1},\pi_{2}$ both to have symplectic periods.   
\end{proof}
Before we state the next lemma, for the convenience of the reader let us recall that if $\Delta=[\rho,\nu^{k-1}\rho]$, then $Z(\Delta)\cong U(\nu^{\frac{k-1}{2}}\rho,k)$.
\begin{lemma}\label{existence of symplectic period lemma}
Let $\Delta_{1}$ and $\Delta_{2}$ be two segments of even lengths such that their intersection is of odd length. Then the representation $\theta=Z(\Delta_{1},\Delta_{2})$ has a symplectic period.
\end{lemma}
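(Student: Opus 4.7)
The plan is to combine the length-$2$ structure of $\pi := Z(\Delta_{1}) \times Z(\Delta_{2})$ from Theorem \ref{product of sub} with a vanishing argument for the other irreducible constituent. Put $\Delta_{3} = \Delta_{1} \cup \Delta_{2}$ and $\Delta_{4} = \Delta_{1} \cap \Delta_{2}$; under the hypotheses $|\Delta_{3}|$ and $|\Delta_{4}|$ are both odd, and $\Delta_{4} \subset \Delta_{3}$ means these two segments are not linked. Thus Theorem \ref{product of sub} presents $\pi$ as an extension whose two irreducible constituents are $Z(\Delta_{1}, \Delta_{2})$ and the irreducible representation $Z(\Delta_{3}) \times Z(\Delta_{4})$, with the sub/quotient roles determined by which of $\Delta_{1}, \Delta_{2}$ precedes. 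By the reminder just before the lemma, $Z(\Delta_{i}) = \chi_{i} U(\delta_{i}, |\Delta_{i}|)$ with $|\Delta_{1}|, |\Delta_{2}|$ even, so Proposition \ref{main prop} already gives ${\rm Hom}_{H}(\pi, \mathbb C) \neq 0$. The lemma will then follow once I show ${\rm Hom}_{H}(Z(\Delta_{3}) \times Z(\Delta_{4}), \mathbb C) = 0$: the long exact sequence of ${\rm Hom}_{H}(-, \mathbb C)$ applied to the length-$2$ sequence produces an isomorphism (when $Z(\Delta_{1}, \Delta_{2})$ is the quotient) or an injection (when it is the submodule), either way yielding ${\rm Hom}_{H}(Z(\Delta_{1}, \Delta_{2}), \mathbb C) \neq 0$.

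For the vanishing I would apply the Mackey analysis of Section \ref{Orbit structures and mackey theory}. Let $\rho$ be the cuspidal of ${\rm GL}_{m}(F)$ underlying both segments, and normalize $\Delta_{3} = [\rho, \nu^{\ell_{3}-1}\rho]$, $\Delta_{4} = [\nu^{a}\rho, \nu^{a+\ell_{4}-1}\rho]$ with $0 \leq a \leq \ell_{3} - \ell_{4}$. The $H$-orbits relevant to $(Z(\Delta_{3}) \times Z(\Delta_{4}))|_{H}$ are indexed by $r = ms$ with $m(\ell_{3}-s)$ even, and \eqref{general case eqn} expresses each orbit contribution as a ${\rm Hom}_{M}$ involving the $\alpha$-twisted tensor of the two Jacquet modules. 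Using the single-term formula $r_{N}(Z([\rho, \nu^{\ell-1}\rho])) = Z([\rho, \nu^{s-1}\rho]) \otimes Z([\nu^{s}\rho, \nu^{\ell-1}\rho])$ and pairing the first block of $r_{N_{1}}(Z(\Delta_{3}))$ with the Gelfand--Kazhdan-twisted last block of $r_{N_{2}}(Z(\Delta_{4}))$, the ${\rm GL}_{ms}$-factor for $s \geq 1$ forces a segment equality that simplifies to $s = a + \ell_{4} + 1$. But non-vanishing of $r_{N_{2}}(Z(\Delta_{4}))$ forces $s \leq \ell_{4}$, and since $a \geq 0$ this is impossible. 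For the boundary orbit $s = 0$, available only when $m$ is even, the ${\rm GL}_{ms}$-factor collapses and the contribution becomes ${\rm Hom}_{{\rm Sp}}(Z(\Delta_{3}), \mathbb C) \otimes {\rm Hom}_{{\rm Sp}}(Z(\Delta_{4}), \mathbb C)$; each factor vanishes because $Z(\Delta_{i})$, up to a character twist trivial on ${\rm Sp}$, equals $U(\delta_{i}, \ell_{i})$ with $\ell_{i}$ odd, and Theorem \ref{unitary symplectic} rules out a symplectic period for such an irreducible unitary representation (it is not in $\mathcal{B}$ and cannot factor nontrivially).

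The main obstacle will be the ${\rm GL}_{ms}$-matching computation: one must carefully track the $\nu^{-1}$-twist coming from $\alpha$ together with the Gelfand--Kazhdan identification $\tilde{\tau}(g) = \tau({}^{t}g^{-1})$ when pairing the two ${\rm GL}_{ms}$-blocks, in order to obtain the clean linear equation $s = a + \ell_{4} + 1$. Once that is in place and the $s = 0$ boundary is handled by the unitary classification, the rest of the proof is a straightforward application of the long exact sequence on the length-$2$ filtration.
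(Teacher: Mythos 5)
Your overall strategy is the same as the paper's: use the length-two Zelevinsky decomposition of $Z(\Delta_{1}) \times Z(\Delta_{2})$ with constituents $Z(\Delta_{1},\Delta_{2})$ and the irreducible $Z(\Delta_{3}) \times Z(\Delta_{4})$, combine Proposition~\ref{main prop} (which gives distinction of the full induced representation) with vanishing of ${\rm Hom}_{H}(Z(\Delta_{3}) \times Z(\Delta_{4}),\mathbb C)$, and conclude via the ${\rm Hom}_{H}(-,\mathbb C)$ left-exact sequence. The genuine difference is how you establish that vanishing. The paper's proof is one line: since $\Delta_{3}, \Delta_{4}$ have odd length, $Z(\Delta_{3})\times Z(\Delta_{4})$ admits a \emph{mixed} Klyachko model by Theorem~3.7 of \cite{Offen-Sayag2}, and the disjointness of Klyachko models (Theorem~\ref{strong uniqueness}) then rules out a purely symplectic model. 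You instead carry out the Mackey orbit-by-orbit computation from Section~\ref{Orbit structures and mackey theory}: the ${\rm GL}_{ms}$-block matching for $s \geq 1$ forces $s = a + \ell_{4} + 1 > \ell_{4}$, killing the support condition, and the $s=0$ boundary orbit (only present for $m$ even) collapses to ${\rm Hom}_{{\rm Sp}}(Z(\Delta_{3}),\mathbb C)\otimes{\rm Hom}_{{\rm Sp}}(Z(\Delta_{4}),\mathbb C)$, which you kill by noting that an odd-index speh $U(\delta,\ell)$ is not in $\mathcal B$ and does not factor, hence is excluded by Theorem~\ref{unitary symplectic}. Both routes are correct; the trade-off is that the paper's citation of the mixed-model result bundles the whole vanishing into a single existing theorem and avoids the Jacquet-module bookkeeping, while your approach is more hands-on and makes the orbit analysis explicit, at the cost of invoking the unitary classification plus the (standard but unstated in the paper) fact that a speh representation is rigid under $\times$. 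Your argument goes through as written.
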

\begin{proof}
Let if possible ${\rm Hom}_{H}\big(\theta,\mathbb C\big)=0$. Define, the segments $\Delta_{3}=\Delta_{1}\cup\Delta_{2}$ and $\Delta_{4}=\Delta_{1}\cap\Delta_{2}$. Without loss of generality assume $\Delta_{1}$ precedes $\Delta_{2}$. By Theorem \ref{product of sub}, $\theta$ sits inside the following exact sequence of ${\rm GL}_{2n}(F)$ modules:
$$0 \rightarrow \theta \rightarrow Z(\Delta_{2})\times Z(\Delta_{1})\rightarrow Z(\Delta_{3})\times Z(\Delta_{4})\rightarrow 0.$$Observe that $\Delta_{3}$ and $\Delta_{4}$ are segments of odd length. So, $Z(\Delta_{3})\times Z(\Delta_{4})$ has a mixed Klyachko model by Theorem 3.7 of \cite{Offen-Sayag2} and hence by Theorem \ref{strong uniqueness}, it is not $H$-distinguished. Since ${\rm Hom}_{H}\big(Z(\Delta_{2})\times Z(\Delta_{1}),\mathbb C\big)=0$ if ${\rm Hom}_{H}\big(Z(\Delta_{3})\times Z(\Delta_{4}),\mathbb C\big)=0$ and ${\rm Hom}_{H}\big(\theta,\mathbb C\big)=0$, this contradicts Proposition \ref{main prop}.
\end{proof}
\begin{lemma}\label{juxt no symplectic period lemma}
Let $\Delta_{1}$ and $\Delta_{2}$ be two juxtaposed segments of even lengths in the cuspidal line of $1_{1}$ (the trivial representation of ${\rm GL}_{1}(F)$). Then the representation $\theta=Z(\Delta_{1},\Delta_{2})$ doesn't have a symplectic period. 
\end{lemma}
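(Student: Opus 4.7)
The plan is to realise $\theta$ as the kernel of a short exact sequence
\begin{equation*}
0 \lra \theta \lra \pi \lra Z(\Delta_{3}) \lra 0,
\end{equation*}
where $\pi$ is parabolically induced and $Z(\Delta_{3})$ is a one-dimensional character, then combine ${\rm Hom}_{H}(-,\mathbb C)$ with a Mackey computation on $\pi$ to force ${\rm Hom}_{H}(\theta,\mathbb C)=0$.

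After relabelling we may assume $\Delta_{2}$ precedes $\Delta_{1}$. Set $\Delta_{3}=\Delta_{1}\cup\Delta_{2}$; since $\Delta_{1}\cap\Delta_{2}=\emptyset$, Theorem \ref{product of sub} gives the above sequence with $\pi := Z(\Delta_{1})\times Z(\Delta_{2})$. Because $\Delta_{3}$ lies on the cuspidal line of $1_{1}$, $Z(\Delta_{3})=\nu^{s}\cdot 1_{|\Delta_{3}|}$ is a character of ${\rm GL}_{|\Delta_{3}|}(F)$, and since $\det|_{H}\equiv 1$ its restriction to $H$ is the trivial representation. Consequently
\begin{equation*}
{\rm Hom}_{H}(Z(\Delta_{3}),\mathbb C) \cong \mathbb C, \qquad {\rm Ext}^{1}_{H}(Z(\Delta_{3}),\mathbb C) = {\rm Ext}^{1}_{H}(\mathbb C,\mathbb C) = 0
\end{equation*}
by Lemma \ref{ext1}. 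Applying ${\rm Hom}_{H}(-,\mathbb C)$ to the short exact sequence therefore collapses the six-term sequence to
\begin{equation*}
0 \lra \mathbb C \lra {\rm Hom}_{H}(\pi,\mathbb C) \lra {\rm Hom}_{H}(\theta,\mathbb C) \lra 0,
\end{equation*}
so the lemma reduces to the bound $\dim {\rm Hom}_{H}(\pi,\mathbb C)\leq 1$; the reverse inequality is immediate from Proposition \ref{main prop}.

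For this upper bound I will invoke the Mackey decomposition of Section \ref{Orbit structures and mackey theory}. Writing $|\Delta_{2}|=2a$ and $|\Delta_{1}|=2b$, view $\pi$ as ${\rm ind}^{{\rm GL}_{2n}}_{P_{(2b,2a)}}(Z(\Delta_{1})\otimes Z(\Delta_{2}))$. The restriction $\pi|_{H}$ carries a filtration whose subquotients correspond to the orbits $r\in\{0,2,\dots,2\min(a,b)\}$, and by (\ref{general case eqn}) the $r$th orbit contributes the summand
\begin{equation*}
{\rm Hom}_{M}\bigl(\alpha\cdot(r_{N_{1}}(Z(\Delta_{1}))\otimes r_{N_{2}}(Z(\Delta_{2}))),\mathbb C\bigr),
\end{equation*}
with $M={\rm GL}_{r}(F)\times {\rm Sp}_{b-r/2}(F)\times {\rm Sp}_{a-r/2}(F)$ embedded in ${\rm GL}_{2n}(F)$ via $(g,h_{1},h_{2})\mapsto {\rm diag}(g,h_{1},h_{2},\,{}^{t}g^{-1})$. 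Each $Z(\Delta_{i})$ is a character, so each $r_{N_{i}}(Z(\Delta_{i}))$ is the explicit character $\nu^{s_{i}}\cdot\delta_{P_{i}}^{-1/2}$. Evaluating along $M$---where $|\det|=1$ on the symplectic factors and the two copies of ${\rm GL}_{r}$ are identified via $g$ and ${}^{t}g^{-1}$---the modular and twist contributions collapse, and after multiplication by $\alpha(g,h_{1},h_{2})=|\det g|^{-1}$ the combined character on $M$ reduces to $g\mapsto |\det g|^{r-1}$, independent of $a$, $b$ and of the common twist parameter. Since $r$ is even, this character is nontrivial for every $r\geq 2$, so only the $r=0$ orbit (on which $M={\rm Sp}_{a}(F)\times {\rm Sp}_{b}(F)$ carries the trivial character by default) contributes a one-dimensional summand. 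Iterating the long exact sequences coming from the Mackey filtration then yields $\dim {\rm Hom}_{H}(\pi,\mathbb C)\leq 1$, as required.

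The main obstacle is the bookkeeping in the Mackey step: tracking the $\delta_{P}^{1/2}$, the twist $\alpha$, and the identification of the two ${\rm GL}_{r}$-factors so that every $a$, $b$ and twist parameter cancels cleanly into the exponent $r-1$. Once this exponent is in hand the argument closes immediately via the long exact sequence and Lemma \ref{ext1}.
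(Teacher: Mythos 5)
Your proposal is correct and takes essentially the same approach as the paper: realize $\theta$ as the submodule in the short exact sequence $0\to\theta\to Z(\Delta_{1})\times Z(\Delta_{2})\to Z(\Delta_{3})\to 0$, bound $\dim\mathrm{Hom}_{H}(Z(\Delta_{1})\times Z(\Delta_{2}),\mathbb C)\leq 1$ by the Mackey computation (only the closed orbit $r=0$ contributes, since for $r\geq 2$ even the character on the $\mathrm{GL}_{r}$-factor comes out as $|\det|^{r-1}\neq 1$), and conclude by applying $\mathrm{Hom}_{H}(-,\mathbb C)$ together with the vanishing of $\mathrm{Ext}^{1}_{H}(\mathbb C,\mathbb C)$. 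The only cosmetic difference is that you parametrize the orbits by $r\leq 2\min(a,b)$ directly, which covers both length configurations at once, whereas the paper assumes $k\leq n$ and then invokes contragredients for $k>n$.
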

\begin{proof}
Define $\Delta_{3}=\Delta_{1}\cup\Delta_{2}$ and let $2n$ be its length. In fact, twisting it by an appropriate power of $\nu$, without loss of generality we can take $\Delta_{3}$ to be $[\nu^{-\frac{2n-1}{2}},\nu^{\frac{2n-1}{2}}]$ and hence $Z(\Delta_{3})=1$. Let $$\Delta_{1}=[\nu^{-\frac{2n-1}{2}},\nu^{\frac{a}{2}}] \ \textrm{and} \ \Delta_{2}=[\nu^{\frac{b}{2}},\nu^{\frac{2n-1}{2}}].$$Let $k=\frac{2n-1}{2}-\frac{b}{2}+1$, the length of $\Delta_{2}$. Now assume $k\leq n$.

Let $\mu_{1}=Z(\Delta_{2})$ and $\mu_{2}=Z(\Delta_{1})$. Let us first calculate ${\rm Hom}_{H}\big(\mu_{1}\times\mu_{2},\mathbb C\big)$. By eqn. \ref{general case eqn}, for $r\neq 0,{\rm Hom}_{H}\big({\rm ind}^{H}_{H_{k,r}}(\delta_{P}^{1/2}\mu_{1} \otimes \mu_{2}|_{H_{k,r}}),\mathbb C\big)\cong$ 
$${\rm Hom}_{{\rm GL}_{r}(F)\times {\rm Sp}_{\frac{(k-r)}{2}}(F)\times {\rm Sp}_{n-\frac{(k+r)}{2}}(F)}(\nu^{n-\frac{2k-r}{2}-1}\otimes\nu^{n-1-\frac{k-r}{2}}\otimes\nu^{-\frac{(k+r)}{2}}\otimes\nu^{n-k-\frac{r}{2}},\mathbb C)$$
where ${\rm GL}_{r}(F)$ acts on the last term via the contragredient. Now, consider $${\rm Hom}_{{\rm GL}_{r}(F)}(\nu^{n-\frac{2k-r}{2}-1}\otimes\nu^{-(n-k-\frac{r}{2})},\mathbb C).$$ This is non-zero only if $n-\frac{2k-r}{2}-1=n-k-\frac{r}{2}$ which is impossible since $k$ is even by the hypothesis of the lemma. So, by Theorem 7.3 of \cite{BZ1},
$${\rm Hom}_{H}\big({\rm ind}^{H}_{H_{k,r}}(\delta_{P}^{1/2}\mu_{1} \otimes \mu_{2}|_{H_{k,r}}),\mathbb C\big)=0 \ \textrm{if}\ r\neq 0.$$
On the other hand, if $r=0$ 
$${\rm Hom}_{H}\big({\rm ind}^{H}_{H_{k,0}}(\delta_{P}^{1/2}\mu_{1} \otimes \mu_{2}|_{H_{k,0}}),\mathbb C\big)={\rm Hom}_{{\rm Sp}_{\frac{k}{2}}(F)}\big(\mu_{1},\mathbb C\big)\otimes {\rm Hom}_{{\rm Sp}_{n-\frac{k}{2}}(F)}\big(\mu_{2},\mathbb C\big)=\mathbb C.$$Hence ${\rm Hom}_{H}\big(\mu_{1}\times\mu_{2},\mathbb C\big)$ is at most one dimensional. 
Now, we have the following exact sequence of ${\rm GL}_{2n}(F)$ modules (and hence of ${\rm Sp}_{n}(F)$ modules):
$$0 \rightarrow Z(\Delta_{1},\Delta_{2})\overset{i}\rightarrow Z(\Delta_{2})\times Z(\Delta_{1})\overset{j}\rightarrow \mathbb C \rightarrow 0.$$
Applying the functor ${\rm Hom}_{{\rm Sp}_{n}(F)}\big(..,\mathbb C\big)$ to it we obtain the following long exact sequence:
$$0 \rightarrow {\rm Hom}_{{\rm Sp}_{n}(F)}\big(\mathbb C, \mathbb C\big) \overset{j^{*}}\rightarrow {\rm Hom}_{{\rm Sp}_{n}(F)}\big(Z(\Delta_{2})\times Z(\Delta_{1}),\mathbb C\big) \ \ \ \ \ \ \ \ \ \ \ \ \ $$
$$\ \ \ \ \ \ \ \ \ \ \ \ \ \ \ \ \ \ \ \overset{i^{*}}\rightarrow {\rm Hom}_{{\rm Sp}_{n}(F)}\big(Z(\Delta_{1},\Delta_{2}), \mathbb C\big)\overset{r^{*}}\rightarrow {\rm Ext}_{{\rm Sp}_{n}(F)}^{1}\big(\mathbb C, \mathbb C\big)\rightarrow\ ...$$Observing that ${\rm Ext}_{{\rm Sp}_{n}(F)}^{1}\big(\mathbb C, \mathbb C\big)=0$ (see Lemma \ref{ext1}) we get the following short exact sequence:
$$0 \rightarrow \mathbb C\overset{j^{*}}\rightarrow {\rm Hom}_{{\rm Sp}_{n}(F)}\big(Z(\Delta_{2})\times Z(\Delta_{1}),\mathbb C\big)\overset{i^{*}}\rightarrow {\rm Hom}_{{\rm Sp}_{n}(F)}\big(Z(\Delta_{1},\Delta_{2}),\mathbb C\big) \rightarrow 0.$$
Since $j^{*}$ is injective, ${\rm Im}(j^{*})=\mathbb C$. By exactness, ${\rm Ker}(i^{*})=\mathbb C$ as well. Since ${\rm Hom}_{{\rm Sp}_{n}(F)}\big(Z(\Delta_{2})\times Z(\Delta_{1}),\mathbb C\big)$ was shown to be at most one dimensional, it is equal to ${\rm Ker}(i^{*})$. Thus ${\rm Im}(i^{*})=0$. But again by exactness, $i^{*}$ is surjective thus implying, $${\rm Hom}_{{\rm Sp}_{n}(F)}\big(Z(\Delta_{1},\Delta_{2}), \mathbb C\big)=0.$$
Thus we have the lemma if $k\leq n$. Since an irreducible representation has a symplectic period if and only if its contragredient has so, we have the lemma in the case $k>n$.
\end{proof}

\section{Analysis in the ${\rm GL}_{4}(F)$ case: proof of Theorem \ref{Gl4 result}}\label{analysis Gl4}
In this section we prove Theorem \ref{Gl4 result}. We begin with the following lemma.
\begin{lemma}\label{initial reduction lemma Gl4}
Let $\theta$ be an irreducible representation of ${\rm GL}_{4}(F)$ with a symplectic period. Then there exists $\pi_{i}\in Irr ({\rm GL}_{2}(F)),\ i=1,2$ such that $\theta$ appears as a quotient of $\pi_{1}\times\pi_{2}$.
\end{lemma}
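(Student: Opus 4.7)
The plan is to first constrain the cuspidal support of $\theta$ using the uniqueness of Klyachko models, and then realize $\theta$ as a quotient of a ${\rm GL}_2(F)\times{\rm GL}_2(F)$-induction via commutativity of parabolic induction on distinct cuspidal lines together with an elementary factorization lemma.

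I would first observe that $\theta$ is not supercuspidal and that its cuspidal support contains no supercuspidal of rank $3$. Irreducible supercuspidals of ${\rm GL}_n(F)$ are generic, and by Theorem \ref{strong uniqueness} a representation cannot simultaneously embed in the Whittaker and the symplectic Klyachko models. If the cuspidal support of $\theta$ were $\{\rho_3,\chi\}$ with $\rho_3\in Irr({\rm GL}_3(F))$ supercuspidal and $\chi$ a character of $F^\times$, then $\{\rho_3\}$ and $\{\chi\}$ would lie on distinct cuspidal lines, hence be unlinked, and by Theorem \ref{product of sub} the induction $\rho_3\times\chi$ would be irreducible, forcing $\theta\cong\rho_3\times\chi$; but this is fully induced from a generic datum and hence generic, contradicting $H$-distinction.

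Thus the cuspidal support of $\theta$ consists of supercuspidals of rank $\leq 2$ summing to $4$, giving three possibilities for the multiset of ranks: $(2,2)$, $(2,1,1)$, or $(1,1,1,1)$. In each case the support splits into two sub-multisets of rank-sum $2$. Since $\theta$ is non-supercuspidal, it arises as an irreducible quotient of the parabolic induction of its cuspidal support in some ordering. Any rank-$2$ supercuspidal $\rho_2$ and any character $\chi$ lie on distinct cuspidal lines, hence the segments $\{\rho_2\}$ and $\{\chi\}$ are unlinked and Theorem \ref{product of sub} gives the irreducibility (and hence commutativity) $\rho_2\times\chi\cong\chi\times\rho_2$. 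Using this, I can shuffle the factors in the chosen ordering so that the two rank-$2$ sub-multisets occupy consecutive positions; associativity of parabolic induction then exhibits $\theta$ as a quotient of $A\times B$ with both $A,B$ finite-length representations of ${\rm GL}_2(F)$. (In the case $(2,2)$ with the two rank-$2$ supercuspidals linked, Theorem \ref{product of sub} handles matters directly by pairing $\theta$ as the unique quotient of one of the two orderings.)

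The argument concludes by the elementary observation, provable by induction on composition length using exactness of parabolic induction, that an irreducible quotient of $A\times B$ with $A,B$ of finite length is itself a quotient of $\pi_1\times\pi_2$ for some irreducible subquotients $\pi_i$ of $A$ and $B$. This yields the desired $\pi_1,\pi_2\in Irr({\rm GL}_2(F))$. The main technical obstacle I anticipate is the reordering step: one must verify that $\theta$ appears as a \emph{quotient}, not merely a subquotient, of the rearranged product, which depends on the commutativity $\rho_2\times\chi\cong\chi\times\rho_2$ actually transporting the quotient structure along the isomorphism.
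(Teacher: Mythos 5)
Your proposal is correct and arrives at the same reduction as the paper, but by a somewhat different organizational route. The paper proceeds top-down: $\theta$, being non-supercuspidal, is a quotient of some $\rho\times\tau$ induced from a maximal parabolic, and the cases ${\rm GL}_{1}\times{\rm GL}_{3}$, ${\rm GL}_{3}\times{\rm GL}_{1}$, ${\rm GL}_{2}\times{\rm GL}_{2}$ are handled individually, with the Duality Lemma (Lemma \ref{duality lemma}) used to transfer the ${\rm GL}_{3}\times{\rm GL}_{1}$ case to ${\rm GL}_{1}\times{\rm GL}_{3}$, and with $\theta_{3}$ decomposed further when not cuspidal. You instead work bottom-up from the cuspidal support multiset: after ruling out a rank-$3$ supercuspidal in the support (same genericity argument), you group the support into two rank-$2$ blocks and transport $\theta$ to a quotient of $A\times B$ with $A,B$ of finite length on ${\rm GL}_{2}(F)$, invoking the commutativity $\rho_{2}\times\chi\cong\chi\times\rho_{2}$ for cuspidals on distinct lines. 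Your route avoids the Duality Lemma and replaces the paper's explicit subcase list by a single shuffling observation plus the ``drop to irreducible subquotients of a finite-length product'' induction; the paper's route is more hands-on but has the same ingredients in effect (indeed, when the paper splits $\chi_{1}\times\chi_{1}'\times\delta_{2}$ into $Z(\chi_{1}\times\chi_{1}')\times\delta_{2}$ and $Q(\chi_{1}\times\chi_{1}')\times\delta_{2}$, it is implicitly using the same induction-on-length principle you state explicitly). One minor remark: your flagged ``technical obstacle'' is not really one --- once the commutativity isomorphism is established, it automatically carries any quotient of one side to a quotient of the other by functoriality of parabolic induction, so nothing further needs to be transported.
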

\begin{proof}
If $\theta$ is a supercuspidal representation of ${\rm GL}_{4}(F)$, it is generic and hence by Theorem \ref{strong uniqueness} it doesn't have a symplectic period. Thus $\theta$ appears as a quotient of either $\chi_{1}\times\theta_{3}$, $\theta_{3}\times \chi_{1}$ or $\pi_{1}\times\pi_{2}$ (where $\chi_{1}\in Irr(\textrm{GL}_{1}(F))$, $\theta_{3}\in Irr(\textrm{GL}_{3}(F))$ and $\pi_{1},\pi_{2}\in Irr(\textrm{GL}_{2}(F))$). In the last case we have nothing left to prove. If $\theta$ is a quotient of $\theta_{3}\times \chi_{1}$, by Lemma \ref{duality lemma}, $\tilde{\theta}$ is a quotient of $\tilde{\chi_{1}}\times\tilde{\theta_{3}}$. Since an irreducible representation has symplectic period if and only if its contragredient has so, by applying Lemma \ref{duality lemma} again we are reduced to the first case. So assume $\theta$ is a quotient of $\chi_{1}\times\theta_{3}$. Now if $\theta_{3}$ is cuspidal, $\chi_{1}\times\theta_{3}$ is irreducible and generic. Hence by the disjointness of the symplectic and Whittaker models it cannot have a symplectic period. Thus assume $\theta_{3}$ isn't cuspidal.
 
Now, since $\theta_{3}$ isn't cuspidal, it is a quotient of one of the representations of the form $\chi_{1}'\times\delta_{2}$, $\delta_{2}\times\chi_{1}'$ or $\chi_{1}'\times\chi_{1}''\times\chi_{1}'''$ where $\chi_{1}',\chi_{1}'',\chi_{1}'''$ 's are characters of $\textrm{GL}_{1}(F)$ and $\delta_{2}$ is a supercuspidal of $\textrm{GL}_{2}(F)$. 

In the first case, $\chi_{1}\times\theta_{3}$ is a quotient of $\chi_{1}\times\chi_{1}'\times\delta_{2}$. If $\chi_{1}\times\chi_{1}'$ is irreducible, the lemma is proved. If not, $\chi_{1}\times\chi_{1}'\times\delta_{2}$ is glued from $Z(\chi_{1}\times\chi_{1}')\times\delta_{2}$ and $Q(\chi_{1}\times\chi_{1}')\times\delta_{2}$ where $Z(\chi_{1}\times\chi_{1}')$ and $Q(\chi_{1}\times\chi_{1}')$ are respectively the unique irreducible submodule and unique irreducible quotient of $\chi_{1}\times\chi_{1}'$. Thus any irreducible quotient of $\chi_{1}\times\theta_{3}$ has to be a quotient of one of the two. 

In the second case, since $\delta_{2}\times\chi_{1}'$ is irreducible, $\chi_{1}\times\delta_{2}\times\chi_{1}'\cong \chi_{1}\times\chi_{1}'\times\delta_{2}$. Thus we are back to the first case.

In the third case, if both $\chi_{1}\times\chi_{1}'$ and $\chi_{1}''\times\chi_{1}'''$ are irreducible we are done. In case at least one of them is reducible, we get the lemma by breaking $\chi_{1}\times\chi_{1}'\times\chi_{1}''\times\chi_{1}'''$, as in the first case, into subquotients of the required form.

\end{proof}
By this lemma, it is enough to consider representations of the form $\pi_{1}\times\pi_{2}$ where each $\pi_{i},i=1,2$ is an irreducible representation of ${\rm GL}_{2}(F)$. If $\pi=\pi_{1}\times\pi_{2}$ has an $H$-distinguished quotient, then $\pi$ itself is $H$-distinguished. By Mackey theory we get that $(\pi_{1}\times\pi_{2})|_{{\rm Sp}_{2}(F)}$ is glued from the two subquotients, $${\rm ind}^{H}_{H_{2,0}}(\delta_{P_{2,2}}^{1/2}\pi_{1}\otimes\pi_{2}|_{H_{2,0}}) \ \textrm{and} \ {\rm ind}^{H}_{H_{2,2}}(\delta_{P_{2,2}}^{1/2}\pi_{1}\otimes\pi_{2}|_{H_{2,2}}).$$Analyzing the two subquotients (using \ref{general case eqn}), it is easy to see that the necessary conditions for $\pi$ to have a symplectic period are, either $\pi_{1},\pi_{2}$ are characters of ${\rm GL}_{2}(F)$ or if $\pi_{2}\cong \nu^{-1}\pi_{1}$. Any irreducible representation of ${\rm GL}_{2}(F)$ is either a supercuspidal, a character, an irreducible principal series or a twist of the Steinberg representation. Thus any irreducible ${\rm Sp}_{4}(F)$-distinguished representation occurs as a quotient of one of the representations listed in the next proposition.
\begin{proposition}\label{analysis Gl4 proposition}
Let $\theta$ be an irreducible admissible representation of ${\rm GL}_{4}(F)$ with a symplectic period. Then $\theta$ occurs as a quotient of one of the following representations $\pi$ of ${\rm GL}_{4}(F)$:\\
1) $\pi=\chi_{2}\times\chi_{2}^{'}$ where $\chi_{2},\chi_{2}^{'}$ are characters of ${\rm GL}_{2}(F)$.\\
2) $\pi=\sigma_{2} \times\nu^{-1}\sigma_{2}$ where $\sigma_{2}$ is a supercuspidal of ${\rm GL}_{2}(F)$.\\
3) $\pi=\chi_{1}\times\chi_{1}^{'}\times\nu^{-1}\chi_{1}\times\nu^{-1}\chi_{1}^{'}$ where $\chi_{1},\chi_{1}^{'}$ are characters of $F^{\times}$ and $\chi_{1}\times\chi_{1}^{'}$ is an irreducible principal series.\\
4) $\pi=Q([\chi_{1}\nu^{-1/2},\chi_{1}\nu^{1/2}])\times Q([\chi_{1}\nu^{-3/2},\chi_{1}\nu^{-1/2}])$ where $\chi_{1}$ is a character of $F^{\times}$.\\
\end{proposition}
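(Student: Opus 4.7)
The plan is to rigorously confirm the analysis sketched in the paragraph preceding the proposition. The three logical steps are: reduce via Lemma \ref{initial reduction lemma Gl4}; extract necessary conditions by combining the Mackey filtration with equation (\ref{general case eqn}); and enumerate the resulting pairs $(\pi_1,\pi_2)$ using the Bernstein--Zelevinsky classification of $Irr({\rm GL}_2(F))$.

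Step one: by Lemma \ref{initial reduction lemma Gl4}, I may write $\theta$ as a quotient of some $\pi = \pi_1 \times \pi_2$ with $\pi_i \in Irr({\rm GL}_2(F))$. Pulling any $H$-invariant functional back through the quotient map shows that $\pi$ is itself $H$-distinguished, so it suffices to describe when such a $\pi$ can be distinguished.

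Step two: the Mackey filtration of $\pi|_H$ has just two graded pieces, ${\rm ind}^H_{H_{2,r}}(\delta_{P_{2,2}}^{1/2}(\pi_1\otimes\pi_2)|_{H_{2,r}})$ for $r=0,2$, and left exactness of ${\rm Hom}_H(-,\mathbb{C})$ forces at least one of them to be $H$-distinguished. For $r=0$ the unipotent radicals $N_1,N_2$ are trivial and $M \cong {\rm Sp}_1(F)\times{\rm Sp}_1(F)$, so equation (\ref{general case eqn}) collapses to the requirement that each $\pi_i$ be ${\rm Sp}_1(F)$-distinguished. Since generic irreducible representations of ${\rm GL}_2(F)$ cannot carry a symplectic period (Theorem \ref{strong uniqueness}), while the one-dimensional characters obviously do, this forces both $\pi_i$ to be characters, i.e.\ Case 1. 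For $r=2$ the radicals are again trivial, and $M$ is a single diagonal copy of ${\rm GL}_2(F)$ acting on $\pi_1$ naturally and on $\pi_2$ via $g\mapsto {}^{t}g^{-1}$, i.e.\ the contragredient. Incorporating the twist $\alpha=\nu^{-1}$ and Gelfand--Kazhdan yields the condition ${\rm Hom}_{{\rm GL}_2(F)}(\nu^{-1}\pi_1\otimes\tilde{\pi_2},\mathbb{C})\neq 0$, which by Schur's lemma is equivalent to $\pi_2\cong\nu^{-1}\pi_1$.

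Step three: enumerate $\pi_1$ by type. If $\pi_2\cong\nu^{-1}\pi_1$, then a supercuspidal $\pi_1=\sigma_2$ gives Case 2; an irreducible principal series $\pi_1=\chi_1\times\chi_1'$ gives, by associativity of parabolic induction, $\pi = \chi_1\times\chi_1'\times\nu^{-1}\chi_1\times\nu^{-1}\chi_1'$, i.e.\ Case 3; a twist of Steinberg $\pi_1=Q([\chi_1\nu^{-1/2},\chi_1\nu^{1/2}])$ gives $\pi_2=Q([\chi_1\nu^{-3/2},\chi_1\nu^{-1/2}])$, i.e.\ Case 4; and a character $\pi_1$ forces $\pi_2$ to be a character as well, falling back to Case 1. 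The most delicate technical point in the whole argument is identifying precisely the diagonal ${\rm GL}_r$-action inside $H_{k,r}$ for $r=2$ (and the appearance of $\nu^{-1}$), so that the condition coming from equation (\ref{general case eqn}) reads cleanly as $\pi_2\cong\nu^{-1}\pi_1$; once this is set up correctly, the remaining enumeration is purely bookkeeping.
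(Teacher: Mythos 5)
Your proof is correct and follows the same route the paper sketches in the paragraph preceding the proposition: reduce via Lemma~\ref{initial reduction lemma Gl4}, apply the two-orbit Mackey filtration of $(\pi_1\times\pi_2)|_{{\rm Sp}_2(F)}$, use equation~(\ref{general case eqn}) and the Gelfand--Kazhdan identification to read off the two necessary conditions (both $\pi_i$ characters from the $r=0$ piece, $\pi_2\cong\nu^{-1}\pi_1$ from the $r=2$ piece), and then enumerate $\pi_1$ by the classification of $Irr({\rm GL}_2(F))$. You have simply made explicit the steps the paper compresses into ``Analyzing the two subquotients\dots it is easy to see that\dots''.
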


Now we come to the theorem in the ${\rm GL}_{4}(F)$ case. We state and prove an equivalent version of Theorem \ref{Gl4 result} in terms of Zelevinsky classification.
\begin{theorem}\label{Gl4 result1}
Following is the complete list of irreducible admissible representations $\theta$ of ${\rm GL}_{4}(F)$ with a symplectic period:\\
1) $\theta=Z([\sigma_{2},\nu\sigma_{2}])$ where $\sigma_{2}$ is a cuspidal representation of ${\rm GL}_{2}(F)$.\\ 
2) $\theta=Z(\Delta_{1}, \Delta_{2})$ where $\Delta_{1}=[\chi_{1}\nu^{-1/2},\chi_{1}\nu^{1/2}]$ and $\Delta_{2}=[\chi_{1}\nu^{-3/2},\chi_{1}\nu^{-1/2}]$. ($\chi_{1}$ is a character of $F^{\times}$)\\ 
3) $\theta=$ character of ${\rm GL}_{4}(F)$.\\
4) $\theta=\chi_{2}\times \chi_{2}^{'}$ where $\chi_{2},\chi_{2}^{'}$ are characters of ${\rm GL}_{2}(F)$.
\end{theorem}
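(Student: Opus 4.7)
The plan is to combine Proposition \ref{analysis Gl4 proposition} with a case-by-case analysis. By that proposition, every irreducible ${\rm Sp}_2(F)$-distinguished representation $\theta$ of ${\rm GL}_4(F)$ arises as an irreducible quotient of one of four explicitly described parabolically induced representations $\pi$, so I would enumerate the irreducible quotients of each such $\pi$ and test each one for a symplectic period.

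First I would confirm that every entry in the theorem's list is ${\rm Sp}_2(F)$-distinguished. Case 3 (characters of ${\rm GL}_4(F)$) is immediate. Case 4 and Case 1, via the identifications $\chi_2 \cong U(\chi,2)$ and $Z([\sigma_2,\nu\sigma_2]) \cong U(\nu^{1/2}\sigma_2,2)$, both follow from Proposition \ref{main prop}. Case 2 is covered by Lemma \ref{existence of symplectic period lemma}, since $\Delta_1$ and $\Delta_2$ both have even length two and their intersection $\{\chi_1\nu^{-1/2}\}$ has odd length one.

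For the converse I would go through the four types of $\pi$ in Proposition \ref{analysis Gl4 proposition}, reading off the composition series via Theorems \ref{product of sub} and \ref{product of quotient}. For $\pi = \chi_2 \times \chi_2'$ the two associated length-two segments are either unlinked (so $\pi$ is already irreducible and gives Case 4 directly), overlap at a single cuspidal representation (one irreducible piece is $Z(\Delta_1,\Delta_2)$, which matches Case 2; the other piece is $Z(\Delta_3)\times Z(\Delta_4)$ with segments of odd lengths 3 and 1, which carries a mixed Klyachko model by Theorem 3.7 of \cite{Offen-Sayag2} and is thus excluded by Theorem \ref{strong uniqueness}), or are juxtaposed (one piece is a character of ${\rm GL}_4(F)$ = Case 3; the other is a juxtaposed multi-segment ruled out by Lemma \ref{juxt no symplectic period lemma}). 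For $\pi = \sigma_2 \times \nu^{-1}\sigma_2$, Theorem \ref{product of sub} yields the single irreducible quotient $Z([\nu^{-1}\sigma_2,\sigma_2])$, which is a twist of Case 1. For $\pi = Q(\Delta_1)\times Q(\Delta_2)$ as in Case 4 of the proposition, Theorem \ref{product of quotient} identifies the submodule as $\chi_1\nu^{-1/2}{\rm St}_3 \times \chi_1\nu^{-1/2}$, which is generic and hence not distinguished; the unique irreducible quotient is the Speh representation $U(\chi_1\nu^{-1/2}{\rm St}_2,2)$, which in Zelevinsky's language is exactly the $Z(\Delta_1,\Delta_2)$ of Case 2.

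The main obstacle is Case 3 of the proposition, $\pi = \chi_1 \times \chi_1' \times \nu^{-1}\chi_1 \times \nu^{-1}\chi_1'$, which branches into several subcases depending on the relative cuspidal lines of $\chi_1$ and $\chi_1'$ (distinct lines, $\chi_1' = \chi_1$, or $\chi_1' = \chi_1\nu^k$ with $|k|\geq 2$). I would exploit the irreducibility of $\chi_1 \times \chi_1'$ to commute factors, regroup $\pi$ into products of length-two reducible inductions such as $\chi_1 \times \nu^{-1}\chi_1$, and repeatedly apply Theorem \ref{product of sub} to track all irreducible quotients. Each such quotient must then be verified to be either a product of characters of ${\rm GL}_2(F)$ (Case 4), a character of ${\rm GL}_4(F)$ (Case 3), the multi-segment of Case 2, or else excluded for being generic (by Theorem \ref{strong uniqueness}), juxtaposed (by Lemma \ref{juxt no symplectic period lemma}), or carrying a non-symplectic Klyachko model. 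Organizing this combinatorial case analysis cleanly, especially in the $\chi_1' = \chi_1$ subcase where the cuspidal support collapses onto a single line, is the bulk of the remaining work.
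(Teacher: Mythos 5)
Your proposal is correct and follows essentially the same route as the paper: enumerate the irreducible quotients of the four representations listed in Proposition \ref{analysis Gl4 proposition} via Theorems \ref{product of sub} and \ref{product of quotient}, then test each for a symplectic period using Proposition \ref{main prop}, Lemma \ref{existence of symplectic period lemma}, Lemma \ref{juxt no symplectic period lemma}, and the disjointness of Klyachko models from Theorem \ref{strong uniqueness}. Your treatment of Case~3 of Proposition \ref{analysis Gl4 proposition} (the fully induced principal series $\chi_1\times\chi_1'\times\nu^{-1}\chi_1\times\nu^{-1}\chi_1'$) is left as an outline rather than carried out, but the plan you describe — commuting factors using irreducibility of $\chi_1\times\chi_1'$, regrouping into length-two reducible pieces, and showing every unique irreducible quotient is already accounted for by the $\chi_2\times\chi_2'$ analysis — is precisely what the paper executes in its subcases (1a), (1b), (2a), and (2b) of that case.
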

\begin{proof}
The strategy of the proof is to consider each representation in the list of Proposition \ref{analysis Gl4 proposition} and to check for all irreducible quotients of each one of them, whether or not, they have a symplectic period. 
\paragraph{{\bf Case 1: $\pi=\chi_{2}\times\chi_{2}^{'}$ .}}
If $\chi_{2}\times\chi_{2}^{'}$ is irreducible, $\theta=\pi$ has a symplectic period by Proposition \ref{main prop}. So assume otherwise. Let $\chi_{2}= Z([\chi_{1}\nu^{-1/2},\chi_{1}\nu^{1/2}])$ and $\chi_{2}^{'}= Z([\chi_{1}^{'}\nu^{-1/2},\chi_{1}^{'}\nu^{1/2}])$. This has following four sub-cases.
\paragraph{1) $\chi_{1}=\chi_{1}^{'}\nu$}
In this case, $\pi=Z([\chi_{1}^{'}\nu^{1/2},\chi_{1}^{'}\nu^{3/2}])\times Z([\chi_{1}^{'}\nu^{-1/2},\chi_{1}^{'}\nu^{1/2}])$. By Theorem \ref{product of sub}, it has a unique irreducible quotient which is $\newline$$\theta=Z([\chi_{1}^{'}\nu^{-1/2},\chi_{1}^{'}\nu^{3/2}])\times \chi_{1}^{'}\nu^{1/2}$. By Theorem 3.7 of \cite{Offen-Sayag2}, it has a mixed Klyachko model. Hence by Theorem \ref{strong uniqueness}, this doesn't have a symplectic period. 
\paragraph{2) $\chi_{1}=\chi_{1}^{'}\nu^{-1}$}
In this case, $\pi=Z([\chi_{1}^{'}\nu^{-3/2},\chi_{1}^{'}\nu^{-1/2}])\times Z([\chi_{1}^{'}\nu^{-1/2},\chi_{1}^{'}\nu^{1/2}])$. By Theorem \ref{product of sub}, this has a unique irreducible quotient $\newline$$\theta=\chi_{1}^{'}Z([\nu^{-3/2},\nu^{-1/2}],[\nu^{-1/2},\nu^{1/2}])$ which has a symplectic period by Lemma \ref{existence of symplectic period lemma}. Note that $\theta$ is a twist of $U(St_{2},2)$ and the fact that it has a symplectic period also follows from Proposition \ref{main prop}.
\paragraph{3) $\chi_{1}=\chi_{1}^{'}\nu^{2}$}
In this case, $\pi=Z([\chi_{1}^{'}\nu^{3/2},\chi_{1}^{'}\nu^{5/2}])\times Z([\chi_{1}^{'}\nu^{-1/2},\chi_{1}^{'}\nu^{1/2}])$. This has a unique irreducible quotient $\theta=Z([\chi_{1}^{'}\nu^{-1/2},\chi_{1}^{'}\nu^{5/2}])$. Thus $\theta$ is the character $\chi_{1}\nu$ of ${\rm GL}_{4}(F)$ and has a symplectic period.
\paragraph{4) $\chi_{1}=\chi_{1}^{'}\nu^{-2}$}
In this case, $\pi=Z([\chi_{1}^{'}\nu^{-5/2},\chi_{1}^{'}\nu^{-3/2}])\times Z([\chi_{1}^{'}\nu^{-1/2},\chi_{1}^{'}\nu^{1/2}])$ which by Theorem \ref{product of sub}, has a unique irreducible quotient $\newline$$\theta=Z([\chi_{1}^{'}\nu^{-5/2},\chi_{1}^{'}\nu^{-3/2}],[\chi_{1}^{'}\nu^{-1/2},\chi_{1}^{'}\nu^{1/2}])$. By Lemma \ref{juxt no symplectic period lemma}, it doesn't have a symplectic period.

\paragraph{{\bf Case 2: $\pi=\sigma_{2} \times\nu^{-1}\sigma_{2}$.}}
In this case, $\pi$ has a unique irreducible quotient $U(\nu^{-1/2}\sigma_{2},2)\cong Z([\nu^{-1}\sigma_{2},\sigma_{2}])$. By Proposition \ref{main prop} it has a symplectic period.

\paragraph{{\bf Case 3: $\pi=\chi_{1}\times\chi_{1}^{'}\times\nu^{-1}\chi_{1}\times\nu^{-1}\chi_{1}^{'}$ where $\chi_{1}\times\chi_{1}^{'}$ is irreducible.}}
There are two further sub-cases:
\paragraph{{\bf 1) $\chi_{1}^{'}\times\chi_{1}\nu^{-1}$ is irreducible.}}
This again can be broken down into two sub-cases. 
\paragraph{1a) $\chi_{1}^{'}\neq\chi_{1}\nu^{2}$ }
In this case, $\pi \cong \chi_{1}\times\nu^{-1}\chi_{1}\times\chi_{1}^{'}\times\nu^{-1}\chi_{1}^{'}$. Under the given assumptions the ``does not precede'' condition (see Section \ref{Preliminaries on segments}) is satisfied and so $\pi$ has a unique irreducible quotient. Clearly, $\pi$ has $Z([\nu^{-1}\chi_{1},\chi_{1}]) \times Z([\nu^{-1}\chi_{1}^{'},\chi_{1}^{'}])$ as a quotient. If its irreducible, it has a symplectic period by Proposition \ref{main prop} and has already been accounted for in case 1. So assume the contrary. In that case the segments are linked. But the assumptions that $\chi_{1}^{'}\times\chi_{1}\nu^{-1}$ is irreducible together with $\chi_{1}^{'}\neq\chi_{1}\nu^{2}$ forces a contradiction. Hence irreducibility of $Z([\nu^{-1}\chi_{1},\chi_{1}]) \times Z([\nu^{-1}\chi_{1}^{'},\chi_{1}^{'}])$ is the only possibility.
\paragraph{1b) $\chi_{1} ^{'}= \chi_{1}\nu^{2}$ } 
In this case, $\pi\cong\chi_{1}^{'}\nu^{-2}\times\chi_{1}^{'}\times\chi_{1}^{'}\nu^{-3}\times\chi_{1}^{'}\nu^{-1}\cong \chi_{1}^{'}\nu^{-2}\times\chi_{1}^{'}\nu^{-3}\times\chi_{1}^{'}\times\chi_{1}^{'}\nu^{-1}$. Note that this representation has $\tau=Z([\chi_{1}^{'}\nu^{-3},\chi_{1}^{'}\nu^{-2}])\times Z([\chi_{1}^{'}\nu^{-1},\chi_{1}^{'}])$ as a quotient. Since the cuspidal support of $\pi$ is multiplicity free it has a unique irreducible quotient (by Proposition 2.10 of \cite{Z}) and so any $H$-distinguished irreducible quotient of $\pi$ is a quotient of $\tau$. Thus they have already been accounted for in case 1 of this proof. 

\paragraph{{\bf 2) $\chi_{1}^{'} \times \chi_{1}\nu^{-1}$ is reducible.}}
The condition is satisfied if and only if either $\chi_{1}=\chi_{1}^{'}$ or $\chi_{1}= \chi_{1}^{'}\nu^{2}$. Again, we will deal with the cases separately. 
\paragraph{2a) $\chi_{1} = \chi_{1}^{'}$ }
The representation is of the form $\chi_{1}\times\chi_{1}\times\chi_{1}\nu^{-1}\times\chi_{1}\nu^{-1}$. Since it satisfies the ``does not precede'' condition (see Section \ref{Preliminaries on segments}) it has a unique irreducible quotient. It can be easily seen that $\theta=Z([\chi_{1}\nu^{-1},\chi_{1}]) \times Z([\chi_{1}\nu^{-1},\chi_{1}])$ is an irreducible quotient of this representation (and so is the unique one). $\theta$ has symplectic period and has already been accounted for in case 1 of this proof.
\paragraph{2b) $\chi_{1} = \chi_{1}^{'}\nu^{2} $}
In this case, $\pi\cong\chi_{1}^{'}\nu^{2}\times\chi_{1}^{'}\times\chi_{1}^{'}\nu\times\chi_{1}^{'}\nu^{-1}\cong \chi_{1}^{'}\times\chi_{1}^{'}\nu^{-1}\times\chi_{1}^{'}\nu^{2}\times\chi_{1}^{'}\nu$. By an argument similar to the one used in case 3.1.b we conclude that this representation has already been accounted for in case 1 of this proof.

\paragraph{{\bf Case 4: $\pi=Q([\chi_{1}\nu^{-1/2},\chi_{1}\nu^{1/2}])\times Q([\chi_{1}\nu^{-3/2},\chi_{1}\nu^{-1/2}])$}}
By Theorem \ref{product of quotient}, $\pi$ has a unique irreducible quotient $\chi_{1}Q([\nu^{-3/2},\nu^{-1/2}],[\nu^{-1/2},\nu^{1/2}])$. As mentioned earlier in case 1.4, it is a twist of $U(St_{2},2)$ and has a symplectic period (by Proposition \ref{main prop}).

\end{proof}

\section{Analysis in the ${\rm GL}_{6}(F)$ case}\label{Analysis of}
In this section we obtain the theorem for ${\rm GL}_{6}(F)$. The following lemma reduces the analysis to representations of the form $\pi_{1}\times\pi_{2}\times\pi_{3}$ where the $\pi_{i}$'s are irreducible representations of ${\rm GL}_{2}(F)$.
\begin{lemma}\label{initial reduction lemma}
Let $\theta$ be an irreducible representation of ${\rm GL}_{6}(F)$ with a symplectic period. Then either $\theta$ is of the form $Z([\sigma_{3},\nu\sigma_{3}])$, where $\sigma_{3}$ is a supercuspidal representation of ${\rm GL}_{3}(F)$ or it occurs as a subquotient of a representation of the form $\pi_{1}\times \pi_{2}\times \pi_{3}$ where $\pi_{i}\in Irr({\rm GL}_{2}(F))$ for $i=1,2,3$.
\end{lemma}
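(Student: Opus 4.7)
The plan is to classify $\theta$ by analyzing its cuspidal support, using Lemma \ref{irr} as the main tool. I begin by noting that $\theta$ cannot be supercuspidal (cuspidals are generic, hence not $H$-distinguished by Theorem \ref{strong uniqueness}), so the cuspidal support of $\theta$ is a multiset of cuspidals on various ${\rm GL}_{n_i}(F)$'s with $\sum_i n_i = 6$ and more than one summand. I split the argument according to the partition type $\{n_i\}$.

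If every $n_i \in \{1,2\}$, I group the cuspidals into three pairs of total rank 2; each pair yields either a single ${\rm GL}_2(F)$ cuspidal, or an induced $\chi \times \chi'$ (possibly reducible). By associativity, the full induced representation from the cuspidal support decomposes as $\Pi_1 \times \Pi_2 \times \Pi_3$ with each $\Pi_j$ a representation of ${\rm GL}_2(F)$. Since $\theta$ embeds in this induction, refining the filtration using the composition series of any reducible $\Pi_j$ and invoking the exactness of parabolic induction shows $\theta$ is a subquotient of some $\pi_1 \times \pi_2 \times \pi_3$ with each $\pi_i \in Irr({\rm GL}_2(F))$, which is the desired conclusion.

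If some $n_i \geq 3$, the possible partitions are $(5,1), (4,2), (4,1,1), (3,3), (3,2,1), (3,1,1,1)$. In each case, isolating a cuspidal $\rho$ on ${\rm GL}_n(F)$ with $n \geq 3$, the singleton segment $\{\rho\}$ does not link with any segment on a different ${\rm GL}_m(F)$, so Proposition 8.5 of \cite{Z} realizes $\theta$ as an irreducible product $\rho \times \theta'$ for some $\theta' \in Irr({\rm GL}_{6-n}(F))$. The partitions $(5,1), (3,2,1), (3,1,1,1)$ have an odd summand and are immediately killed by Lemma \ref{irr}. The partitions $(4,2)$ and $(4,1,1)$ would force, by Lemma \ref{irr}, the ${\rm GL}_4(F)$ supercuspidal $\rho$ to be $H$-distinguished, contradicting its genericity via Theorem \ref{strong uniqueness}.

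The main obstacle is the partition $(3,3)$: here $\theta$ has cuspidal support $\{\sigma_3, \sigma_3'\}$ with both cuspidals on ${\rm GL}_3(F)$. When the segments $\{\sigma_3\}$ and $\{\sigma_3'\}$ are not linked, $\theta = \sigma_3 \times \sigma_3'$ is irreducible, and Lemma \ref{irr} gives a contradiction since both dimensions 3 are odd. When they are linked, without loss of generality $\sigma_3' = \nu\sigma_3$, and by Theorem \ref{product of sub} the induced $\sigma_3 \times \nu\sigma_3$ has length 2 with unique irreducible submodule $Z([\sigma_3, \nu\sigma_3])$ and an irreducible quotient which is the generic Zelevinsky constituent. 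The generic quotient has no symplectic period by Theorem \ref{strong uniqueness}, so $\theta$ must equal $Z([\sigma_3, \nu\sigma_3])$, yielding precisely the exceptional case in the lemma.
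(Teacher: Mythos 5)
Your proof is correct, but it takes a genuinely different route from the paper's. The paper's proof proceeds by iteratively peeling off a maximal parabolic, organizing by the size $k$ of the first block ($k=1,2,3$), and its key step — the case where one factor is a ${\rm GL}_3(F)$-cuspidal — is handled by an explicit Mackey-theoretic computation via Eqn.~\eqref{general case eqn}: the contribution of the non-Lagrangian orbit $H_{3,1}$ vanishes because the Jacquet module of the cuspidal dies, and the Lagrangian orbit $H_{3,3}$ forces $\tau_{2}' \cong \nu^{-1}\tau_{1}$ (via Gelfand--Kazhdan), after which $\theta = Z([\nu^{-1}\tau_1,\tau_1])$. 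You instead classify directly by the partition type of the cuspidal support of $\theta$, dispatch all partitions with a part $\geq 3$ other than $(3,3)$ using Lemma~\ref{irr} (parity of the factors), and handle $(3,3)$ with pure segment theory: Theorem~\ref{product of sub} to identify the two constituents of $\sigma_3 \times \nu\sigma_3$ and Theorem~\ref{strong uniqueness} to eliminate the generic one. This is cleaner and more uniform — Lemma~\ref{irr} already packages the Mackey information, so you never need to rerun the orbit computation — at the price of leaning harder on the Zelevinsky classification (every irreducible is $Z(\mathfrak a)$) and Proposition~8.5 to split off the singleton segment $\{\rho\}$ as an irreducible product. One small hole to patch: in the unlinked $(3,3)$ subcase where $\sigma_3 = \sigma_3'$, the two singleton segments coincide and the disjointness hypothesis of Lemma~\ref{irr} fails, so you should instead observe directly that $\sigma_3 \times \sigma_3$ is irreducible and generic and apply Theorem~\ref{strong uniqueness}. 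With that touch-up, the argument is complete.
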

\begin{proof}
Since supercuspidal representations are generic, they don't have a symplectic period. Thus $\theta$ appears as a subquotient of $\tau_{1}\times\tau_{2}$ where $\tau_{1}$ and $\tau_{2}$ are irreducible representations of $\textrm{GL}_{k}(F)$ and $\textrm{GL}_{6-k}(F)$ respectively (for $k\leq 3$).
\paragraph{{\bf Case 1: $k=1$ }}
If $\tau_{2}$ is a cuspidal representation of ${\rm GL}_{5}(F)$, since $\tau_{1}$ is a character, $\tau_{1}\times\tau_{2}$ is irreducible and generic. Thus $\tau_{2}$ occurs as a subquotient of a representation induced from a maximal parabolic of ${\rm GL}_{5}(F)$. So $\theta$ is either a subquotient of $\tau_{1}\times\chi\times\tau\ (\chi\in Irr({\rm GL}_{1}(F)),\ \tau\in Irr({\rm GL}_{4}(F)))$ or $\tau_{1}\times\tau^{'}\times\tau^{''}\ (\tau^{'}\in Irr({\rm GL}_{2}(F)),\ \tau^{''}\in Irr({\rm GL}_{3}(F)))$. Thus $\theta$ is either a subquotient of $\theta_{1}\times\tau$ (where $\theta_{1}\in \ Irr({\rm GL}_{2}(F))$) or of $\theta_{2}\times\tau^{''}$ (where $\theta_{2}\in \ Irr({\rm GL}_{3}(F))$) thus reducing the lemma to the next two cases.
\paragraph{{\bf Case 2: $k=2$ }}
If $\tau_{2}$ is a cuspidal representation of ${\rm GL}_{4}(F)$, $\tau_{1}\times\tau_{2}$ is irreducible and doesn't have a symplectic period by Lemma \ref{irr}. Thus as earlier, $\tau_{2}$ occurs as a subquotient of a representation induced from a maximal parabolic of ${\rm GL}_{4}(F)$. So $\theta$ is either a subquotient of $\tau_{1}\times\chi\times\tau\ (\chi\in Irr({\rm GL}_{1}(F)),\ \tau\in Irr({\rm GL}_{3}(F)))$ or $\tau_{1}\times\tau^{'}\times\tau^{''}\ (\tau^{'}\in Irr({\rm GL}_{2}(F)),\ \tau^{''}\in Irr({\rm GL}_{2}(F)))$. In the first scenario $\theta$ occurs as a subquotient of $\theta_{1}\times\theta_{2}$ (where $\theta_{1},\ \theta_{2}\in Irr({\rm GL}_{3}(F))$) (thus reducing the lemma to case 3) while in the second we have the lemma.
\paragraph{{\bf Case 3: $k=3$ }}
We will first show that in case either of $\tau_{1},\tau_{2}$ (say $\tau_{1}$) is cuspidal then $\theta$ is of the form $Z([\sigma_{3},\nu\sigma_{3}])$. Choose $\tau_{2}^{'} \in Irr({\rm GL}_{3}(F))$ such that $\theta$ is a quotient of either $\tau_{1}\times\tau_{2}^{'}$ or $\tau_{2}^{'}\times\tau_{1}$. Assume the former. Then $\tau_{1}\times\tau_{2}^{'}$ also has a non-trivial ${\rm Sp}_{3}(F)$-invariant linear form. Now, $\tau_{1}\times\tau_{2}^{'}|_{{\rm Sp}_{3}(F)}$ is glued from 
$${\rm ind}^{H}_{H_{3,3}}(\delta_{P_{3,3}}^{1/2}\tau_{1}\otimes\tau_{2}'|_{H_{3,3}}) \ \textrm{and} \ {\rm ind}^{H}_{H_{3,1}}(\delta_{P_{3,3}}^{1/2}\tau_{1}\otimes\tau_{2}'|_{H_{3,1}}).$$
Since $\tau_{1}$ is cuspidal, by Eqn. \ref{general case eqn},
$${\rm Hom}_{H}({\rm ind}^{H}_{H_{3,1}}(\delta_{P_{3,3}}^{1/2}\tau_{1}\otimes\tau_{2}'|_{H_{3,1}}),\mathbb C)=0.$$
So, $${\rm Hom}_{H}({\rm ind}^{H}_{H_{3,3}}(\delta_{P_{3,3}}^{1/2}\tau_{1}\otimes\tau_{2}'|_{H_{3,3}}),\mathbb C)\neq 0$$ which is true iff $\tau_{2}^{'}=\nu^{-1}\tau_{1}$, again by Eqn. \ref{general case eqn} and a theorem of Gelfand and Kazhdan (Theorem 7.3 of \cite{BZ1}). Thus $\theta=Z([\nu^{-1}\tau_{1},\tau_{1}])$. In case, $\theta$ is a quotient of $\tau_{2}^{'}\times\tau_{1}$ replacing $\theta$ by $\tilde{\theta}$ gives us the desired result.

Thus assume now that none of the two are cuspidal. Then $\exists \ \chi_{i}$, $\theta^{'}_{i}$ ($i=1,2$) such that $\tau_{i}$ is a subquotient of $\chi_{i}\times\theta^{'}_{i}$ (where $\chi_{i}\in Irr({\rm GL}_{1}(F)),\ \theta_{i}^{'}\in Irr({\rm GL}_{2}(F))$). Thus $\theta$ is a subquotient of $\chi_{1}\times\chi_{2}\times\theta^{'}_{1}\times\theta^{'}_{2}$ and hence the lemma is proved.  
\end{proof}
Next we prove a hereditary property for ${\rm GL}_{6}(F)$ using the classification theorem for ${\rm GL}_{4}(F)$.  
\begin{proposition}\label{hereditary for Gl6}
Let $\pi_{1} \in Irr({\rm GL}_{2}(F))$ and $\pi_{2} \in Irr({\rm GL}_{4}(F))$ be two irreducible representations with symplectic periods. Then $\pi_{1}\times\pi_{2}$ has a symplectic period. Similarly, if $\pi_{i},\ i=1,2,3$ are irreducible representations of ${\rm GL}_{2}(F)$ with a symplectic period, then $\pi_{1}\times\pi_{2}\times\pi_{3}$ has a symplectic period.
\end{proposition}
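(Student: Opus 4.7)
The plan is to deduce both assertions directly from the classification in Theorem \ref{Gl4 result1} together with Proposition \ref{main prop}. The preliminary observation I would use is that the only irreducible admissible representations of ${\rm GL}_2(F)$ carrying a symplectic period are characters of ${\rm GL}_2(F)$: every irreducible representation of ${\rm GL}_2(F)$ other than a character (supercuspidal, irreducible principal series, or twist of the Steinberg) is generic, and a generic representation cannot be ${\rm Sp}_1(F)$-distinguished by Theorem \ref{strong uniqueness} combined with the disjointness of the Whittaker and symplectic Klyachko models; conversely, any character of ${\rm GL}_2(F)$ is trivial on ${\rm SL}_2(F) = {\rm Sp}_1(F)$, so is tautologically symplectic-distinguished.

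For the first assertion, $\pi_1$ must therefore be a character of ${\rm GL}_2(F)$, which already fits the format $\chi U(1_1, 2)$ of Proposition \ref{main prop} (since $U(1_1, 2) \cong 1_2$). I would then walk through the four classes of $\pi_2$ listed in Theorem \ref{Gl4 result1} and in each case realise $\pi_2$ as a product of factors of the form $\chi U(\delta, 2m)$ with $\delta$ square integrable: $Z([\sigma_2, \nu\sigma_2]) \cong \nu^{1/2} U(\sigma_2, 2)$; $Z(\Delta_1, \Delta_2)$ is a twist of $U(St_2, 2)$ (as already noted in case (2) of the proof of Theorem \ref{Gl4 result1}); a character $\eta$ of ${\rm GL}_4(F)$ equals $\eta \cdot U(1_1, 4)$; and $\chi_2 \times \chi_2' = \chi_2 U(1_1, 2) \times \chi_2' U(1_1, 2)$. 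In each of the four cases $\pi_1 \times \pi_2$ is then a product of blocks of the required form, so Proposition \ref{main prop} yields the symplectic period.

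The second assertion is immediate from the same preliminary observation: each $\pi_i$ is a character of ${\rm GL}_2(F)$, hence of the form $\chi_i U(1_1, 2)$, and $\pi_1 \times \pi_2 \times \pi_3$ is then directly of the shape covered by Proposition \ref{main prop}. The whole argument is essentially bookkeeping; the only genuinely substantive step is the matching of Theorem \ref{Gl4 result1}'s list against the $\chi U(\delta, 2m)$ format, and this is handled by elementary manipulations with Zelevinsky segments.
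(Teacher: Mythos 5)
Your proof is correct and follows essentially the same route as the paper's: observe that ${\rm Sp}_1(F)$-distinguished irreducibles of ${\rm GL}_2(F)$ are exactly the characters, read off the ${\rm GL}_4(F)$ list from Theorem \ref{Gl4 result1}, and invoke Proposition \ref{main prop}. The only difference is that you spell out the matching of each class in Theorem \ref{Gl4 result1} to the $\chi U(\delta, 2m)$ format, which the paper leaves implicit.
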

\begin{proof}
Any irreducible representation $\pi$ of ${\rm GL}_{2}(F)$ having a symplectic period is a character while by Theorem \ref{Gl4 result} any such representation of ${\rm GL}_{4}(F)$ is either a character, irreducible product of two characters of ${\rm GL}_{2}(F)$ or a representation of the form $U(\delta,2)$. The proposition now follows from Proposition \ref{main prop}.  
\end{proof}

The following lemma is a consequence of Lemma \ref{irr} and the fact that cuspidal representations are generic (and hence not symplectic).
\begin{lemma}\label{no cuspidal lemma}
Let $\pi_{i},(1\leq i \leq 3)$ be irreducible admissible representations of ${\rm GL}_{2}(F)$. If one or more of the $\pi_{i}$'s are cuspidal and $\theta$ is an ${\rm Sp}_{3}(F)$-distinguished subquotient of $\pi=\pi_{1}\times\pi_{2}\times\pi_{3}$ then it is of the form $\chi_{2}\times Z([\sigma_{2},\nu\sigma_{2}])$ where $\chi_{2}$ and $\sigma_{2}$ are a character and a supercuspidal of ${\rm GL}_{2}(F)$ respectively.
\end{lemma}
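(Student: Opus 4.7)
Without loss of generality I would take $\pi_1$ to be the cuspidal factor and set $\sigma_2 := \pi_1$, a supercuspidal representation of ${\rm GL}_2(F)$. Writing $\theta = Z(\mathfrak{a})$ in Zelevinsky's parameterization, its cuspidal support coincides with that of $\pi_1 \times \pi_2 \times \pi_3$ and in particular contains the ${\rm GL}_2$-cuspidal $\sigma_2$. The first step is to split $\mathfrak{a} = \mathfrak{a}_{\sigma_2} \sqcup \mathfrak{a}'$, where $\mathfrak{a}_{\sigma_2}$ collects the segments supported on the cuspidal line of $\sigma_2$. Since the two parts lie on distinct cuspidal lines, their segments are pairwise disjoint and unlinked, so $\theta \cong Z(\mathfrak{a}_{\sigma_2}) \times Z(\mathfrak{a}')$ and this product is irreducible. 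Because $\theta$ has a symplectic period, Lemma \ref{irr} forces each factor to be symplectically distinguished and to live on an even-degree ${\rm GL}$. Writing $2k_1$ for the degree of $Z(\mathfrak{a}_{\sigma_2})$, the nonemptiness of $\mathfrak{a}_{\sigma_2}$ gives $2k_1 \in \{2,4,6\}$.

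The cases $2k_1 = 2$ and $2k_1 = 4$ are handled rapidly. If $2k_1 = 2$, then $Z(\mathfrak{a}_{\sigma_2}) \cong \nu^a\sigma_2$ is cuspidal on ${\rm GL}_2(F)$ and hence generic, so Theorem \ref{strong uniqueness} forbids a symplectic period, contradicting Lemma \ref{irr}. If $2k_1 = 4$, then $Z(\mathfrak{a}_{\sigma_2})$ is an $H$-distinguished irreducible ${\rm GL}_4(F)$-representation whose cuspidal support consists of two ${\rm GL}_2$-cuspidals on the $\sigma_2$-line; inspection of Theorem \ref{Gl4 result1} shows that cases (2)--(4) carry character-type (${\rm GL}_1$-cuspidal) support, so only case (1) can apply, forcing $Z(\mathfrak{a}_{\sigma_2}) \cong Z([\nu^a\sigma_2, \nu^{a+1}\sigma_2])$ for some $a$. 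Absorbing the twist by relabeling $\sigma_2$, the residual factor $Z(\mathfrak{a}')$ becomes an $H$-distinguished ${\rm GL}_2(F)$-representation, hence a character $\chi_2$, and we obtain $\theta \cong \chi_2 \times Z([\sigma_2, \nu\sigma_2])$ in the desired form.

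The main obstacle will be the case $2k_1 = 6$, in which $\mathfrak{a}' = \emptyset$ and the whole cuspidal support of $\theta$ lies on the $\sigma_2$-line. This forces $\pi_2, \pi_3$ to be cuspidal representations of ${\rm GL}_2(F)$ of the shape $\nu^{c_i}\sigma_2$, since non-cuspidal irreducibles of ${\rm GL}_2(F)$ have character-type cuspidal support, incompatible with $\sigma_2$'s line. To rule out this configuration I would compute ${\rm Hom}_H(\pi, \mathbb C)$ directly via Mackey theory, viewing $\pi = \pi_1 \times (\pi_2 \times \pi_3)$ as induced from $P_{2,4}$. Formula \ref{general case eqn} yields two orbital contributions ($r = 0$ and $r = 2$): the $r = 0$ term carries a factor ${\rm Hom}_{{\rm Sp}_1(F)}(\pi_1, \mathbb C)$ that vanishes by genericity of the cuspidal $\pi_1$, while for $r = 2$ the geometric lemma presents $r_{(2,2)}(\pi_2 \times \pi_3)$ as $(\pi_2 \otimes \pi_3) \oplus (\pi_3 \otimes \pi_2)$, and via Gelfand--Kazhdan the remaining Hom reduces to conditions demanding that a cuspidal $\pi_j$ be ${\rm Sp}_1(F)$-distinguished, which is again impossible. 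Hence ${\rm Hom}_H(\pi, \mathbb C) = 0$, and by symmetry the same vanishing holds for every reordering of the three cuspidal factors.

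To upgrade this global Hom vanishing into the statement that no irreducible subquotient is $H$-distinguished, I would use that every irreducible subquotient of a product of ${\rm GL}_2$-cuspidals on a single cuspidal line is realized as the unique irreducible submodule of some reordering of $\pi_1 \times \pi_2 \times \pi_3$ via Zelevinsky's ``does not precede'' presentation. Passing to contragredients via Lemma \ref{duality lemma} converts this embedding into a surjection from a reordering of the $\tilde\pi_i$'s onto $\tilde\theta$; the $\tilde\pi_i$ are again ${\rm GL}_2$-cuspidals on the single cuspidal line of $\tilde\sigma_2$, so the Mackey computation above applies verbatim and gives vanishing of the $H$-Hom of this new product. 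Hence $\tilde\theta$, and therefore $\theta$, cannot be $H$-distinguished, contradicting our hypothesis. This rules out $2k_1 = 6$ and completes the proof.
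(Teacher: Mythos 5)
Your proof is correct, but it takes a genuinely different route from the paper. The paper organizes the argument by how many of the $\pi_i$ are cuspidal: if only one is cuspidal (or two but with incompatible cuspidal lines) it uses Lemma~\ref{irr} directly, and in the hardest case where all three factors are cuspidal on one cuspidal line it enumerates the few possible irreducible subquotients of $\sigma_2\times\nu\sigma_2\times\nu^2\sigma_2$, $\sigma_2\times\nu\sigma_2\times\nu\sigma_2$, etc., ruling out each one by a tailored Mackey or unitarity argument. You instead decompose the distinguished subquotient $\theta = Z(\mathfrak{a})$ itself by cuspidal line, $\theta \cong Z(\mathfrak{a}_{\sigma_2})\times Z(\mathfrak{a}')$, invoke Lemma~\ref{irr} to make each factor distinguished, and then case on the degree $2k_1$ of the $\sigma_2$-line piece: $2k_1=2$ dies by genericity, $2k_1=4$ is pinned down by the completed $\mathrm{GL}_4$ classification (Theorem~\ref{Gl4 result1}) which gives exactly $Z([\nu^a\sigma_2,\nu^{a+1}\sigma_2])\times\chi_2$, and $2k_1=6$ is killed by one Mackey computation applied to every reordering of the three cuspidals, together with the standard fact that every irreducible with given cuspidal support embeds in some reordering of the product of its cuspidals and the self-duality of the symplectic period. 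Your approach is more uniform and avoids the subquotient-by-subquotient enumeration in the all-cuspidal case, at the cost of invoking the $\mathrm{GL}_4$ classification and the embedding-into-a-reordering fact; the paper's is more hands-on and self-contained. Both are sound, and the single Mackey computation in your $2k_1=6$ case is essentially what the paper does factor by factor, so the two proofs are computing the same thing from slightly different organizational standpoints.
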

\begin{proof}
Without loss of generality let $\pi_{3}$ be a supercuspidal. Call it $\sigma_{2}$. Now there can be three cases depending on $\pi_{1}$ and $\pi_{2}$.
\paragraph{{\bf Case 1: None of $\pi_{1}$ and $\pi_{2}$ are cuspidal.}}
In this case $\sigma_{2}$ is not in the cuspidal support of $\pi_{1}\times\pi_{2}$ and hence any irreducible subquotient of $\pi$ is of the form $\sigma_{2}\times J$ where $J$ is an irreducible subquotient of $\pi_{1}\times\pi_{2}$. By Lemma \ref{irr}, it doesn't have a symplectic period.
\paragraph{{\bf Case 2: Both $\pi_{1}$ and $\pi_{2}$ are cuspidal.}}
In this case $\pi$ is of the form $\sigma_{2}\times\sigma_{2}^{'}\times\sigma_{2}^{''}$. In case none of the pairs are linked or there is exactly one linked pair among the 3, then again by Lemma \ref{irr}, $\pi$ doesn't have an ${\rm Sp}_{3}(F)$-distinguished irreducible subquotient. So $\pi$ has to be either of the form $\sigma_{2}\times\nu\sigma_{2}\times\nu\sigma_{2}$, $\sigma_{2}\times\sigma_{2}\times\nu\sigma_{2}$ or $\sigma_{2}\times\nu\sigma_{2}\times\nu^{2}\sigma_{2}$ (up to a permutation of $\pi_{i}$'s). 

If $\pi=\sigma_{2}\times\nu\sigma_{2}\times\nu^{2}\sigma_{2}$ (or a permutation), then it has 4 irreducible subquotients. Of these, $Q([\sigma_{2},\nu^{2}\sigma_{2}])$ is generic and $Z([\sigma_{2},\nu^{2}\sigma_{2}])$ doesn't have a symplectic period (by Theorem \ref{unitary symplectic}). Now consider the subquotient $Z([\sigma_{2}],[\nu\sigma_{2},\nu^{2}\sigma_{2}])$. It is the unique irreducible quotient of the representation $\mu_{1}\times\mu_{2}$ where $\mu_{1}=\sigma_{2}$ and $\mu_{2}=Z([\nu\sigma_{2},\nu^{2}\sigma_{2}])$. Now, using Eqn. \ref{general case eqn}, it can be easily checked that ${\rm Hom}_{H}\big({\rm ind}^{H}_{H_{2,0}}(\delta_{P_{2,4}}^{1/2}\mu_{1}\otimes\mu_{2}|_{H_{2,0}}),\mathbb C\big)$ and  ${\rm Hom}_{H}\big({\rm ind}^{H}_{H_{2,2}}(\delta_{P_{2,4}}^{1/2}\mu_{1}\otimes\mu_{2}|_{H_{2,2}}),\mathbb C\big)$ are both 0, thus implying ${\rm Hom}_{H}\big(\mu_{1}\times\mu_{2},\mathbb C\big)=0$. So, $Z([\sigma_{2}],[\nu\sigma_{2},\nu^{2}\sigma_{2}])$ doesn't have a symplectic period and by taking contragredients we conclude that neither does $Z([\nu^{2}\sigma_{2}],[\sigma_{2},\nu\sigma_{2}])$. Thus $\pi$ doesn't have any irreducible subquotient carrying a symplectic period.

If $\pi=\sigma_{2}\times\nu\sigma_{2}\times\nu\sigma_{2}$ (or a permutation), it is glued from the irreducible representations $\nu\sigma_{2}\times Z([\sigma_{2},\nu\sigma_{2}])$ and $\nu\sigma_{2}\times Q([\sigma_{2},\nu\sigma_{2}])$. As in the above paragraph, taking $\mu_{1}=\nu\sigma_{2}$ and $\mu_{2}=Z([\sigma_{2},\nu\sigma_{2}])$ and using Eqn. \ref{general case eqn}, it can be easily checked that $\nu\sigma_{2}\times Z([\sigma_{2},\nu\sigma_{2}])$ doesn't have a symplectic period. The representation $\nu\sigma_{2}\times Q([\sigma_{2},\nu\sigma_{2}])$ is generic and hence doesn't have a symplectic period by (by Theorem \ref{strong uniqueness}). Similarly $\sigma_{2}\times\sigma_{2}\times\nu\sigma_{2}$ (or any of its permutations) cannot have an ${\rm Sp}_{3}(F)$-distinguished subquotient either.

\paragraph{{\bf Case 3: Exactly one of $\pi_{1}$ and $\pi_{2}$ is cuspidal.}}
Up to a permutation, $\pi$ then is a representation of the form $\sigma_{2}\times \sigma_{2}^{'}\times\theta^{'}$ where $\theta^{'}$ is an irreducible representation of ${\rm GL}_{2}(F)$ which isn't a supercuspidal. If $\sigma_{2}^{'}$ and $\sigma_{2}$ are linked and $\theta^{'}$ is a character then $\pi$ has an ${\rm Sp}_{3}(F)$-distinguished subquotient of the required form. Otherwise again by Lemma \ref{irr}, it doesn't have one. 
\end{proof}

Thus it reduces the analysis to the cases where each $\pi_{i}$ is either a character, an irreducible principal series or a twist of the Steinberg. Note that (up to a permutation of $\pi_{i}$'s) there are 10 possible cases. Next we show that if at least two of the $\pi_{i}$'s are irreducible principal series representations, then we need not consider those cases. This reduces the analysis to the rest 7 cases. 
\begin{lemma}\label{no two irrd ps}
Let $\pi_{i},(1\leq i \leq 3)$ be irreducible admissible representations of ${\rm GL}_{2}(F)$ such that none of them are cuspidal. If two or more of the $\pi_{i}$'s are irreducible principal series representations and $\theta$ is an ${\rm Sp}_{3}(F)$-distinguished subquotient of $\pi=\pi_{1}\times\pi_{2}\times\pi_{3}$ then it also appears as a subquotient of $\pi=\pi_{1}\times\pi_{2}\times \pi_{3}$ where at most one of the $\pi_{i}$'s is a principal series representation.
\end{lemma}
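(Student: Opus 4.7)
Without loss of generality, assume $\pi_1=\chi_1\times\chi_1'$ and $\pi_2=\chi_2\times\chi_2'$ are irreducible principal series with the $\chi$'s being characters of $F^\times$. Since $\pi_3$ is non-cuspidal, let $\{\xi_1,\xi_2\}$ be its cuspidal support, so that $\pi_3$ is a subquotient of $\xi_1\times\xi_2$. Then $\pi$ is a subrepresentation of the ${\rm GL}_6(F)$-module $\chi_1\times\chi_1'\times\chi_2\times\chi_2'\times\xi_1\times\xi_2$ induced from the Borel, and the composition series of a torus-induced representation is invariant under permutation of the inducing characters. Hence, for any bipartition of these six characters into three pairs $\{(a_j,b_j)\}_{j=1}^{3}$, $\theta$ is a subquotient of $(a_1\times b_1)\times(a_2\times b_2)\times(a_3\times b_3)$ induced from $P_{(2,2,2)}$.

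The strategy is to find a bipartition in which at least two of the three pairs are linked (i.e.\ $a_j b_j^{-1}=\nu^{\pm 1}$). When a pair is linked, its ${\rm GL}_2$-induction is reducible of length two with irreducible subquotients a character and a twist of Steinberg of ${\rm GL}_2(F)$. Choosing suitable subquotients in each reducible factor, $\theta$ becomes a subquotient of a new $\pi_1'\times\pi_2'\times\pi_3'$ in which at most one $\pi_j'$ is an irreducible principal series, as required.

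The existence of such a bipartition reduces to a graph-theoretic question on the graph $\Gamma$ whose vertices are the six characters and whose edges are the $\nu$-linked pairs. Observe that $\Gamma$ is triangle-free: three pairwise linked characters would require $\nu$-exponents $\varepsilon_1,\varepsilon_2,\varepsilon_3\in\{\pm 1\}$ satisfying $\varepsilon_3=\varepsilon_2-\varepsilon_1$, which has no solution. Consequently, if the matching number of $\Gamma$ is at most one, then either $\Gamma$ has no edges at all, or all its edges share a common vertex (the ``star'' case). In the edgeless case, the representation $\chi_1\times\chi_1'\times\chi_2\times\chi_2'\times\xi_1\times\xi_2$ is irreducible by Proposition 8.5 of \cite{Z} and generic, so $\theta$ cannot have a symplectic period by Theorem \ref{strong uniqueness}, contradicting the hypothesis.

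The main obstacle is the star case. I would handle it by Mackey analysis for the parabolic $P_{(4,2)}$ of ${\rm GL}_6(F)$, writing $\pi=\tau\times\pi_3$ where $\tau$ is the relevant ${\rm GL}_4(F)$-subquotient built from four of the six characters (chosen so that the star center can be isolated). Combining Equation \ref{general case eqn} with the geometric-lemma computation of $r_{N_1}(\tau)$ (Lemma 2.12 of \cite{BZ2}), a nonzero ${\rm Sp}_3(F)$-invariant functional on $\pi$ forces the cuspidal support of $\pi_3$ to coincide, up to a $\nu$-shift, with some pair $\{\chi_i,\chi_j\}$ drawn from $\tau$'s support. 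These identities produce a second $\nu$-linked pair disjoint from every edge through the star's center, contradicting the star assumption. This rules out the star case, ensures the existence of a matching of size at least two in $\Gamma$, and completes the proof.
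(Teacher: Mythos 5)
Your reduction to a subquotient of a Borel-induced representation on six characters, and your reformulation of the paper's ``partition with at least two linked pairs'' condition as a matching-number condition on the linking graph $\Gamma$, is a clean restatement of exactly what the paper does. The triangle-free observation and the resulting dichotomy (matching number $\geq 2$, versus $\Gamma$ edgeless or a star) is correct and is a neat way to organize the cases. However, there are real gaps in how you dispose of the hard side of the dichotomy.

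First, your treatment of the star case is vague in a way that matters. The assertion that a nonzero ${\rm Sp}_3(F)$-functional ``forces the cuspidal support of $\pi_3$ to coincide, up to a $\nu$-shift, with some pair drawn from $\tau$'s support'' is not what Eqn.~(\ref{general case eqn}) gives you, because you also have the closed orbit $r=0$, whose contribution is a tensor factorization into ${\rm Sp}_1$- and ${\rm Sp}_2$-distinguished pieces with no support-matching constraint at all; that orbit must be ruled out separately (and can only be, after observing that the relevant ${\rm GL}_2$ piece is an irreducible non-character or that the relevant ${\rm GL}_4$ piece is not ${\rm Sp}_2$-distinguished, which requires Theorem~\ref{Gl4 result1}). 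More importantly, the proposed mechanism --- that the Mackey constraints would ``produce a second $\nu$-linked pair disjoint from every edge through the star's center, contradicting the star assumption'' --- is not the right picture. In the star configurations $\{c,(c\pm 1)^5\}$ and $\{(c-1)^k,c,(c+1)^{5-k}\}$, when you split into a ${\rm GL}_4$-piece and a ${\rm GL}_2$-piece and run Eqn.~(\ref{general case eqn}), the orbit terms simply vanish because the requisite $\nu$-exponents cannot match (the shift by $\nu^{\pm 1}$ lands on a multiplicity that is not present in the other factor); there is no hidden linked pair being created that contradicts the star structure. This is precisely what the paper's Lemma~\ref{no disjoint linked} does: it enumerates the finitely many star configurations and checks orbit by orbit, via the explicit calculations of case~1(a) of Theorem~\ref{main theorem}, that every permuted product has vanishing ${\rm Hom}_{{\rm Sp}_3(F)}$. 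Your proposal needs to carry out those computations; the combinatorial shortcut you gesture at does not substitute for them.

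Second, you omit two reductions the paper makes before the star/edgeless analysis even begins: (i)~if the six inducing characters are not all on a single cuspidal line, Lemma~\ref{irr} already expresses any distinguished subquotient as an irreducible product of lower-rank distinguished pieces, and (ii)~if some character is isolated (degree zero in $\Gamma$) while other edges exist, Lemma~\ref{irr} again pulls out that character and kills the distinction. Your dichotomy ``edgeless versus star'' silently allows stars with extra isolated vertices; these have to be dispatched before the Mackey analysis, not buried inside it. Neither of these gaps is fatal to the overall strategy, but as written the proof does not close.
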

\begin{proof}
If $\theta$ is as above, it is a subquotient of a representation of the form $\tau=\chi_{1}\times...\times \chi_{6}$, where each $\chi_{i}$ is a character of ${\rm GL}_{1}(F)$. It is easy to see that Lemma \ref{irr} implies that unless all the $\chi_{i}$'s are in the same cuspidal line, $\theta$ is an irreducible product of a character of ${\rm GL}_{2}(F)$ and an irreducible ${\rm Sp}_{2}(F)$-distinguished representation. We count them in the case when all the three $\pi_{i}$'s are characters. So without loss of generality we can assume the $\chi_{i}$'s to be integral powers of the character $\nu$ of ${\rm GL}_{1}(F)$. Say a character is linked to another character if they are linked as one element segments (see Section \ref{Preliminaries on segments}), i.e. $\nu^{a}$ and $\nu^{b}$ are linked iff $a-b=\pm 1$. If no two of the characters appearing in $\tau$ are linked, it is irreducible and generic and so $\theta$ cannot be its subquotient. So we can assume $\tau\cong 1\times\nu\times\nu^{a}\times \nu^{b}\times\nu^{c}\times\nu^{d}$. 

Now, assume that there is a character among $\nu^{a},...,\nu^{d}$ (say $\nu^{a}$) such that it is not linked to any of the other characters. Collecting all the $\nu^{a}$s together, we see that $\theta=\nu^{a}\times...\times \nu^{a}\times J$ for some irreducible representation $J$ such that $\nu^{a}\times...\times\nu^{a}$ and $J$ satisfy the hypothesis of Lemma \ref{irr}. So $\tau$ cannot have an ${\rm Sp}_{3}(F)$-distinguished subquotient. Thus we further assume that all the characters among $\nu^{a},...,\nu^{d}$ are linked to some other character. 

Note that if there exists a partition of the characters of $\tau$ such that at least two different blocks of the partition consist of linked pairs, $\tau$ is glued from subquotients of the form $\tau_{1}\times\tau_{2}\times \nu^{n_{1}}\times\nu^{n_{2}}$ where $\tau_{i}$ is either a character or a twist of the Steinberg. Thus $\theta$ can also be obtained in the cases when two of the $\pi_{i}$'s are characters, two of them are twists of the Steinberg or one of the $\pi_{i}$'s is a character and another one is a twist of the Steinberg. 

Thus if we show that, under the hypothesis that any two of the characters of $\tau$ are linked and such a partition of them doesn't exist, $\tau$ cannot have an irreducible $H$-distinguished subquotient we are done. Lemma \ref{no disjoint linked} precisely does that.
\end{proof}  
\begin{lemma}\label{no disjoint linked}
Call the characters $\nu^{a}$ and $\nu^{b}$ to be linked iff $a-b=\pm 1$. Let $\tau\cong 1\times\nu\times\nu^{a}\times \nu^{b}\times\nu^{c}\times\nu^{d}$ be such that every character of it is linked to some other character. Furthermore assume that there doesn't exist any partition of the characters such that at least two different blocks of it consist of linked pairs. Then $\tau$ cannot have an irreducible ${\rm Sp}_{3}(F)$-distinguished subquotient.
\end{lemma}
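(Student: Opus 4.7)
The plan is to combine a structural classification of the admissible multisets with Zelevinsky's subquotient classification and Mackey theory via Eqn \ref{general case eqn}.

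First I would observe that the two hypotheses impose a rigid ``star'' shape on the multiset $M$ of exponents of $\tau$. View the six characters as vertices of a graph with an edge between each pair of linked characters. The hypothesis that every character is linked to some other forbids isolated vertices, and the hypothesis on partitions forbids a matching of size two. Since a triangle in this graph would require three integers pairwise at distance one (impossible in $\mathbb{Z}$), the graph must be a star: there is a unique ``center'' exponent $c$ of multiplicity one, with every other exponent equal to $c-1$ or $c+1$. Because $0, 1 \in M$, the center satisfies $c \in \{0, 1\}$, and by contragredient symmetry it suffices to treat $c = 1$, so $M = \{0^a, 1, 2^b\}$ with $a + b = 5$ and $a \geq 1$, giving five configurations to handle.

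For each configuration I would enumerate the irreducible subquotients $Z(\mathfrak{a})$ of $\tau$ via Zelevinsky's classification: since $1$ has multiplicity one, the unique segment in $\mathfrak{a}$ containing $\nu$ is one of $[1], [0,1], [1,2], [0,1,2]$, and all other segments are singletons $[0]$ or $[2]$. I would then express each $Z(\mathfrak{a})$ as an irreducible induced product $\pi_1 \times \pi_2$ obtained by grouping the segments into two parts --- for example, $\pi_1 = Z(\Delta)$ for the ``long'' segment and $\pi_2$ the remaining character-product, or a natural ${\rm GL}_3 \times {\rm GL}_3$ split in the all-singleton case. Irreducibility of this product follows from the non-linkage of $\Delta$ with the residual singletons (which are either contained in $\Delta$ or share no consecutive exponent with it).

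To rule out a non-zero $H$-invariant form on $\pi_1 \times \pi_2$, I would apply Mackey theory via Eqn \ref{general case eqn} to analyze each orbit index $r$ separately. The Jacquet module $r_N(\pi_1)$ is an explicit character (when $\pi_1 = \chi \cdot 1_k$ for a twisted trivial) or a sum of permuted character tensors, and similarly for $r_{N'}(\pi_2)$. Comparing the central character on the ${\rm GL}_r$ component with the $\nu^{-(n-r+1)}$ twist from $\alpha$ would produce, for each $r \geq 1$, an equality of characters $\nu^{p} = \nu^{q}$ with $p \neq q$ forced by the star structure of $M$, so $\mathrm{Hom}_{{\rm GL}_r}$ vanishes. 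For the $r = 0$ orbit, the vanishing follows either from Theorem \ref{unitary symplectic} (the relevant unitary character-product factor is not in $\mathcal{B}$, since its cuspidal support is concentrated at a single exponent rather than organized into segments of even length) or from Theorem \ref{strong uniqueness} if one of the $\pi_i$ is generic and hence lives in a non-symplectic Klyachko model.

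The hard part will be the bookkeeping when the long segment is $[0,1,2]$: the natural splitting is then ${\rm GL}_3 \times {\rm GL}_3$ with orbit indices $r \in \{1, 3\}$, and the Jacquet of $\chi \cdot 1_3$ combines with a three-character Jacquet of the companion factor. Each of the five configurations produces multiple Zelevinsky data of this type. I expect the star structure of $M$ to guarantee non-matching of central characters uniformly, but a systematic case-by-case check seems unavoidable.
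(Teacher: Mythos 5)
Your structural reduction is correct and matches the paper's in substance: the two hypotheses force the multiset of exponents into a ``star'' with a unique multiplicity-one center $c$, and since $0,1$ both occur we must have $c\in\{0,1\}$; contragredient-plus-twist duality then reduces to $c=1$, i.e.\ $M=\{0^a,1,2^b\}$ with $a\ge 1$, $a+b=5$, five configurations in all. The paper organizes the same case split by noting that the support lies in $\{\nu^{-1},1,\nu\}$ or $\{1,\nu,\nu^2\}$ with the middle value of multiplicity one, and uses the contragredient to pass between the two. Your observation that every irreducible subquotient $Z(\mathfrak a)$ has a single ``long'' segment (the one containing the multiplicity-one exponent) and otherwise only singletons is also correct.

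There is, however, a genuine gap in the step where you express $Z(\mathfrak a)$ as an \emph{irreducible} product $Z(\Delta)\times(\text{residual singletons})$, justified by the parenthetical that the residual singletons are ``either contained in $\Delta$ or share no consecutive exponent with it.'' That assertion is false. If $\Delta=[0,1]$ and a residual singleton $[2]$ remains (which happens in any configuration with $b\ge 1$ whenever $[1]$ is merged only leftward), then $[0,1]$ and $[2]$ \emph{are} linked --- their union $[0,1,2]$ is a segment and neither contains the other --- so by Proposition 8.5 of \cite{Z} the product $Z([0,1])\times\nu^2\times\cdots$ is reducible and is not isomorphic to $Z(\mathfrak a)$. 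The same obstruction appears when $\Delta=[1,2]$ with a residual $[0]$, and $\Delta=[1]$ is linked to both $[0]$ and $[2]$, so your ``all-singleton'' $GL_3\times GL_3$ split cannot produce an irreducible product either. The repair is exactly what the paper does: present $Z(\mathfrak a)$ (or its contragredient, invoking Lemma \ref{duality lemma}) merely as a \emph{quotient} of a suitably ordered product $\pi_1\times\pi_2$, and then run the Mackey computation of Eqn.\ \ref{general case eqn} on that cover; an $H$-invariant form on $Z(\mathfrak a)$ would pull back to one on the cover, giving the contradiction. Once you replace ``isomorphic to an irreducible product'' by ``quotient of'' throughout, the remaining outline --- enumeration by the star configuration, Mackey with the $\alpha$-twist for $r\ge 1$, genericity or mixed-model uniqueness for $r=0$ --- is in the same spirit as the paper's argument.
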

\begin{proof}
Let if possible $\theta$ be an ${\rm Sp}_{3}(F)$-distinguished subquotient of $\tau$. The hypothesis of the lemma implies that the cuspidal support of $\tau$ can have at most $\nu^{-1}$ or $\nu^{2}$ along with 1 and $\nu$. Moreover, $\nu^{-1}$ and $\nu^{2}$ can both not be there simultaneously and in case the support only consists of 1 and $\nu$, $\tau$ is one of the representations $1\times 1\times 1\times 1\times 1\times \nu$ or $1\times \nu\times \nu\times \nu\times \nu\times \nu$ (up to a permutation of the characters). 

In case, $\tau$ has $\nu^{-1}$ in the cuspidal support, 1 can be there only with multiplicity one and so the only possible forms for $\tau$, up to a permutation of the characters, are $\nu^{-1}\times 1\times \nu\times \nu\times \nu\times \nu$, $\nu^{-1}\times \nu^{-1}\times 1\times\nu\times \nu\times \nu$, $\nu^{-1}\times \nu^{-1}\times \nu^{-1}\times 1\times\nu\times \nu$ or $\nu^{-1}\times \nu^{-1}\times \nu^{-1}\times \nu^{-1}\times 1\times\nu$. Consider first the representation $\nu^{-1}\times \nu^{-1}\times \nu^{-1}\times \nu^{-1}\times 1\times\nu$. There exists a permutation of $\nu^{-1},...,\nu^{-1},1,\nu$ such that $\theta$ is a quotient of the representation that is obtained by taking the product in that order. It is an easy calculation (using similar arguments as in case 1(a) to follow i.e. in the case when all three $\pi_{i}$'s are characters) to check that no permutation gives a product which is $H$-distinguished. Thus $\theta$ cannot have a symplectic period which is a contradiction. So $\tau$ cannot be $\nu^{-1}\times \nu^{-1}\times \nu^{-1}\times \nu^{-1}\times 1\times\nu$ (or any permutation of the characters). Similarly one checks that $\tau$ cannot be $\nu^{-1}\times \nu^{-1}\times \nu^{-1}\times 1\times\nu\times \nu$ (or any permutation of the characters). Since the other two representations are contragredients of the above two representations, they cannot have any $H$-distinguished subquotients either. Thus $\tau$ cannot be any permutation of one of them either and we conclude that $\tau$ cannot have $\nu^{-1}$ in its cuspidal support.

Observe that the possible values of $\tau$ if its cuspidal support has $\nu^{2}$ instead of $\nu^{-1}$ can all be obtained by appropriately twisting the contragredients of the ones obtained in the $\nu^{-1}$ case. So $\tau$ cannot be one of them either and hence cannot have $\nu^{2}$ in its cuspidal support.
 
Thus $\tau$ can only have $1$'s and $\nu$'s in its cuspidal support. If $\tau=1\times 1\times 1\times 1\times 1\times \nu$ (or any permutation of the characters), it is glued from $Z([1,\nu])\times 1\times 1\times 1\times 1$ and $Q([1,\nu])\times 1\times 1\times 1\times 1$. For the first one take $\mu_{1}=Z([1,\nu]),\mu_{2}=1\times 1\times 1\times 1$ and use Eqn. \ref{general case eqn} (as in case 1(a) to follow) to conclude that it doesn't have a symplectic period. The second one is generic and hence also cannot have symplectic period. Thus again $\tau$ cannot have $\theta$ as a subquotient and so it cannot be a permutation of $1\times 1\times 1\times 1\times 1\times \nu$. Taking contragredients  we conclude that it cannot be a permutation of $1\times \nu\times \nu\times \nu\times \nu\times \nu$ either. This shows that even 1 and $\nu$ cannot be in the cuspidal support of $\tau$. This is a contradiction to our initial assumption that $\tau$ has an ${\rm Sp}_{3}(F)$-distinguished subquotient.
\end{proof}
Thus we need to analyze only the remaining 7 cases. We begin with the following: 
\paragraph{{\bf Case 1: All the three $\pi_{i}$'s are characters.}}$\newline$
The representations in this case are of the form $$\pi=Z([\chi_{1},\chi_{1}\nu])\times Z([\chi_{1}^{'},\chi_{1}^{'}\nu])\times Z([\chi_{1}^{''},\chi_{1}^{''}\nu]).$$

If there are no links among the three segments, then the representation is an irreducible product of characters of ${\rm GL}_{2}(F)$ and the representation is symplectic by Proposition \ref{main prop}. Assume now that there is exactly one link. Without loss of generality we can assume that $[\chi_{1}^{'},\chi_{1}^{'}\nu]$ and $[\chi_{1}^{''},\chi_{1}^{''}\nu]$ are linked, and that $[\chi_{1},\chi_{1}\nu]$ is not linked to either. Clearly then, $\chi_{1}\neq\chi_{1}^{'},\chi_{1}^{''}$. So, $[\chi_{1},\chi_{1}\nu]$ is disjoint and not linked to either $[\chi_{1}^{'},\chi_{1}^{'}\nu]$ or $[\chi_{1}^{''},\chi_{1}^{''}\nu]$. Observe that if a segment $\Delta_{1}$ is not linked to $\Delta_{2}$ and $\Delta_{3}$ (where $\Delta_{2}$ and $\Delta_{3}$ are linked), it is not linked to $\Delta_{2}\cup\Delta_{3}$ or $\Delta_{2}\cap\Delta_{3}$ either. So by Theorem 7.1 and Proposition 8.5 of \cite{Z}, each irreducible subquotient of $\pi$ is of the form $Z([\chi_{1},\chi_{1}\nu])\times \theta^{'}$, where $\theta^{'}$ is an irreducible subquotient of $Z([\chi_{1}^{'},\chi_{1}^{'}\nu])\times Z([\chi_{1}^{''},\chi_{1}^{''}\nu])$. Moreover if the subquotient is $H$-distinguished, observe then that $Z([\chi_{1},\chi_{1}\nu])$ and $\theta^{'}$ satisfy the hypothesis of Lemma \ref{irr}. Thus by Lemma \ref{irr} any irreducible ${\rm Sp}_{3}(F)$-distinguished subquotient is an irreducible product of $H$-distinguished representations of ${\rm GL}_{2}(F)$ and ${\rm GL}_{4}(F)$. 

Hence we look at the cases where there are at least two links among the segments. Without loss of generality we can assume $\chi_{1}$ to be trivial. Following are then the 8 possible cases:
$\newline a) \ Z([1,\nu])\times Z([\nu,\nu^{2}])\times Z([\nu^{3},\nu^{4}])$
$\newline b) \ Z([1,\nu])\times Z([\nu,\nu^{2}])\times Z([\nu^{2},\nu^{3}])$
$\newline c) \ Z([1,\nu])\times Z([\nu^{2},\nu^{3}])\times Z([\nu^{4},\nu^{5}])$
$\newline d) \ Z([1,\nu])\times Z([\nu,\nu^{2}])\times Z([\nu,\nu^{2}])$
$\newline e) \ Z([1,\nu])\times Z([\nu^{2},\nu^{3}])\times Z([\nu^{2},\nu^{3}])$
$\newline f) \ Z([1,\nu])\times Z([\nu^{2},\nu^{3}])\times Z([\nu^{3},\nu^{4}])$
$\newline g) \ Z([1,\nu])\times Z([1,\nu])\times Z([\nu,\nu^{2}])$
$\newline h) \ Z([1,\nu])\times Z([1,\nu])\times Z([\nu^{2},\nu^{3}])$
$\newline$
In each one of the cases listed above, we evaluate all possible irreducible subquotients (using Theorem 7.1 of \cite{Z}) and individually determine whether they have a symplectic period or not.
\paragraph{{\bf Case a) $\pi=Z([1,\nu])\times Z([\nu,\nu^{2}])\times Z([\nu^{3},\nu^{4}])$ }}
$\newline$In this case, all irreducible subquotients of $\pi$ are $Z([1,\nu],[\nu,\nu^{2}],[\nu^{3},\nu^{4}])$, $Z([1,\nu^{2}],[\nu],[\nu^{3},\nu^{4}])$, $Z([1,\nu^{4}],[\nu])$ and $Z([1,\nu],[\nu,\nu^{4}])$. We now analyze each of these representations one by one.

$\theta=Z([1,\nu],[\nu,\nu^{2}],[\nu^{3},\nu^{4}])$ is the unique irreducible submodule of $Z([\nu^{3},\nu^{4}])\times Z([\nu,\nu^{2}])\times Z([1,\nu])$. Using Lemma \ref{duality lemma} and then taking the contragredients we get that $\theta$ is the unique irreducible quotient of $\pi=Z([1,\nu])\times Z([\nu,\nu^{2}])\times Z([\nu^{3},\nu^{4}])$. Since $Z([\nu,\nu^{2}],[\nu^{3},\nu^{4}])$ is a quotient of $Z([\nu,\nu^{2}])\times Z([\nu^{3},\nu^{4}])$, $\theta$ is also the unique irreducible quotient of $Z([1,\nu])\times Z([\nu,\nu^{2}],[\nu^{3},\nu^{4}])$. 

Let $\mu_{1}=Z([1,\nu])$ and $\mu_{2}=Z([\nu,\nu^{2}],[\nu^{3},\nu^{4}])$.
Now, $\mu_{1}\times\mu_{2}$ is glued from $${\rm ind}^{H}_{H_{2,0}}(\delta^{1/2}_{P_{2,4}}\mu_{1}\otimes\mu_{2}|_{H_{2,0}})\ \textrm{and} \ {\rm ind}^{H}_{H_{2,2}}(\delta^{1/2}_{P_{2,4}}\mu_{1}\otimes\mu_{2}|_{H_{2,2}})$$(see Section \ref{Orbit structures and mackey theory}). Since $\mu_{2}$ doesn't have a symplectic period (by Lemma \ref{juxt no symplectic period lemma}), $${\rm Hom}_{H}\big({\rm ind}^{H}_{H_{2,0}}(\delta^{1/2}_{P_{2,4}}\mu_{1}\otimes\mu_{2}|_{H_{2,0}}),\mathbb C\big)={\rm Hom}_{{\rm Sp}_{1}}\big(\mu_{1},\mathbb C \big)\otimes {\rm Hom}_{{\rm Sp}_{2}}\big(\mu_{2},\mathbb C\big)=0$$(by \ref{general case eqn}). On the other hand, 
$${\rm Hom}_{H}\big({\rm ind}^{H}_{H_{2,2}}(\delta^{1/2}_{P_{2,4}}\mu_{1}\otimes\mu_{2}|_{H_{2,2}}),\mathbb C\big) ={\rm Hom}_{{\rm GL}_{2}\times {\rm Sp}_{1}}\big(\nu^{-1}\mu_{1}\otimes r_{(2,2),(4)}(\mu_{2}),\mathbb C\big).$$
It can be seen by the geometrical lemma, cf. Lemma 2.12 of \cite{BZ2}, that $r_{(2,2),(4)}(Z([\nu,\nu^{2}])\times Z([\nu^{3},\nu^{4}]))$ is glued from the irreducible representations $Z([\nu,\nu^{2}])\otimes Z([\nu^{3},\nu^{4}])$, $Z([\nu^{3},\nu^{4}])\otimes Z([\nu,\nu^{2}])$ and $(\nu\times\nu^{3})\otimes(\nu^{2}\times\nu^{4})$. Jacquet functor being an exact functor, $r_{(2,2),(4)}(\mu_{2})$ is glued from one or more of these terms. It can be checked that substituting $r_{(2,2),(4)}(\mu_{2})$ by each one of these three representations makes the right hand side of the above equation 0.

Thus, we get that $Z([1,\nu])\times Z([\nu,\nu^{2}],[\nu^{3},\nu^{4}])$ doesn't have a symplectic period. If $\theta$ had a symplectic period, this would have given a non-trivial ${\rm Sp}_{3}(F)$-invariant linear functional of $Z([1,\nu])\times Z([\nu,\nu^{2}],[\nu^{3},\nu^{4}])$ (by composing the one for $\theta$ with the quotient map), a contradiction. Hence $\theta$ doesn't have a symplectic period.  

If $\theta=Z([1,\nu^{2}],[\nu],[\nu^{3},\nu^{4}])$, it is the unique irreducible submodule of $Z([\nu^{3},\nu^{4}])\times \nu \times Z([1,\nu^{2}])$. Using Lemma \ref{duality lemma} and taking contragredients, $\theta$ is the unique irreducible quotient of $ Z([1,\nu^{2}]) \times \nu \times Z([\nu^{3},\nu^{4}])$. Applying Lemma \ref{duality lemma} again, $\tilde{\theta}$ is the unique irreducible quotient of $Z([\nu^{-4},\nu^{-3}])\times \nu^{-1}\times Z([\nu^{-2},1])$. By taking $\mu_{1}=Z([\nu^{-4},\nu^{-3}])$ and $\mu_{2}=\nu^{-1}\times Z([\nu^{-2},1])$ and doing a similar calculation as above, we get that $Z([\nu^{-4},\nu^{-3}])\times \nu^{-1}\times Z([\nu^{-2},1])$, and hence $\tilde{\theta}$, doesn't have a symplectic period. Thus even $\theta$ is not ${\rm Sp}_{3}(F)$-distinguished. 

If $\theta=Z([1,\nu^{4}],[\nu])\cong Z([1,\nu^{4}])\times\nu$ (by Proposition 8.5 of \cite {Z}), by Theorem \ref{product of sub}, it is an irreducible quotient of $Z([\nu^{3},\nu^{4}])\times Z([1,\nu^{2}])\times \nu$. Now doing a similar calculation as in the first case by taking $\mu_{1}=Z([\nu^{3},\nu^{4}])$ and $\mu_{2}=Z([1,\nu^{2}])\times \nu$ we get that $Z([\nu^{3},\nu^{4}])\times Z([1,\nu^{2}])\times \nu$, and hence $\theta$, doesn't have a symplectic period.

If $\theta=Z([1,\nu],[\nu,\nu^{4}])$, it has a symplectic period by Lemma \ref{existence of symplectic period lemma}.

Thus we are done with case (a).

\paragraph{{\bf Case b) $\pi=Z([1,\nu])\times Z([\nu,\nu^{2}])\times Z([\nu^{2},\nu^{3}])$ }}
$\newline$In this case, all irreducible subquotients of $\pi$ are $Z([1,\nu],[\nu,\nu^{2}],[\nu^{2},\nu^{3}])$, $Z([1,\nu^{2}],[\nu],[\nu^{2},\nu^{3}])$, $\ Z([1,\nu],[\nu,\nu^{3}],[\nu^{2}])$, $Z([1,\nu^{3}],[\nu],[\nu^{2}])$, $\newline$$Z([1,\nu^{2}],[\nu,\nu^{3}])$ and $Z([1,\nu^{3}],[\nu,\nu^{2}])$. We now analyze each of these representations one by one.

If $\theta=Z([1,\nu],[\nu,\nu^{2}],[\nu^{2},\nu^{3}])$, by Theorem A. 10(iii) of \cite{Tadic1}, $\newline$$\theta=Q([1,\nu^{2}],[\nu,\nu^{3}])$. Twisting $\theta$ by an appropriate power of $\nu$ makes it a unitary representation, which then turns out to be ${\rm Sp}_{3}(F)$-distinguished by Theorem \ref{unitary symplectic}. 

If $\theta=Z([1,\nu^{2}],[\nu],[\nu^{2},\nu^{3}])$, it is the unique irreducible submodule of $Z([\nu^{2},\nu^{3}])\times\nu\times Z([1,\nu^{2}])$. Using Lemma \ref{duality lemma} and then taking the contragredients we get that $\theta$ is the unique irreducible quotient of $\pi=Z([1,\nu^{2}])\times \nu\times Z([\nu^{2},\nu^{3}])$. Using Lemma \ref{duality lemma} again, we get that $\tilde{\theta}$ is the unique irreducible quotient of $Z([\nu^{-3},\nu^{-2}])\times \nu^{-1}\times Z([\nu^{-2},1])$. By taking $\mu_{1}=Z([\nu^{-3},\nu^{-2}])$ and $\mu_{2}=\nu^{-1}\times Z([\nu^{-2},1])$, and doing a similar calculation as in case (a), we get that $Z([\nu^{-3},\nu^{-2}])\times \nu^{-1}\times Z([\nu^{-2},1])$ and hence $\tilde{\theta}$, doesn't have a symplectic period. Thus even $\theta$ is not ${\rm Sp}_{3}(F)$-distinguished.

If $\theta=Z([1,\nu],[\nu,\nu^{3}],[\nu^{2}])$, it can be obtained by twisting the contragredient of $Z([1,\nu^{2}],[\nu],[\nu^{2},\nu^{3}])$ which, as showed in the last paragraph, doesn't have a symplectic period. 

If $\theta=Z([1,\nu^{3}],[\nu],[\nu^{2}])$, it is the unique irreducible submodule of $\nu^{2}\times \nu\times Z([1,\nu^{3}])\cong Z([1,\nu^{3}])\times\nu^{2}\times \nu$. Thus it is the unique irreducible submodule of $Z([1,\nu^{3}])\times Q([\nu,\nu^{2}])$. Using Lemma \ref{duality lemma} and then taking the contragredients we get that $\theta$ is the unique irreducible quotient of $Q([\nu,\nu^{2}])\times Z([1,\nu^{3}])$. Now doing a similar calculation as in case (a) by taking $\mu_{1}=Q([\nu,\nu^{2}])$ and $\mu_{2}=Z([1,\nu^{3}])$ we get that $Q([\nu,\nu^{2}])\times Z([1,\nu^{3}])$, and hence $\theta$, doesn't have a symplectic period.

If $\theta=Z([1,\nu^{2}],[\nu,\nu^{3}])$,  by Theorem A. 10(iii) of \cite{Tadic1}, $\newline$$\theta=Q([1,\nu],[\nu,\nu^{2}],[\nu^{2},\nu^{3}])$. Twisting $\theta$ by an appropriate power of $\nu$ makes it a unitary representation. By Theorem \ref{unitary symplectic} it doesn't have a symplectic period. 

If $\theta=Z([1,\nu^{3}],[\nu,\nu^{2}])\cong Z([1,\nu^{3}])\times Z([\nu,\nu^{2}])$, it has a symplectic period by Lemma \ref{hereditary for Gl6}. 

Thus we are done with case (b).

\paragraph{{\bf Case c) $\pi=Z([1,\nu])\times Z([\nu^{2},\nu^{3}])\times Z([\nu^{4},\nu^{5}])$ }}

$\newline$In this case, all irreducible subquotients of $\pi$ are $Z([1,\nu],[\nu^{2},\nu^{3}],[\nu^{4},\nu^{5}])$, $Z([1,\nu^{3}],[\nu^{4},\nu^{5}])$, $Z([1,\nu],[\nu^{2},\nu^{5}])$ and $Z([1,\nu^{5}])$. Of these, by Lemma \ref{juxt no symplectic period lemma}, $Z([1,\nu^{3}],[\nu^{4},\nu^{5}])$ and $Z([1,\nu],[\nu^{2},\nu^{5}])$ do not have a symplectic period. $Z([1,\nu^{5}])$ being a character clearly has a symplectic period. We analyze the remaining representation.

If $\theta=Z([1,\nu],[\nu^{2},\nu^{3}],[\nu^{4},\nu^{5}])$, by definition, it is the unique irreducible submodule of $Z([\nu^{4},\nu^{5}])\times Z([\nu^{2},\nu^{3}])\times Z([1,\nu])$. Thus we get that it is the unique irreducible quotient of $\pi=Z([1,\nu])\times Z([\nu^{2},\nu^{3}])\times Z([\nu^{4},\nu^{5}])$ (using Lemma \ref{duality lemma} and then taking the contragredients). Since $Z([\nu^{2},\nu^{3}],[\nu^{4},\nu^{5}])$ is a quotient of $Z([\nu^{2},\nu^{3}])\times Z([\nu^{4},\nu^{5}])$, we get $\theta$ is a quotient of $Z([1,\nu])\times Z([\nu^{2},\nu^{3}],[\nu^{4},\nu^{5}])$. By taking $\mu_{1}=Z([1,\nu])$ and $\mu_{2}=Z([\nu^{2},\nu^{3}],[\nu^{4},\nu^{5}])$, and doing a similar calculation as in case (a), we get that $\newline$$Z([1,\nu])\times Z([\nu^{2},\nu^{3}],[\nu^{4},\nu^{5}])$ and hence $\theta$, doesn't have a symplectic period.

Thus we are done with case (c).

\paragraph{{\bf Case d) $\pi=Z([1,\nu])\times Z([\nu,\nu^{2}])\times Z([\nu,\nu^{2}])$ }}
$\newline$In this case, all irreducible subquotients of $\pi$ are $Z([1,\nu],[\nu,\nu^{2}],[\nu,\nu^{2}])$ and $Z([1,\nu^{2}],[\nu],[\nu,\nu^{2}])$. We now analyze both of these representations one by one.

If $\theta=Z([1,\nu^{2}],[\nu],[\nu,\nu^{2}])\cong Z([1,\nu^{2}])\times Z([\nu,\nu^{2}])\times \nu$, by Theorem 3.7 of \cite{Offen-Sayag2}, it has a mixed Klyachko model. Hence by Theorem \ref{strong uniqueness}, it is not ${\rm Sp}_{3}(F)$-distinguished.

If $\theta=Z([1,\nu],[\nu,\nu^{2}],[\nu,\nu^{2}])$, it is the unique irreducible submodule of $Z([\nu,\nu^{2}])\times Z([\nu,\nu^{2}])\times Z([1,\nu])$. Thus it is the unique irreducible submodule of $Z([\nu,\nu^{2}])\times Z([1,\nu],[\nu,\nu^{2}])\cong Z([\nu,\nu^{2}])\times Q([1,\nu],[\nu,\nu^{2}])$ (by Example 11.4 in \cite{Z}). By Theorem 1 of \cite{Minguez-Lapid}, this representation is irreducible and so $\theta\cong Z([\nu,\nu^{2}])\times Z([1,\nu],[\nu,\nu^{2}])$ and has a symplectic period by Lemma \ref{hereditary for Gl6}.

Thus we are done with case (d).

\paragraph{{\bf Case e) $\pi=Z([1,\nu])\times Z([\nu^{2},\nu^{3}])\times Z([\nu^{2},\nu^{3}])$ }}
$\newline$In this case, all irreducible subquotients of $\pi$ are $Z([1,\nu],[\nu^{2},\nu^{3}],[\nu^{2},\nu^{3}])$ and $Z([1,\nu^{3}],[\nu^{2},\nu^{3}])$. We now analyze both of these representations one by one.

If $\theta=Z([1,\nu^{3}],[\nu^{2},\nu^{3}])\cong Z([1,\nu^{3}])\times Z([\nu^{2},\nu^{3}])$, it has a symplectic period by Lemma \ref{hereditary for Gl6}.

If $\theta=Z([1,\nu],[\nu^{2},\nu^{3}],[\nu^{2},\nu^{3}])$, it is the unique irreducible submodule of $Z([\nu^{2},\nu^{3}])\times Z([\nu^{2},\nu^{3}])\times Z([1,\nu])$. Now, $Z([\nu^{2},\nu^{3}])\times Z([\nu^{2},\nu^{3}])\times Z([1,\nu])$ is glued from $Z([\nu^{2},\nu^{3}])\times Z([1,\nu],[\nu^{2},\nu^{3}])$ and $Z([\nu^{2},\nu^{3}])\times Z([1,\nu^{3}])$. Using Theorem 1 of \cite{Minguez-Lapid}, we get that $Z([\nu^{2},\nu^{3}])\times Z([1,\nu],[\nu^{2},\nu^{3}])\cong Z([\nu^{2},\nu^{3}])\times Q([1],[\nu,\nu^{2}],[\nu^{3}])$ is irreducible and so $\theta\cong Z([\nu^{2},\nu^{3}])\times Z([1,\nu],[\nu^{2},\nu^{3}])$. Thus $\tilde{\theta}\cong Z([\nu^{-3},\nu^{-2}])\times Z([\nu^{-1},1],[\nu^{-3},\nu^{-2}])$. Now doing a similar calculation as in case (a) by taking $\mu_{1}=Z([\nu^{-3},\nu^{-2}])$ and $\mu_{2}=Z([\nu^{-1},1],[\nu^{-3},\nu^{-2}])$, we get that $\tilde{\theta}=\mu_{1}\times\mu_{2}$ doesn't have a symplectic period. Thus even $\theta$ is not ${\rm Sp}_{3}(F)$-distinguished.

Thus we are done with case (e).

Notice that in cases (f),(g),(h) all the irreducible subquotients of $\pi$ are twists of the contragredients of the ones obtained in cases (a),(d),(e) respectively. Hence the only subquotients with a symplectic period are up to a twist, duals of the ones already obtained previously. 

\paragraph{{\bf Case 2: All the three $\pi_{i}$'s are twists of Steinberg.}}$\newline$
The representations that we are looking at in this case are of the form $$\pi=Q([\chi_{1},\chi_{1}\nu])\times Q([\chi_{1}^{'},\chi_{1}^{'}\nu])\times Q([\chi_{1}^{''},\chi_{1}^{''}\nu]).$$We first prove a lemma which is going to be repeatedly used in the analysis of representations in this case.
\begin{lemma}\label{no symplectic lemma quotient}
Let $\pi=Q([\nu^{a},\nu^{a+1}])\times Q([\nu^{b},\nu^{b+1}])\times Q([\nu^{c},\nu^{c+1}])$. Then $\pi$ has a symplectic period only if $a=b=c+1$.
\end{lemma}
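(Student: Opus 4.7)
The plan is to apply the Mackey-theoretic framework of Section \ref{Orbit structures and mackey theory}. Set $\mu_1 = Q([\nu^a, \nu^{a+1}]) \in Irr(GL_2(F))$ and $\mu_2 = Q([\nu^b, \nu^{b+1}]) \times Q([\nu^c, \nu^{c+1}]) \in Alg(GL_4(F))$, so that $\pi = \mu_1 \times \mu_2$. By the orbit analysis of Section \ref{Orbit structures and mackey theory}, $\pi|_H$ is glued from the two subquotients ${\rm ind}^H_{H_{2,r}}(\delta_{P_{2,4}}^{1/2}\mu_1 \otimes \mu_2|_{H_{2,r}})$ for $r\in\{0,2\}$, so it suffices to show that each contributes zero unless $a = b = c+1$.

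For $r = 0$, formula (\ref{general case eqn}) gives that the corresponding Hom factors as ${\rm Hom}_{Sp_1(F)}(\mu_1, \mathbb C) \otimes {\rm Hom}_{Sp_2(F)}(\mu_2, \mathbb C)$. Since $\mu_1$ is a twist of the Steinberg, it is generic, and hence not $Sp_1(F)$-distinguished by Theorem \ref{strong uniqueness}. Thus this piece vanishes, independent of $a, b, c$.

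For $r = 2$, formula (\ref{general case eqn}) yields the condition
\[
{\rm Hom}_{GL_2 \times Sp_1}\!\bigl(\nu^{-1}\cdot(\mu_1 \otimes r_{N_{(2,2)}}(\mu_2)),\ \mathbb C\bigr) \neq 0,
\]
where the $GL_2$ factor acts on $\mu_1$ standardly and on the second Levi factor of $r_{N_{(2,2)}}(\mu_2)$ via the contragredient (Gelfand--Kazhdan), while $Sp_1$ acts on the first Levi factor via the inclusion $Sp_1 \subset GL_2$. Using the Bernstein--Zelevinsky geometric lemma together with $r_{N_{(1,1)}}(Q([\nu^x,\nu^{x+1}])) = \nu^{x+1} \otimes \nu^x$, I would compute up to semi-simplification
\[
r_{N_{(2,2)}}(\mu_2) \sim Q_b \otimes Q_c\ \oplus\ Q_c \otimes Q_b\ \oplus\ (\nu^{b+1}\times \nu^{c+1}) \otimes (\nu^b \times \nu^c),
\]
where $Q_x := Q([\nu^x, \nu^{x+1}])$. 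For each component $\tau_1 \otimes \tau_2$ the Hom splits, using $\nu^{-1}\mu_1 = Q_{a-1}$, as ${\rm Hom}_{GL_2(F)}(Q_{a-1}, \tau_2) \otimes {\rm Hom}_{Sp_1(F)}(\tau_1,\mathbb C)$.

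The first two components have $\tau_1 = Q_b$ or $Q_c$, a generic Steinberg twist, hence are killed by the $Sp_1$-factor (Theorem \ref{strong uniqueness}). For the middle component, I would invoke Theorem \ref{product of sub}: the principal series $\nu^b \times \nu^c$ contains $Q_{a-1}$ as an irreducible submodule only when it is reducible with Steinberg socle, forcing $b = a$ and $c = a-1$, i.e. $a = b = c+1$; in every other configuration ${\rm Hom}_{GL_2(F)}(Q_{a-1}, \nu^b\times\nu^c) = 0$. When this condition does hold, the companion factor $\tau_1 = \nu^{a+1}\times\nu^a$ has the character $Z([\nu^a,\nu^{a+1}])$ as its unique irreducible quotient, so ${\rm Hom}_{Sp_1(F)}(\tau_1,\mathbb C) \ne 0$; thus both Hom-conditions become consistent only in this case. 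Combining all Mackey pieces, $\pi$ has a symplectic period only if $a = b = c+1$. The main delicacy is the bookkeeping in the geometric lemma, together with correctly tracking that the $GL_r$-Hom condition, via the Gelfand--Kazhdan realization of the contragredient, picks out irreducible subrepresentations (rather than quotients) of $\nu^b\times\nu^c$.
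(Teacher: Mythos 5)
Your proof is correct and takes essentially the same route as the paper: kill the closed orbit ($r=0$) by genericity of the Steinberg twist, use the geometric lemma to decompose $r_{(2,2)}(\mu_2)$ into the same three pieces, kill the two ``pure'' pieces again by genericity of the $Sp_1$-factor, and solve the resulting constraint on the middle piece. The paper states this last step tersely as ``solving the equations''; you have simply spelled it out, correctly tracking the Gelfand--Kazhdan contragredient and the direction of the embedding $Q_{a-1}\hookrightarrow\nu^b\times\nu^c$, which forces $b=a$, $c=a-1$.
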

\begin{proof}
Let $\mu_{1}=Q([\nu^{a},\nu^{a+1}])$ and $\mu_{2}=Q([\nu^{b},\nu^{b+1}])\times Q([\nu^{c},\nu^{c+1}])$. Since $\mu_{1}$ doesn't have a symplectic period (by Theorem \ref{strong uniqueness}), $${\rm Hom}_{H}\big({\rm ind}^{H}_{H_{2,0}}(\delta^{1/2}_{P_{2,4}}\mu_{1}\otimes\mu_{2}|_{H_{2,0}}),\mathbb C\big)={\rm Hom}_{{\rm Sp}_{1}(F)}\big(\mu_{1},1\big)\otimes {\rm Hom}_{{\rm Sp}_{2}(F)}\big(\mu_{2},\mathbb C\big)=0$$(by Eqn. \ref{general case eqn}). Thus the other term ${\rm Hom}_{H}\big({\rm ind}^{H}_{H_{2,2}}(\delta^{1/2}_{P_{2,4}}\mu_{1}\otimes\mu_{2}|_{H_{2,2}}),\mathbb C\big)$ has to be non-zero. Now, $r_{(2,2),(4)}(Q([\nu^{b},\nu^{b+1}])\times Q([\nu^{c},\nu^{c+1}])$ is glued from $Q([\nu^{b},\nu^{b+1}])\otimes Q([\nu^{c},\nu^{c+1}])$, $Q([\nu^{c},\nu^{c+1}])\otimes Q([\nu^{b},\nu^{b+1}])$ and $(\nu^{b+1}\times\nu^{c+1})\otimes(\nu^{b}\times\nu^{c})$ (by Lemma 2.12 of \cite{BZ2}). It can be checked easily that substituting $r_{(2,2),(4)}(\mu_{2})$ by the first two of the three representations makes this Hom space 0. Thus, $${\rm Hom}_{{\rm GL}_{2}(F)\times {\rm Sp}_{1}(F)}\big(\nu^{-1}Q([\nu^{a},\nu^{a+1}]) \otimes (\nu^{b+1}\times\nu^{c+1})\otimes(\nu^{b}\times\nu^{c}),\mathbb C\big)\neq 0.$$Solving the equations for this to be non-zero gives the lemma. 
\end{proof}
By similar arguments using Lemma \ref{irr} as in case 1 it can be easily concluded that if there is at most one link among the three segments then $\pi$ doesn't have an $H$-distinguished subquotient. Thus we look at the case where there are at least two links among the segments. Since twisting by a character doesn't matter to us, without loss of generality we can assume $\chi_{1}$ to be trivial. As earlier following are then the 8 possible cases:
$\newline a) \ Q([1,\nu])\times Q([\nu,\nu^{2}])\times Q([\nu^{3},\nu^{4}])$
$\newline b) \ Q([1,\nu])\times Q([\nu,\nu^{2}])\times Q([\nu^{2},\nu^{3}])$
$\newline c) \ Q([1,\nu])\times Q([\nu^{2},\nu^{3}])\times Q([\nu^{4},\nu^{5}])$
$\newline d) \ Q([1,\nu])\times Q([\nu,\nu^{2}])\times Q([\nu,\nu^{2}])$
$\newline e) \ Q([1,\nu])\times Q([\nu^{2},\nu^{3}])\times Q([\nu^{2},\nu^{3}])$
$\newline f) \ Q([1,\nu])\times Q([\nu^{2},\nu^{3}])\times Q([\nu^{3},\nu^{4}])$
$\newline g) \ Q([1,\nu])\times Q([1,\nu])\times Q([\nu,\nu^{2}])$
$\newline h) \ Q([1,\nu])\times Q([1,\nu])\times Q([\nu^{2},\nu^{3}])$
$\newline$
In each one of the cases listed above, we evaluate all possible irreducible subquotients (using Theorem 7.1 of \cite{Z}) and individually determine whether they have a symplectic period or not.
\paragraph{{\bf Case a) $\pi=Q([1,\nu])\times Q([\nu,\nu^{2}])\times Q([\nu^{3},\nu^{4}])$ }}
$\newline$In this case, all irreducible subquotients of $\pi$ are $Q([1,\nu],[\nu,\nu^{2}],[\nu^{3},\nu^{4}])$, $Q([1,\nu^{2}],[\nu],[\nu^{3},\nu^{4}])$, $Q([1,\nu^{4}],[\nu])$ and $Q([1,\nu],[\nu,\nu^{4}])$. We now analyze each of these representations one by one.

If $\theta=Q([1,\nu],[\nu,\nu^{2}],[\nu^{3},\nu^{4}])$, it is the unique irreducible quotient of $Q([\nu^{3},\nu^{4}])\times Q([\nu,\nu^{2}])\times Q([1,\nu])$ which doesn't have a symplectic period by Lemma \ref{no symplectic lemma quotient}. Hence $\theta$ doesn't have one.

If $\theta=Q([1,\nu^{2}],[\nu],[\nu^{3},\nu^{4}])$, it is the unique irreducible quotient of $Q([\nu^{3},\nu^{4}])\times \nu \times Q([1,\nu^{2}])$. Thus it is an irreducible quotient of $Q([\nu^{3},\nu^{4}])\times Q([1,\nu])\times Q([\nu,\nu^{2}])$ (by Theorem \ref {product of quotient}) which doesn't have a symplectic period by Lemma \ref{no symplectic lemma quotient}. Hence $\theta$ doesn't have one.

If $\theta=Q([1,\nu^{4}],[\nu])\cong Q([1,\nu^{4}])\times \nu$ (by Proposition 8.5 of \cite{Z}), it is generic and hence doesn't have a symplectic period (by Theorem \ref{strong uniqueness}).

If $\theta=Q([1,\nu],[\nu,\nu^{4}])$, it is a quotient of $Q([\nu,\nu^{2}])\times Q([\nu^{3},\nu^{4}])\times Q([1,\nu])$ which doesn't have a symplectic period by Lemma \ref{no symplectic lemma quotient}. Hence it doesn't have one too.

Thus we are done with case (a).

\paragraph{{\bf Case b) $\pi=Q([1,\nu])\times Q([\nu,\nu^{2}])\times Q([\nu^{2},\nu^{3}])$ }}
$\newline$In this case, all irreducible subquotients of $\pi$ are $Q([1,\nu],[\nu,\nu^{2}],[\nu^{2},\nu^{3}])$, $Q([1,\nu^{2}],[\nu],[\nu^{2},\nu^{3}])$, $Q([1,\nu],[\nu,\nu^{3}],[\nu^{2}])$, $Q([1,\nu^{3}],[\nu],[\nu^{2}])$, $Q([1,\nu^{2}],[\nu,\nu^{3}])$ and $Q([1,\nu^{3}],[\nu,\nu^{2}])$. We now analyze each of these representations one by one.

If $\theta=Q([1,\nu],[\nu,\nu^{2}],[\nu^{2},\nu^{3}])$, twisting $\theta$ by an appropriate power of $\nu$ makes it a unitary representation. By Theorem \ref{unitary symplectic} it doesn't have a symplectic period. 
 
If $\theta=Q([1,\nu^{2}],[\nu],[\nu^{2},\nu^{3}])$, it is the unique irreducible quotient of $Q([\nu^{2},\nu^{3}])\times \nu\times Q([1,\nu^{2}])$. Now this itself is a quotient of $Q([\nu^{2},\nu^{3}])\times Q([1,\nu])\times Q([\nu,\nu^{2}])$ which doesn't have a symplectic period by Lemma \ref{no symplectic lemma quotient}. Hence $\theta$ doesn't have one.

If $\theta=Q([1,\nu],[\nu,\nu^{3}],[\nu^{2}])$, it can be obtained by twisting the contragredient of $Q([1,\nu^{2}],[\nu],[\nu^{2},\nu^{3}])$ which as showed in the last paragraph, doesn't have a symplectic period. 

If $\theta=Q([1,\nu^{3}],[\nu],[\nu^{2}])$, it is the unique irreducible quotient of $\nu^{2}\times \nu\times Q([1,\nu^{3}])$ and hence of $Z([\nu,\nu^{2}])\times Q([1,\nu^{3}])$. Now doing a similar calculation as in case 1(a) by taking $\mu_{1}=Z([\nu,\nu^{2}])$ and $\mu_{2}=Q([1,\nu^{3}])$ we get that $Z([\nu,\nu^{2}])\times Q([1,\nu^{3}])$, and hence $\theta$, doesn't have a symplectic period.

If $\theta=Q([1,\nu^{2}],[\nu,\nu^{3}])$, it has a symplectic period by Theorem \ref{unitary symplectic}.

If $\theta=Q([1,\nu^{3}],[\nu,\nu^{2}])\cong Q([1,\nu^{3}])\times Q([\nu,\nu^{2}])$, (by Proposition 8.5 of \cite{Z}), it is generic and hence doesn't have a symplectic period (by Theorem \ref{strong uniqueness}).
 
Thus we are done with case (b).

\paragraph{{\bf Case c) $\pi=Q([1,\nu])\times Q([\nu^{2},\nu^{3}])\times Q([\nu^{4},\nu^{5}])$ }}
$\newline$In this case, all irreducible subquotients of $\pi$ are $Q([1,\nu],[\nu^{2},\nu^{3}],[\nu^{4},\nu^{5}])$, $Q([1,\nu^{3}],[\nu^{4},\nu^{5}])$, $Q([1,\nu],[\nu^{2},\nu^{5}])$ and $Q([1,\nu^{5}])$. We now analyze each of these representations one by one.

If $\theta=Q([1,\nu^{5}])$, it is generic and hence doesn't have a symplectic period.

If $\theta=Q([1,\nu],[\nu^{2},\nu^{3}],[\nu^{4},\nu^{5}])$, it is the unique irreducible quotient of $Q([\nu^{4},\nu^{5}])\times Q([\nu^{2},\nu^{3}])\times Q([1,\nu])$ which doesn't have a symplectic period by Lemma \ref{no symplectic lemma quotient}. Hence it doesn't have one. The other two cases are dealt similarly.

If $\theta=Q([1,\nu^{3}],[\nu^{4},\nu^{5}])$, it is the unique irreducible quotient of $Q([\nu^{4},\nu^{5}])\times Q([1,\nu^{3}])$. Now this itself is a quotient of $Q([\nu^{4},\nu^{5}])\times Q([1,\nu])\times Q([\nu^{2},\nu^{3}])$ which doesn't have a symplectic period by Lemma \ref{no symplectic lemma quotient}. Hence $\theta$ doesn't have one. 

If $\theta=Q([1,\nu],[\nu^{2},\nu^{5}])$, it can be obtained by twisting the contragredient of $Q([1,\nu^{3}],[\nu^{4},\nu^{5}])$ which as showed in the last paragraph, doesn't have a symplectic period. 

Thus we are done with case (c).

\paragraph{{\bf Case d) $\pi=Q([1,\nu])\times Q([\nu,\nu^{2}])\times Q([\nu,\nu^{2}])$ }}
$\newline$In this case, all irreducible subquotients of $\pi$ are $Q([1,\nu],[\nu,\nu^{2}],[\nu,\nu^{2}])$ and $Q([1,\nu^{2}],[\nu],[\nu,\nu^{2}])$. We now analyze each of these representations one by one.

If $\theta=Q([1,\nu^{2}],[\nu],[\nu,\nu^{2}])\cong Q([1,\nu^{2}])\times \nu \times Q([\nu,\nu^{2}])$, it is generic and hence doesn't have a symplectic period. 

If $\theta=Q([1,\nu],[\nu,\nu^{2}],[\nu,\nu^{2}])$, it is the unique irreducible quotient of $Q([\nu,\nu^{2}])\times Q([\nu,\nu^{2}])\times Q([1,\nu])$. Thus it is the unique irreducible quotient of $Q([\nu,\nu^{2}])\times Q([1,\nu],[\nu,\nu^{2}])\cong Q([\nu,\nu^{2}])\times Z([1,\nu],[\nu,\nu^{2}])$ (by Example 11.4 in \cite{Z}). By Theorem 1 of \cite{Minguez-Lapid}, this representation is irreducible and so $\theta\cong Q([\nu,\nu^{2}])\times Z([1,\nu],[\nu,\nu^{2}])$. So it is a quotient of $Q([\nu,\nu^{2}])\times Z([1,\nu])\times Z([\nu,\nu^{2}])$. Now doing a similar calculation as in case 1(a) by taking $\mu_{1}=Q([\nu,\nu^{2}])$ and $\mu_{2}=Z([1,\nu])\times Z([\nu,\nu^{2}])$ we get that $Q([\nu,\nu^{2}])\times Z([1,\nu])\times Z([\nu,\nu^{2}])$, and hence $\theta$, doesn't have a symplectic period.

Thus we are done with case (d).

\paragraph{{\bf Case e) $\pi=Q([1,\nu])\times Q([\nu^{2},\nu^{3}])\times Q([\nu^{2},\nu^{3}])$ }}
$\newline$In this case, all irreducible subquotients of $\pi$ are $Q([1,\nu],[\nu^{2},\nu^{3}],[\nu^{2},\nu^{3}])$ and $Q([1,\nu^{3}],[\nu^{2},\nu^{3}])$. We now analyze each of these representations one by one.

If $\theta=Q([1,\nu^{3}],[\nu^{2},\nu^{3}])\cong Q([1,\nu^{3}])\times Q([\nu^{2},\nu^{3}])$, (by Proposition 8.5 of \cite{Z}), it is generic and hence doesn't have a symplectic period (by Theorem \ref{strong uniqueness}).

If $\theta=Q([1,\nu],[\nu^{2},\nu^{3}],[\nu^{2},\nu^{3}])$, it is the unique irreducible quotient of $Q([\nu^{2},\nu^{3}])\times Q([\nu^{2},\nu^{3}])\times Q([1,\nu])$. This doesn't have a symplectic period by Lemma \ref{no symplectic lemma quotient} and so $\theta$ doesn't have one too.

Thus we are done with case (e).

Notice that as earlier, in cases (f),(g),(h) all the irreducible subquotients of $\pi$ are twists of the contragredients of the ones obtained in cases (a),(d),(e) respectively. Hence the only subquotients with a symplectic period are up to a twist, duals of the ones already obtained previously. 

Rest five cases of $\pi_{1}\times\pi_{2}\times\pi_{3}$ are dealt similarly proving Theorem \ref{main theorem}. We just mention that no new $H$-distinguished are obtained from the other cases.
\section{conjectures for the general case}\label{conjectures}
Theorem \ref{Gl4 result} and Theorem \ref {main theorem} prompt us to make certain conjectures for the general $2n$ case. In order to do so we need to set up notation. 

Define, $\mathfrak G^{'}$ to be the set of all representations of ${\rm GL}_{2n}(F)$ of the form $Z(\Delta_{1},...,\Delta_{r})$ which satisfy the following properties:
\begin{enumerate}
\item All the segments are in the same cuspidal line.
\item Each segment is of even length.
\item No two segments have the beginning element in common.
\item Condition (1) and condition (3) implies that there is a natural ordering of the segments (with respect to the beginning element). Arrange $\Delta_{1},...,\Delta_{r}$ accordingly. We require that the intersection of each segment with its neighbors is odd in length, in particular is non-empty.
\end{enumerate}

The set $\mathfrak G^{'}$ is contained in the set of ladder representations of ${\rm GL}_{m}(F)$ as defined in \cite{Minguez-Lapid}.

Further define, $\mathfrak G \subset \cup_{i\geq 1} \textrm{Irr}({\rm GL}_{2i}(F))$ to be the set of all irreducible products of elements in $\mathfrak G^{'}$ i.e.
$$\mathfrak G=\{\pi_{1}\times\cdots\times\pi_{t}|\ \pi_{1},...,\pi_{t}\in \mathfrak G^{'} \ \textrm{and the product is irreducible}\}.$$
Let us now state the conjecture in the general case using the above notation.
\begin{conjecture}\label{main conjecture}
Let $\theta$ be an irreducible representation of ${\rm GL}_{2n}(F)$ carrying a symplectic period. Then there exists $\pi_{1},...,\pi_{t}\in \mathfrak G^{'}$ such that $$\theta \cong \pi_{1}\times\cdots\times\pi_{t}.$$ In other words, $\theta \in \mathfrak G$.\end{conjecture}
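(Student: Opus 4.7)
The plan is to attack Conjecture \ref{main conjecture} by induction on $n$, using the parabolic-induction structure of irreducible representations of ${\rm GL}_{2n}(F)$ together with the Mackey-theoretic analysis developed in Section \ref{Orbit structures and mackey theory}. For the sufficiency direction—that every $\theta \in \mathfrak{G}$ is $H$-distinguished—the argument should split into two stages: first, prove by induction on the number $r$ of segments that each single-ladder $\pi \in \mathfrak{G}'$ admits a symplectic period, using Lemma \ref{existence of symplectic period lemma} as the base case $r=2$ and exploiting the exact sequences from Theorem \ref{product of sub} applied to the outermost segment, together with Lemma \ref{ext1} to propagate the non-vanishing across the long exact sequence; second, for an irreducible product $\pi_1 \times \cdots \times \pi_t$ of such ladders, combine the individual distinguishing forms by analyzing the Mackey filtration of the restriction to $H$ as in the argument leading to Proposition \ref{main prop}.

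For the necessity direction, I would first apply Lemma \ref{irr} to reduce to the case where the cuspidal support of $\theta$ lies on a single cuspidal line of some supercuspidal $\rho$. Whenever the Zelevinsky data of $\theta$ decomposes as a disjoint and unlinked union of segments across distinct cuspidal lines, Lemma \ref{irr} forces each line's contribution to sit in an even-dimensional ${\rm GL}$ and separately carry a symplectic period, so by induction on $n$ each factor already lies in $\mathfrak{G}$. This reduces the problem to showing that a single-line representation $\theta = Z(\Delta_1,\ldots,\Delta_r)$ with a symplectic period must satisfy conditions (2)--(4) of the definition of $\mathfrak{G}'$.

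Conditions (2) and (3) should emerge from a Jacquet-module analysis: equation \eqref{general case eqn} combined with the geometric lemma (Lemma 2.12 of \cite{BZ2}) constrains the exponents of $\nu$ that can appear in the restriction to the various Levi components of the stabilizers $H_{k,r}$, and the base-case obstruction—that cuspidal representations are generic and hence not $H$-distinguished by Theorem \ref{strong uniqueness}—propagates inductively to rule out odd-length segments and coincident starting points. The chief obstacle will be condition (4), the odd-intersection requirement on adjacent segments. The prototype is Lemma \ref{juxt no symplectic period lemma}, which handles the empty-intersection case; the general case where two adjacent segments of even length intersect in a positive-even-length subsegment should follow by decomposing $Z(\Delta_1,\Delta_2) \hookrightarrow Z(\Delta_2) \times Z(\Delta_1)$ via Theorem \ref{product of sub} and comparing the quotient $Z(\Delta_3) \times Z(\Delta_4)$, where $\Delta_3 = \Delta_1 \cup \Delta_2$ and $\Delta_4 = \Delta_1 \cap \Delta_2$, against the mixed Klyachko classification of Theorem \ref{strong uniqueness}.

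Extending this two-segment obstruction uniformly to ladders with more than two segments is, in my view, the principal difficulty, since one must ensure that the Mackey decomposition interacts cleanly with the full $Z(\Delta_1,\ldots,\Delta_r)$ structure rather than only with pairs. The most promising route is to invoke the irreducibility criteria for ladder representations from \cite{Minguez-Lapid} to rewrite the relevant parabolically induced representations in forms where an adjacent pair of segments can be isolated and analyzed by the two-segment argument; the inductive control on the total length of the cuspidal support, combined with Lemma \ref{ext1} to kill extension obstructions in the long exact sequences that arise, should then yield the full necessity statement. Verifying that no exotic configurations slip through this iterative scheme—particularly where several segments cluster closely on the cuspidal line so that interleaved intersections interfere—is where I expect the sharpest technical work to be required.
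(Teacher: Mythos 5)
The statement you are attempting to prove is Conjecture~\ref{main conjecture}: the paper does not prove it and explicitly leaves it open, verifying it only for ${\rm GL}_4(F)$ (Theorem~\ref{Gl4 result}), ${\rm GL}_6(F)$ (Theorem~\ref{main theorem}), and for the unitary dual (the proposition following the conjecture, which rests on Theorem~\ref{unitary symplectic}). So there is no ``paper's proof'' to compare against; what you have written is a research plan for an open problem.

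As such, your plan would need to survive on its own, and there is at least one concrete step that fails as stated. For the two-segment case with a positive even-length intersection, you propose to derive a contradiction by noting that $Z(\Delta_3)\times Z(\Delta_4)$, with $\Delta_3=\Delta_1\cup\Delta_2$ and $\Delta_4=\Delta_1\cap\Delta_2$, lies in a \emph{mixed} Klyachko model and so cannot be $H$-distinguished by Theorem~\ref{strong uniqueness}. But if $\Delta_1,\Delta_2$ have even length and $\Delta_1\cap\Delta_2$ has positive even length, then $\Delta_3$ and $\Delta_4$ are \emph{both} of even length, so $Z(\Delta_3)\times Z(\Delta_4)$ is exactly a product of the shape covered by Proposition~\ref{main prop} and is $H$-distinguished — the opposite of what you need. (The mixed-Klyachko contradiction you describe is the mechanism of Lemma~\ref{existence of symplectic period lemma}, which gives the \emph{positive} conclusion for odd intersections, not the negative one you want here.) To rule out even intersections you would instead need a multiplicity/$\mathrm{Ext}$ argument in the spirit of Lemma~\ref{juxt no symplectic period lemma}, but its Mackey computation relies crucially on the intersection being empty, so it does not transfer without substantial new work.

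Beyond that, the sufficiency half of what you sketch is really Conjecture~\ref{conjecture Hereditary property}, also open in the paper; the inductive propagation of $H$-distinction along the exact sequences of Theorem~\ref{product of sub} is plausible but has not been established for $r>2$ segments. And you yourself flag the extension of the two-segment obstruction to longer ladders as ``the principal difficulty'' and leave it unresolved. So the proposal identifies reasonable tools (Lemma~\ref{irr}, equation~\eqref{general case eqn}, geometric lemma, $\mathrm{Ext}$ vanishing) and the right reduction to a single cuspidal line, but it does not constitute a proof, and the key obstruction step it does spell out is incorrect as written.
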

The following proposition verifies the conjecture for the unitary representations. 
\begin{proposition}
Let $\theta$ be an irreducible unitary representation having a symplectic period. Then $\theta \in \mathfrak G$.
\end{proposition}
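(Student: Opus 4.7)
The plan is to apply Theorem \ref{unitary symplectic} and then recognize each Offen--Sayag building block in $\mathcal B$ as belonging to $\mathfrak G$, so that $\theta$ itself lies in $\mathfrak G$.

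First I would invoke Theorem \ref{unitary symplectic} to write $\theta \cong \tau_1 \times \cdots \times \tau_r$ with each $\tau_i \in \mathcal B$. Since $\theta$ is irreducible this outer product is automatically irreducible, so it suffices to show that each $\tau_i$ is itself a (possibly trivial) irreducible product of members of $\mathfrak G'$; concatenating the factorisations will then exhibit $\theta$ as an irreducible product of members of $\mathfrak G'$, i.e.\ as an element of $\mathfrak G$.

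For $\tau_i = U(\delta, 2m)$ with $\delta = Q([\rho, \nu^{k-1}\rho])$, I would recall the standard ``rectangular'' Zelevinsky parameter of a Speh representation: the multiset of segments attached to $U(\delta, 2m)$ consists of the $k$ segments $\Delta_i = [\nu^{(i-1)-(2m-1)/2}\rho,\; \nu^{(i-1)+(2m-1)/2}\rho]$ for $i = 1, \ldots, k$. All of these lie on the cuspidal line of $\rho$, each is of length $2m$, the beginnings $\nu^{(i-1)-(2m-1)/2}\rho$ are mutually distinct, and consecutive segments (under the natural ordering by beginning) intersect in exactly $2m-1$ points, which is odd. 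Hence all four conditions defining $\mathfrak G'$ are met, and $U(\delta, 2m) \in \mathfrak G'$.

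For a block of the form $\tau_i = \nu^{\alpha}U(\delta, 2m) \times \nu^{-\alpha}U(\delta, 2m)$ one may assume $\alpha \neq 0$, since $\alpha = 0$ (in the cases where it yields something irreducible) falls under the previous case. The conditions defining $\mathfrak G'$ are invariant under character twist, so both factors $\nu^{\pm \alpha}U(\delta, 2m)$ lie in $\mathfrak G'$. Since $2\alpha \in (-1,1) \setminus \{0\}$ is not an integer, the cuspidal lines of $\nu^{\alpha}\rho$ and $\nu^{-\alpha}\rho$ are distinct, so every segment of the first factor is disjoint from and not linked to every segment of the second. By Proposition 8.5 of \cite{Z} (as already used in Lemma \ref{irr}) the product $\tau_i$ is irreducible, exhibiting $\tau_i$ as an irreducible product of two elements of $\mathfrak G'$. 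Assembling all the pieces yields $\theta \in \mathfrak G$. The only care-points are the explicit identification of the Zelevinsky parameter of $U(\delta, 2m)$ and the observation that $\nu^{\alpha}\rho$ and $\nu^{-\alpha}\rho$ sit on distinct cuspidal lines whenever $0 < |\alpha| < 1/2$; neither presents a real obstacle, and the proof is essentially bookkeeping on top of Theorem \ref{unitary symplectic}.
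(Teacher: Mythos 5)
Your proposal is correct and follows the same strategy as the paper's proof: invoke Theorem \ref{unitary symplectic} to reduce to blocks in $\mathcal B$, then identify the Zelevinsky parameter of $U(\delta,2m)$ (via the Tadic formula) and verify the four conditions defining $\mathfrak G'$. You are actually a bit more careful than the paper: the paper's proof only records that $U(\delta,2m)\in\mathfrak G'$ and then asserts the proposition follows, leaving implicit how the complementary-series blocks $\nu^{\alpha}U(\delta,2m)\times\nu^{-\alpha}U(\delta,2m)$ fit into $\mathfrak G$; you supply the missing observation that for $0<|\alpha|<1/2$ the two twists live on distinct cuspidal lines (so the product is irreducible by Proposition 8.5 of \cite{Z}), and that the defining conditions of $\mathfrak G'$ are twist-invariant. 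This is a genuine small improvement in completeness, though not a different method.
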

\begin{proof}
By Theorem A. 10(iii) of \cite{Tadic1}, $U(\delta,t)$ where $\delta=Q([\rho\nu^{\frac{1-d}{2}},\rho\nu^{\frac{d-1}{2}}])$ is equal to $Z(\Delta_{1},...,\Delta_{d})$ where $$\Delta_{1}=[(\rho\nu^{\frac{1-d}{2}})\nu^{\frac{1-t}{2}},(\rho\nu^{\frac{1-d}{2}})\nu^{\frac{t-1}{2}}],\  \Delta_{2}=[(\rho\nu^{\frac{3-d}{2}})\nu^{\frac{1-t}{2}},(\rho\nu^{\frac{3-d}{2}})\nu^{\frac{t-1}{2}}],\cdots,$$$$\Delta_{d}=[(\rho\nu^{\frac{d-1}{2}})\nu^{\frac{1-t}{2}},(\rho\nu^{\frac{d-1}{2}})\nu^{\frac{t-1}{2}}].\hspace{3 in}$$The intersection of each segment with both its neighbors, in case they are arranged in the order of precedence, is of length $t-1$. So if $t$ is even, $U(\delta,t)\in\mathfrak G^{'}$. The proposition then follows from Theorem \ref{unitary symplectic}.
\end{proof}
The fact, $U(\delta,2m)\in\mathfrak G^{'}$ leads to an obvious question generalizing Proposition \ref{main prop}, which we state as the next conjecture.
\begin{conjecture}[$\bold{Hereditary \ Property}$]\label{conjecture Hereditary property}
Let $\theta\in\mathfrak G^{'}$. Then $\theta$ has a symplectic period. Moreover, if $\theta_{1},...,\theta_{d}\in\mathfrak G^{'}$ then $\theta_{1}\times\cdots\times\theta_{d}$ has a symplectic period.
\end{conjecture}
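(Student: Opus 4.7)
The plan is to prove both parts of Conjecture \ref{conjecture Hereditary property} by induction on segment and factor counts, mirroring the strategy of Lemma \ref{existence of symplectic period lemma}.

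For part (a), let $\theta = Z(\Delta_{1}, \ldots, \Delta_{r}) \in \mathfrak{G}'$ and induct on $r$. The cases $r=1$ and $r=2$ are handled by Proposition \ref{main prop} (since $Z(\Delta) \cong U(\delta, 2m)$ when $\Delta$ is of even length) and by Lemma \ref{existence of symplectic period lemma}, respectively. For $r \geq 3$, I would consider the short exact sequence
\begin{equation*}
0 \to \theta \to Z(\Delta_{r}) \times Z(\Delta_{r-1}) \times \cdots \times Z(\Delta_{1}) \to Q \to 0,
\end{equation*}
where the middle term carries a symplectic period by Proposition \ref{main prop}. Applying the left-exact functor ${\rm Hom}_{H}(-, \mathbb{C})$, it suffices to prove ${\rm Hom}_{H}(Q, \mathbb{C}) = 0$; the resulting injection ${\rm Hom}_{H}(\text{middle}, \mathbb{C}) \hookrightarrow {\rm Hom}_{H}(\theta, \mathbb{C})$ then forces $\theta$ to be $H$-distinguished. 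To control $Q$, the plan is to invoke the explicit description of composition factors of products of ladder representations due to Minguez and Lapid \cite{Minguez-Lapid} to enumerate the irreducible constituents of $Q$, then verify that each one contains a segment of odd length. Any such constituent embeds in a strictly mixed Klyachko model by Theorem 3.7 of \cite{Offen-Sayag2} and therefore fails to be $H$-distinguished by Theorem \ref{strong uniqueness}.

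For part (b), induct on $d$. When $d = 2$, analyze $(\theta_{1} \times \theta_{2})|_{H}$ through the Mackey filtration of Section \ref{Orbit structures and mackey theory}. The open-orbit piece ($r = 0$ in the notation of that section) contributes
\begin{equation*}
{\rm Hom}_{H}\bigl({\rm ind}^{H}_{H_{2n_{1},0}}(\delta_{P}^{1/2}\theta_{1}\otimes\theta_{2}|_{H_{2n_{1},0}}), \mathbb{C}\bigr) \cong {\rm Hom}_{{\rm Sp}_{n_{1}}(F)}(\theta_{1}, \mathbb{C}) \otimes {\rm Hom}_{{\rm Sp}_{n_{2}}(F)}(\theta_{2}, \mathbb{C}),
\end{equation*}
which is non-zero by part (a). As in Offen and Sayag's proof of Proposition \ref{main prop}, the open-orbit non-vanishing propagates to a non-trivial $H$-invariant functional on $\theta_{1} \times \theta_{2}$. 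The general $d$ then follows by induction on the number of factors.

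The main obstacle is in the inductive step of part (a): one must confirm that \emph{every} composition factor of $Q$ carries a segment of odd length. A priori, some combination of ``swap'' operations on the $\Delta_{i}$'s could produce a ladder subquotient still lying in $\mathfrak{G}'$, to which the inductive hypothesis would \emph{grant} a symplectic period and thereby break the argument. Ruling this out appears to demand a careful parity analysis within the Minguez--Lapid combinatorial model of composition series for ladder products, tracking how the odd-intersection condition interacts with the union and intersection operations. A secondary obstacle concerns part (b): rigorously verifying that the open-orbit contribution descends to a non-trivial $H$-invariant on the full $\theta_{1} \times \theta_{2}$, rather than being absorbed by ${\rm Ext}^{1}$ terms from smaller-dimensional orbit closures, will likely require vanishing statements strengthening Lemma \ref{ext1} for the stabilizers $H_{k,r}$ with $r > 0$.
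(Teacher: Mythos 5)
This statement is Conjecture \ref{conjecture Hereditary property} in the paper, i.e., it is posed as an open problem; the paper proves it only in the low-rank cases ${\rm GL}_4(F)$ and ${\rm GL}_6(F)$ (by the long case analyses of Sections \ref{analysis Gl4} and \ref{Analysis of}) and for unitary representations (via Theorem \ref{unitary symplectic}). There is therefore no proof in the paper to compare against, and your outline should be assessed as a candidate for new work rather than as a reconstruction.

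Your sketch correctly mirrors the mechanism of Lemma \ref{existence of symplectic period lemma}, but the inductive step of part (a) hinges on a claim that does not hold as stated. You propose to rule out each composition factor of $Q$ by noting it carries an odd-length segment, then invoking Theorem 3.7 of \cite{Offen-Sayag2} together with Theorem \ref{strong uniqueness}. However, Theorem 3.7 of \cite{Offen-Sayag2} assigns a Klyachko model only to representations of the form $\chi_1 U(\delta_1,m_1)\times\cdots\times\chi_t U(\delta_t,m_t)$, i.e., irreducible products of twisted Speh representations; it says nothing about a general Langlands quotient $Z(\mathfrak{b})$. In the $r=2$ case the quotient $Z(\Delta_1\cup\Delta_2)\times Z(\Delta_1\cap\Delta_2)$ happens to be exactly such a product, which is why Lemma \ref{existence of symplectic period lemma} closes. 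Once $r\geq 3$, the composition factors of $Z(\Delta_r)\times\cdots\times Z(\Delta_1)$ other than $Z(\Delta_1,\ldots,\Delta_r)$ are typically Langlands quotients whose constituent segments remain linked to one another, and for these the cited theorem is simply inapplicable. This is exactly the phenomenon that forces the paper's ${\rm GL}_6(F)$ analysis in Section \ref{Analysis of}, Case 1, to dispatch the various subquotients one at a time using a heterogeneous collection of tools --- the geometrical lemma for Jacquet module computations, the contragredient trick of Lemma \ref{duality lemma}, Lapid--Minguez irreducibility of $Z(\Delta)\times L(\Delta')$, and the ${\rm Ext}$-vanishing argument of Lemma \ref{ext1}. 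You correctly flag both the ``ladder subquotient in $\mathfrak{G}'$'' worry and the ${\rm Ext}^1$-lifting issue for part (b), but the more basic gap is that no uniform parity-plus-Klyachko argument is presently known to kill all the non-distinguished composition factors of $Q$; closing that gap, rather than merely enumerating $Q$'s constituents via \cite{Minguez-Lapid}, is the actual content the conjecture is asking for.
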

Conjecture \ref{main conjecture} and Conjecture \ref{conjecture Hereditary property} together imply that $\mathfrak G$ is precisely the set of $H$-distinguished representations of the linear groups. Thus Theorem \ref{Gl4 result} and Theorem \ref{main theorem} prove the conjectures for ${\rm GL}_{4}(F)$ and ${\rm GL}_{6}(F)$. Note that the above conjectures together imply that the property of having a symplectic period is dependent only on the combinatorial structure of the segments involved and not on the building blocks, i.e. the cuspidal representations. More precisely we state,
\begin{conjecture} \label{initial reduction conjecture}
Let $\pi\in Irr({\rm GL}_{2n}(F))$ be of the form $Z(\Delta_{1},...,\Delta_{r})$ such that all the segments are in the same cuspidal line. Let $\rho\in\ Irr({\rm GL}_{m}(F))$ be an element of the line. Let $\Delta^{'}_{i}$ be the segment obtained from $\Delta_{i}$ by replacing $\rho$ with the trivial representation of $F^{\times}$ and $\pi^{'}$ be the representation $Z(\Delta_{1}^{'},...,\Delta_{r}^{'})$ of ${\rm GL}_{2n/m}(F)$. Then,\\
1) If $2n/m$ is even then, $\pi$ has a symplectic period iff $\pi^{'}$ has a symplectic period.\\
2) If $2n/m$ is odd then $\pi$ doesn't have a symplectic period.
\end{conjecture}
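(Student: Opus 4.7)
The plan is to deduce this conjecture from Conjecture \ref{main conjecture} and Conjecture \ref{conjecture Hereditary property}, which together assert that an irreducible $\theta \in Irr({\rm GL}_{2n}(F))$ has a symplectic period if and only if $\theta \in \mathfrak{G}$. Granting this equivalence, the conjecture reduces to two purely combinatorial observations about segments: first, that membership in $\mathfrak{G}$ is invariant under the substitution $\rho \mapsto 1_1$; second, that a representation on a single cuspidal line of $\rho \in Irr({\rm GL}_m(F))$ can lie in $\mathfrak{G}$ only if $2n/m$ is even.

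For part (2), suppose $\pi$ has a symplectic period. By the assumed equivalence $\pi \cong \pi_1 \times \cdots \times \pi_t$ with each $\pi_j \in \mathfrak{G}'$. Because $\pi$ is on the cuspidal line of $\rho$, every segment in its Zelevinsky data lies on that line, and hence every $\pi_j$ lives on the line of $\rho$ as well. By the definition of $\mathfrak{G}'$ each $\pi_j$ is built from segments of even length, so $\pi_j \in Irr({\rm GL}_{2 m k_j}(F))$ for some positive integer $k_j$. Summing gives $2n = 2m(k_1 + \cdots + k_t)$, so $2n/m$ is automatically even, which immediately yields the second assertion.

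For part (1), I would verify that the substitution $[\rho\nu^a,\rho\nu^b] \mapsto [\nu^a,\nu^b]$ preserves every combinatorial invariant entering the definition of $\mathfrak{G}'$: the length of each segment, the common-cuspidal-line condition, the ``no common beginning'' requirement, the precedence ordering, the lengths of intersections of neighboring segments, and the linkedness of any pair of segments all depend only on the shift exponents, not on the specific cuspidal block $\rho$. Hence $\pi \in \mathfrak{G}'$ iff $\pi' \in \mathfrak{G}'$. Since the irreducibility of a product $\pi_1 \times \cdots \times \pi_t$ is controlled entirely by linkedness of constituent segments (Theorem \ref{product of sub} together with Proposition 8.5 of \cite{Z}), any valid expression of $\pi$ as an irreducible product of elements of $\mathfrak{G}'$ transfers under the substitution to a valid expression for $\pi'$, and conversely. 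Combined with the characterization of distinguished representations as $\mathfrak{G}$, this shows $\pi$ has a symplectic period iff $\pi'$ does.

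The main obstacle is that Conjectures \ref{main conjecture} and \ref{conjecture Hereditary property} are themselves open in the general case, so an unconditional proof of Conjecture \ref{initial reduction conjecture} must proceed independently. The natural route would be a direct Mackey-theoretic analysis along the lines of Section \ref{Orbit structures and mackey theory}, combined with the Bernstein--Zelevinsky geometrical lemma (Lemma 2.12 of \cite{BZ2}) and the Gelfand--Kazhdan theorem. Equation (\ref{general case eqn}) already indicates that every contribution to ${\rm Hom}_{H}(\pi,\mathbb{C})$ factors through a pairing of cuspidal constituents of Jacquet modules twisted by $\nu^{-1}$; these matching conditions depend only on shift exponents, which strongly suggests the stated invariance under $\rho \mapsto 1_1$. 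However, turning this observation into a clean induction on the number of segments $r$, without assuming Conjecture \ref{main conjecture} a priori, is the serious difficulty.
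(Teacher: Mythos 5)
This statement is labeled as a conjecture, not a theorem, and the paper gives no proof of it; it only remarks (without argument) that it is implied by Conjecture~\ref{main conjecture} together with Conjecture~\ref{conjecture Hereditary property}, which is precisely what you flesh out. Your conditional deduction is sound: for part~(2), the multiset of segments of an irreducible product of Zelevinsky representations is the union of the multisets of the factors, so if $\pi=\pi_{1}\times\cdots\times\pi_{t}$ with $\pi_{j}\in\mathfrak G^{'}$ and $\pi$ lives on the line of $\rho\in Irr({\rm GL}_{m}(F))$, each $\pi_{j}$ does too, and the even-length condition forces $\pi_{j}\in Irr({\rm GL}_{2mk_{j}}(F))$, so $2n/m=2\sum k_{j}$ is even; for part~(1), all the defining conditions of $\mathfrak G^{'}$ and the irreducibility of products of such representations depend only on the shift exponents, hence are preserved under $\rho\mapsto 1_{1}$. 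The one small caveat is that your citation of Theorem~\ref{product of sub} and Proposition~8.5 of Zelevinsky addresses irreducibility only of products of single segments $Z(\Delta)$; for products of genuine $\mathfrak G^{'}$ elements (ladders with several segments each) one needs a criterion of the kind in \cite{Minguez-Lapid}, but that criterion is likewise purely combinatorial, so your conclusion stands. You correctly note that since the prerequisite conjectures remain open, this does not constitute an unconditional proof, which is consistent with the paper leaving the statement as a conjecture.
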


\bibliographystyle{plain}

\begin{thebibliography}{99}

\bibitem{Minguez-Lapid} I. Badulescu, E. Lapid, A. Minguez,
{\em L'induite Parabolique $Z(\Delta)\times L(\Delta)$},
Preprint.

\bibitem{BZ1} I. N. Bernstein and A. V. Zelevinsky, 
{\em Representations of the group GL(n,F), where F is a local non-Archimedean field (in Russian)}, 
Russian Math. Surveys, vol.31, no 3 (1976): 1-68.

\bibitem{BZ2} I. N. Bernstein and A. V. Zelevinsky, 
{\em Induced representations of reductive p-adic groups I}, 
Annales Scientifiques De L'Ecole Normale Superieure, 10 no.4 (1977): 441-472. 

\bibitem{Cas} W. Casselman,
{\em Notes on $p$-adic representation theory}, 
Unpublished, (1995): 1-99.

\bibitem{Heumos-Rallis} M. J. Heumos and S. Rallis 
{\em Symplectic-Whittaker models for Gl(n)},
 Pacific Journal of Mathematics 146, no. 2 (1990): 247-279.

\bibitem{Klyachko} A. A. Klyachko, 
{\em Models for complex representations of the groups GL(n,q)},
Matematicheski Sbornik. Novaya Seriya, 48, no.2 (1984): 365-378. 

\bibitem{Offen} O. Offen, 
{\em Distinguished residual spectrum},
Duke Mathematical Journal 134, no. 2 (2006): 313-357.

\bibitem{Offen-Sayag1} O. Offen and E. Sayag,
{\em On unitary representations of GL2n distinguished by the symplectic group},
 Journal of Numbar Theory, 125 (2007): 344-355.

\bibitem{Offen-Sayag2} O. Offen and E. Sayag,
{\em Global Mixed Periods and Local Klyachko Models for the General Linear Group},
 International Mathematics Research Notices, vol. 2007, Article ID rnm 136, 25 pages. 

\bibitem{Offen-Sayag3} O. Offen and E. Sayag, 
{\em Uniqueness and disjointness of Klyachko models},
 Journal of Functional Analysis, no. 11, (2008): 2846-2865. 

\bibitem{Prasad1} D. Prasad,      
{\em Trilinear forms for representations of $Gl_{2}$ and local $\epsilon$-factors.},
 Compositio Math., 75(1), (1990): 1-46.

\bibitem{Prasad2} D. Prasad,
{\em On the decomposition of representation of $Gl_{3}$ restricted to $Gl_{2}$ over a p-adic field},
 Duke Mathematical Journal, vol. 69(1), (1993): 167-177.

\bibitem{Tadic1} M. Tadic,
{\em Classification of unitary representations in irreducible representations of general linear group (non-archimedean case)},
 Annales Scientifiques De L'Ecole Normale Superieure 19, no. 3 (1986): 335-382. 

\bibitem{Z} A. V. Zelevinsky, 
{\em Induced representations of reductive p-adic groups II. On irreducible representations of GL(n)},
 Annales Scientifiques De L'Ecole Normale Superieure(4)13, no.2 (1980): 165-210.

\end{thebibliography}

\end{document}